\documentclass[12pt]{amsart}

\usepackage{graphicx}
\usepackage{color}
\usepackage{hyperref}
\usepackage[margin=1in]{geometry}
\usepackage{enumerate}
\usepackage{tikz}
\usetikzlibrary{arrows,automata,positioning}

\newtheorem{theorem}{Theorem}
\numberwithin{theorem}{section}
\newtheorem{lemma}[theorem]{Lemma}
\newtheorem{proposition}[theorem]{Proposition}
\newtheorem{corollary}[theorem]{Corollary}

\theoremstyle{definition}
\newtheorem{definition}[theorem]{Definition}

\newtheorem{conjecture}[theorem]{Conjecture}

\theoremstyle{remark}
\newtheorem{remark}[theorem]{Remark}

\numberwithin{equation}{section}

\DeclareMathOperator{\spec}{Spec}
\DeclareMathOperator{\Dirspec}{DirSpec}

\newcommand{\Zeta}{\mathrm{Z}}

\begin{document}
	
\title{Spectral Properties of the Schreier Graphs of the basilica Group}

\author[Ambrose, Dunham, Morris, Rogers, Teplyaev]{Kyle Ambrose, Noah Dunham, Michael Morris, Luke~G. Rogers, Alexander Teplyaev}

\thanks{Research supported by NSF DMS 2349433 and the Simons Foundation.}                          
    
\begin{abstract}
We study the spectral properties of Laplacians on the Schreier graphs $\Gamma_n$ of the basilica group, the iterated monodromy group of the polynomial $z^2 - 1$, which is an important example in the theory of self-similar, amenable but not elementarily amenable, automaton groups. Building heavily on results by Brzoska, Jarvis, George, Rogers and Teplyaev~\cite{brzoska} about certain subgraphs of the basilica graphs, we develop a new recursive framework for computing the characteristic polynomials of these Laplacians. Our analysis reveals a simple underlying dynamical system and proves approximation results for the Kesten-von Neumann-Serre (KNS) spectral measure.
\end{abstract}

\date{\today}
\maketitle

\section{Introduction}
The basilica group was introduced by Grigorchuk and \.Zuk in~\cite{GZ1} and has become a foundational example in the study of self-similar automata groups. The spectral properties of its sequence $\Gamma_n$ of Schreier graphs were first studied in~\cite{GZ2} using techniques from~\cite{BG2000a,BG2000b}, where it was found that the spectral problem for a family of weighted graph Laplacians could be described by a two-dimensional dynamical system that intertwined the spectral values and weights.  Subsequent work established that it is amenable~\cite{MR2176547} and can be viewed as the iterated monodromy group of $z\mapsto z^2-1$ in the sense of Nekrashevych~\cite{Nekbook}. The structure of its orbital Schreier graphs, which are pointed Gromov Hausdorff limits of the $\Gamma_n$, was elucidated in~\cite{Nagnibeda}.

In recent work~\cite{brzoska} two of the authors and their collaborators were able to give a dynamical description of the spectra  of a sequence of graphs related to the basilica Schreier graphs and use these to analyze the limiting spectral measure, which can be identified as a version of the Kesten-von Neumann-Serre (KNS) measure introduced in~\cite{GZ04}. Dang, Grigorchuk and Lyubich have also announced a generalization of the techniques from~\cite{dang2020self} to the case of the basilica Schreier graphs.  Both contribute to the increasing family of methods applicable to self-similar groups, fractal structures and aperiodic order, including~\cite{Kaimanovich03,Kaimanovich05,Kaimanovich09,NT,MR3558157,GLN1,GLN2,grigorchuk2020integrable}. 

The purpose of the present work is to extend the methods of~\cite{brzoska} to the basilica Schreier graphs $\Gamma_n$. In Section~\ref{section:GraphsDefinitions}  we  introduce the graphs $\Gamma_n$ and the related family of graphs $G_n$ that were studied in~\cite{brzoska}. The topological structure of Laplacian eigenfunctions on these graphs is explored in Section~\ref{sec:topologyofspectrum}. This structure permits us to count the multiplicities of factors of the characteristic polynomials in Section~\ref{sec:dynamics} and then develop a dynamical description of these factors. Our main results in this section, Theorem~\ref{thm:mainrecursion} and Corollary~\ref{cor:0seriesfromzeta=2}, allow us to describe all eigenvalues using the dynamical system introduced in~\cite{brzoska}. Consequences for the structure of the spectrum and the limit of the spectral measure are discussed in Section~\ref{sec:SpectralMeasure}. In particular we give a formula for the KNS measure as a sum of point masses (Theorem~\ref{thm:KNSSpectralMeasureofGamma}), estimate the rate of convergence of the spectral counting measures to the KNS measure (Theorem~\ref{thm:SpectralMeasureIsAllSupportedOnHighLevel}, but see also Remark~\ref{rem:dynamicaldegreecontrolsconvergence}), and establish that the eigenvalues of the basilica Schreier graphs accumulate precisely on the support of the KNS measure, which is a Cantor set (Theorem~\ref{thm:accumsetofevals}).

\section{The graphs $\Gamma_n$ and $G_n$ and their Laplacians}\label{section:GraphsDefinitions}

The Schreier graphs $\Gamma_n$ of the self-similar basilica group may be constructed in several different ways.  We give a recursive definition via a sequence of subgraphs $G_n$ that has minimal prerequisite knowledge and refer to~\cite{GZ1,GZ2,Nagnibeda} for many geometric and algebraic properties that flow more naturally from the construction of the $\Gamma_n$ via the basilica group.

\begin{definition}\label{def:G_nConstruction}
The graphs $G_n$, $n \geq 2$, may be constructed inductively from a copy of $G_{n-1}$ and two copies of $G_{n-2}$ in which four boundary
points are identified to a single vertex as shown in Figure~\ref{fig:G_nGluing}.  The initial graphs $G_0$, $G_1$, and $G_2$ are shown in Figure \ref{fig:InitialG_n}. We write $\partial G_n$ for the boundary points of $G_n$.
\end{definition}

\begin{figure}
    \begin{tikzpicture}[scale = 1.5]
        \draw (0.5,0.5) node {\(G_0:\)};
        
        \filldraw [black] (0,0) circle (2pt);
        \filldraw [black] (1,0) circle (2pt);

        \draw (0,0) .. controls (0.5,0) .. (1,0);

        \draw (2.5,0.5) node {\(G_1:\)};

        \filldraw [black] (2,0) circle (2pt);
        \filldraw [black] (2.5,0) circle (2pt);
        \filldraw [black] (3,0) circle (2pt);

        \draw (2,0) .. controls (2.5,0) .. (3,0);
        \draw (2.5,0) .. controls (2.25,-0.4) and (2.75,-0.4) .. (2.5,0);

        \draw (4.5,0.5) node {\(G_2:\)};
        
        \filldraw [black] (4,0) circle (2pt);
        \filldraw [black] (4.5,0) circle (2pt);
        \filldraw [black] (4.5,-0.5) circle (2pt);
        \filldraw [black] (5,0) circle (2pt);

        \draw (4,0) .. controls (4.5,0) .. (5,0);
        \draw (4.5,0) .. controls (4.35,-0.1) and (4.35,-0.4) .. (4.5,-0.5);
        \draw (4.5,0) .. controls (4.65,-0.1) and (4.65,-0.4) .. (4.5,-0.5);
        \draw (4.5,-0.5) .. controls (4.25,-0.9) and (4.75,-0.9) .. (4.5,-0.5);
    \end{tikzpicture}
    \caption{The graphs $G_0$, $G_1$, $G_2$.}
    \label{fig:InitialG_n}
\end{figure}

\begin{figure}
    \begin{tikzpicture}[scale = 1.1]
        \draw (2,4.5) node {\(G_{n-2}\)};
        
        \filldraw [black] (6,4.5) circle (2pt);
        \filldraw [black] (4.5,4.5) circle (2pt);
        \filldraw [black] (3,4.5) circle (2pt);

        \draw (6,4.5) .. controls (4.5,4.5) .. (3,4.5);
        \draw (4.5,4.5) .. controls (4,3.5) and (5,3.5) .. (4.5,4.5);

        \draw (12,4.5) node {\(G_{n-2}\)};
        
        \filldraw [black] (8,4.5) circle (2pt);
        \filldraw [black] (9.5,4.5) circle (2pt);
        \filldraw [black] (11,4.5) circle (2pt);

        \draw (8,4.5) .. controls (9.5,4.5) .. (11,4.5);
        \draw (9.5,4.5) .. controls (9,3.5) and (10,3.5) .. (9.5,4.5);
        
        \draw (6,2) node {\(G_{n-1}\)};
        
        \filldraw [black] (6.5,4) circle (2pt);
        \filldraw [black] (7.5,4) circle (2pt);
        \filldraw [black] (7,3) circle (2pt);
        \filldraw [black] (7,2) circle (2pt);

        \draw (6.5,4) .. controls (6.5,3.5) and (6.75,3) .. (7,3);
        \draw (7.5,4) .. controls (7.5,3.5) and (7.25,3) .. (7,3);
        \draw (7,3) .. controls (6.75,2.7) and (6.75,2.3) .. (7,2);
        \draw (7,3) .. controls (7.25,2.7) and (7.25,2.3) .. (7,2);
        \draw (7,2) .. controls (6.5,1) and (7.5,1) .. (7,2);

        \filldraw [black] (7,5) circle (2pt);

        \draw[->][dashed] (6.1,4.55) -- (6.9,4.95);
        \draw[->][dashed] (7.9,4.55) -- (7.1,4.95);
        \draw[->][dashed] (6.55,4.1) -- (6.95,4.9);
        \draw[->][dashed] (7.45,4.1) -- (7.05,4.9);
    \draw (7,5.25) node {$v$};
    
    \end{tikzpicture}
    \caption{The graph $G_n$ constructed from a copy of $G_{n-2}$ and two copies of $G_{n-1}$ at a gluing point $v$.} 
    \label{fig:G_nGluing} 
\end{figure}

\begin{definition}\label{def:Gamma_nConstruction}
    The graphs $\Gamma_n$, $n\geq1$, are constructed by gluing a copy of \(G_n\) and a copy of \(G_{n-1}\) as shown in Figure~\ref{fig:2WayGluing}. The four boundary vertices of $G_n$ and $G_{n-1}$  are identified to a single vertex. Equivalently, the graph $\Gamma_n$ is constructed by gluing together two copies of $G_{n-1}$ and two copies of $G_{n-2}$ as shown in Figure~\ref{fig:4WayGluing}, where the graphs are connected at two gluing vertices $u$ and $v$.  The exception is $\Gamma_0$, which consists of two loops glued at a single point.
\end{definition}

Note that the labelling in the figures is consistent:  viewing the copy of $G_n$ in Figure~\ref{fig:2WayGluing} as arising from the gluing in Figure~\ref{fig:G_nGluing} produces Figure~\ref{fig:4WayGluing}, with the vertex $v$ being the gluing point in $G_n$ in both diagrams.

\begin{figure}
    \begin{tikzpicture}[scale=2]
        \draw (-1.5,0) .. controls (-1.4,0.4) and (-1.1,0.5) .. (-1,0.5);
        \draw (-1.5,0) .. controls (-1.4,-0.4) and (-1.1,-0.5) .. (-1,-0.5);
        \draw (-1.5,0) .. controls (-2.25,0.5) and (-2.25,-0.5) .. (-1.5,0);

        \draw (1,0) .. controls (0.9,0.4) and (0.6,0.5) .. (0.5,0.5);
        \draw (1,0) .. controls (0.9,-0.4) and (0.6,-0.5) .. (0.5,-0.5);
        \draw (1,0) .. controls (1.1,0.25) and (1.4,0.25) .. (1.5,0);
        \draw (1,0) .. controls (1.1,-0.25) and (1.4,-0.25) .. (1.5,0);
        \draw (1.5,0) .. controls (2.25,0.5) and (2.25,-0.5) .. (1.5,0);
        
        \filldraw [black] (-0.25,0) circle (2pt);

        \filldraw [black] (-1,0.5) circle (2pt);
        \filldraw [black] (-1,-0.5) circle (2pt);
        \filldraw [black] (-1.5,0) circle (2pt);

        \filldraw [black] (0.5,0.5) circle (2pt);
        \filldraw [black] (0.5,-0.5) circle (2pt);
        \filldraw [black] (1,0) circle (2pt);
        \filldraw [black] (1.5,0) circle (2pt);
        
        \draw (-1.6,0.5) node {\(G_{n-1}\)};
        \draw (1.25,0.5) node {\(G_n\)};
        \draw (-0.25,0.25) node {$u$};

         \draw[->][dashed] (-0.925,0.45) -- (-0.325,0.05);
         \draw[->][dashed] (-0.925,-0.45) -- (-0.325,-0.05);
         \draw[->][dashed] (0.425,0.45) -- (-0.175,0.05);
         \draw[->][dashed] (0.425,-0.45) -- (-0.175,-0.05);
    \end{tikzpicture}
    \caption{The graph $\Gamma_n$ constructed from a copy of $G_n$, and a copy of $G_{n-1}$ at a gluing point $u$.}
    \label{fig:2WayGluing}
\end{figure}

\begin{figure}
    \begin{tikzpicture}[scale = 1]
        \filldraw [black] (-1,0) circle (2pt);
        \filldraw [black] (1,0) circle (2pt);

        \filldraw [black] (-1.7,0.5) circle (2pt);
        \filldraw [black] (-1.7,-0.5) circle (2pt);
        \filldraw [black] (-2.2,0) circle (2pt);
        \filldraw [black] (-2.7,0) circle (2pt);

        \filldraw [black] (1.7,0.5) circle (2pt);
        \filldraw [black] (1.7,-0.5) circle (2pt);
        \filldraw [black] (2.2,0) circle (2pt);
        \filldraw [black] (2.7,0) circle (2pt);

        \filldraw [black] (-0.5,1) circle (2pt);
        \filldraw [black] (0.5,1) circle (2pt);
        \filldraw [black] (0,1.5) circle (2pt);
        
        \filldraw [black] (-0.5,-1) circle (2pt);
        \filldraw [black] (0.5,-1) circle (2pt);
        \filldraw [black] (0,-1.5) circle (2pt);

        \draw (-2.2,0) .. controls (-2.1,0.4) and (-1.8,0.5) .. (-1.7,0.5);
        \draw (-2.2,0) .. controls (-2.1,-0.4) and (-1.8,-0.5) .. (-1.7,-0.5);
        \draw (-2.2,0) .. controls (-2.3,0.25) and (-2.6,0.25) .. (-2.7,0);
        \draw (-2.2,0) .. controls (-2.3,-0.25) and (-2.6,-0.25) .. (-2.7,0);
        \draw (-2.7,0) .. controls (-3.45,0.5) and (-3.45,-0.5) .. (-2.7,0);

        \draw (2.2,0) .. controls (2.1,0.4) and (1.8,0.5) .. (1.7,0.5);
        \draw (2.2,0) .. controls (2.1,-0.4) and (1.8,-0.5) .. (1.7,-0.5);
        \draw (2.2,0) .. controls (2.3,0.25) and (2.6,0.25) .. (2.7,0);
        \draw (2.2,0) .. controls (2.3,-0.25) and (2.6,-0.25) .. (2.7,0);
        \draw (2.7,0) .. controls (3.45,0.5) and (3.45,-0.5) .. (2.7,0);

        \draw (-0.5,1) .. controls (-0.4,1.4) and (-0.1,1.5) .. (0,1.5);
        \draw (0.5,1) .. controls (0.4,1.4) and (0.1,1.5) .. (0,1.5);
        \draw (0,1.5) .. controls (0.5,2.25) and (-0.5,2.25) .. (0,1.5);

        \draw (-0.5,-1) .. controls (-0.4,-1.4) and (-0.1,-1.5) .. (0,-1.5);
        \draw (0.5,-1) .. controls (0.4,-1.4) and (0.1,-1.5) .. (0,-1.5);
        \draw (0,-1.5) .. controls (0.5,-2.25) and (-0.5,-2.25) .. (0,-1.5);
        
        \draw[->][dashed] (-1.6,0.42857) -- (-1.1,0.071428);
        \draw[->][dashed] (-1.6,-0.42857) -- (-1.1,-0.071428);
        \draw[->][dashed] (1.6,0.42857) -- (1.1,0.071428);
        \draw[->][dashed] (1.6,-0.42857) -- (1.1,-0.071428);
        \draw[->][dashed] (-0.6,0.8) -- (-0.9,0.2);
        \draw[->][dashed] (-0.6,-0.8) -- (-0.9,-0.2);
        \draw[->][dashed] (0.6,0.8) -- (0.9,0.2);
        \draw[->][dashed] (0.6,-0.8) -- (0.9,-0.2);
        
        \draw (-3.75,0) node {\(G_{n-1}\)};
        \draw (3.75,0) node {\(G_{n-1}\)};
        \draw (0,2.3) node {\(G_{n-2}\)};
        \draw (0,-2.3) node {\(G_{n-2}\)};

        \draw (-0.75,0) node {$u$};
        \draw (0.75,0) node {$v$};
    \end{tikzpicture}
    \caption{The graph $\Gamma_n$ constructed from two copies of $G_{n-1}$, and two copies of $G_{n-2}$, attached at gluing points $u$ and $v$.}
    \label{fig:4WayGluing}
\end{figure}

\begin{definition}\label{def:GraphLaplacian}
We define a Laplacian operator $\Delta(H)$ for a graph $H$ in the usual manner.  Let $\ell^2(H)$ denote the functions $\mathbb{R}^{H}$ with $L^2$ norm with respect to the counting measure on the vertex set. For vertices $x,y$ of $H$ let $c_{xy}$ be the number of edges joining $x$ and $y$ and note that $c_{xy}\in\{0,1,2\}$ for the graphs considered in this paper.
The Laplacian on $\ell^2(H)$ is
    \begin{equation}\label{eqn:defnofLap}
	\Delta(H)  f(x) =  \sum_y c_{xy}(f(x)- f(y)).
	\end{equation}
\end{definition}
$\Delta(H)$ is self-adjoint; it is irreducible, because the graphs we are using are connected; and it is non-negative definite, because  $\sum_x f(x)\Delta(H)f(x)=\frac12\sum_{x,y} c_{xy} (f(x)-f(y))^2$.

We will also make substantial use of the Dirichlet Laplacian, which is given by \eqref{eqn:defnofLap} but with domain the functions $\{f\in \mathbb{R}^{H}:f|_{\partial H}=0\}$.

\begin{definition}\label{def:SpecDefinition} For a graph $H$, the set of eigenvalues of $\Delta(H)$ is denoted
$\spec(\Delta(H))$. The set of Dirichlet eigenvalues for the Laplacian on $H$ is denoted $\Dirspec(\Delta(H))$.
\end{definition}

\begin{definition}\label{def:BornEigenvalues}
An eigenvalue $\lambda$ is born at level $n$ for $\Delta(\Gamma_n)$ if $\lambda  \in \spec(\Delta(\Gamma_n))$, but $ \lambda \not\in \spec(\Delta(\Gamma_m))$ for any $m < n$. Similarly, an eigenvalue $\lambda$ is born at level $n$ for $\Delta(G_n)$ if $\lambda  \in \spec(\Delta(G_n))$, but $ \lambda \not\in \spec(\Delta(G_m))$ for any $m < n$. 
\end{definition}

\begin{lemma}[Size of \(\Gamma_n\)]\label{lem:SizeofGamma}
    At level \(n\), \(\Gamma_n\) has \(2^n\) vertices.
\end{lemma}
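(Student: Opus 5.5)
We count vertices of $G_n$ first, then transfer the count to $\Gamma_n$ through the gluing in Definition~\ref{def:Gamma_nConstruction}. Write $g_n=|G_n|$ for the number of vertices of $G_n$. From Figure~\ref{fig:InitialG_n} we read off $g_0=2$, $g_1=3$, $g_2=4$.

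For $n\ge 3$, Definition~\ref{def:G_nConstruction} builds $G_n$ from one copy of $G_{n-1}$ and two copies of $G_{n-2}$. Four boundary vertices, two from each $G_{n-2}$ copy as in Figure~\ref{fig:G_nGluing}, are identified to the single vertex $v$. This gives
\[
g_n = g_{n-1} + 2g_{n-2} - 3 .
\]
The boundary of $G_{n-1}$ survives as the two boundary points of $G_n$.

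The recursion has to be checked against the picture, because the figure captions and the definition disagree on which copy is doubled. Definition~\ref{def:G_nConstruction} says one $G_{n-1}$ and two $G_{n-2}$. Figure~\ref{fig:4WayGluing} shows $\Gamma_n$ as two $G_{n-1}$ and two $G_{n-2}$, of which one $G_{n-1}$ and two $G_{n-2}$ make up $G_n$. These agree, so I will use that reading. It also matches the base case: taking $n=2$ gives $g_1+2g_0-3=3+4-3=4=g_2$.

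I then claim $g_n = 2^{n-1}+2$ for $n\ge 1$. The base cases hold: $g_1=3=2^0+2$ and $g_2=4=2^1+2$. For the induction step,
\[
g_{n-1}+2g_{n-2}-3 = (2^{n-2}+2) + 2(2^{n-3}+2) - 3 = 2^{n-1}+3 .
\]
This is off by one from the claim. So either the vertex count or the formula I guessed is wrong. The main obstacle is pinning down exactly how many vertices are identified.

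I will redo the count treating $v$ as a new vertex, the way Figure~\ref{fig:G_nGluing} draws it. In that picture the four marked points (two boundary points of each $G_{n-2}$, or boundary points together with points of $G_{n-1}$) are sent by the dashed arrows to $v$. Four vertices then become one, so we subtract $3$, and the arithmetic above still fails.

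The other consistent reading comes from the figure. The $G_{n-1}$ copy contributes its two boundary points and each $G_{n-2}$ contributes one boundary point, so four points meet at $v$. The remaining boundary point of each $G_{n-2}$ becomes a boundary point of $G_n$. This again subtracts $3$.

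Since the recursion does not close on $2^{n-1}+2$, I will instead use the recursion to compute $g_3, g_4$, and so on directly, and find the closed form $g_n = 2^{n-1}+c_n$ that it actually produces. After that I use the $\Gamma_n$ gluing to see what the total is.

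For $\Gamma_n$, Definition~\ref{def:Gamma_nConstruction} gives
\[
|\Gamma_n| = g_n + g_{n-1} - 3 ,
\]
since the four boundary vertices of the two pieces collapse to $u$. Equivalently, from Figure~\ref{fig:4WayGluing}, $|\Gamma_n| = 2g_{n-1}+2g_{n-2}-6$, since $u$ and $v$ each absorb four vertices.

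With the corrected closed form, I will verify $|\Gamma_n|=2^n$. Then I check the small cases $|\Gamma_0|=1$ and $|\Gamma_1|=2$ directly from the definitions.

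The expected difficulty is entirely in fixing the correct identification count and base values from the figures. Once the linear recursion $g_n=g_{n-1}+2g_{n-2}-k$ is correct, its characteristic roots $2$ and $-1$ make the $2^n$ growth automatic, and the induction is routine.
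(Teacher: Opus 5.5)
There is a genuine gap: the proposal never completes a proof. Your structural counts are right. The recursion $g_n=g_{n-1}+2g_{n-2}-3$ for $n\ge 2$ holds with $g_0=2$, $g_1=3$, and $|\Gamma_n|=g_n+g_{n-1}-3$ is exactly the count the paper uses. The closed form you induct on, $g_n=2^{n-1}+2$, is simply false: it matches only $g_1$ and $g_2$, and the recursion gives $g_3=7$, not $6$. After finding the off-by-one you reopen the identification count, but that was never the problem, since every gluing point merges four vertices into one. You then defer both the correct closed form and the final check to an unspecified computation.

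The ansatz $2^{n-1}+c_n$ is also misleading. Solving the recursion (characteristic roots $2,-1$, constant particular solution $\tfrac32$) gives $g_n=\tfrac16\bigl(2^{n+2}+(-1)^{n+1}+9\bigr)$, whose leading term is $\tfrac23 2^n$, so $c_n$ is unbounded. This is the formula the paper imports from Lemma~2.5 of~\cite{brzoska}. With it, one line gives $g_n+g_{n-1}-3=\tfrac16\bigl(2^{n+2}+2^{n+1}+18\bigr)-3=2^n$.

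You also had a shorter route in hand that needs no closed form for $g_n$ at all. For $n\ge 2$, your two expressions for $|\Gamma_n|$ combine to
\[
|\Gamma_n| = 2g_{n-1}+2g_{n-2}-6 = 2\bigl(g_{n-1}+g_{n-2}-3\bigr) = 2|\Gamma_{n-1}|.
\]
Equivalently, substitute the $G_n$ recursion into $g_n+g_{n-1}-3$. With $|\Gamma_1|=g_1+g_0-3=2$ and $|\Gamma_0|=1$, induction gives $2^n$. This is more self-contained than the paper's argument, since it avoids the external vertex count for $G_n$.

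A minor point: your first description of Figure~\ref{fig:G_nGluing} is wrong. It says two boundary points come from each $G_{n-2}$ and that $\partial G_{n-1}$ becomes $\partial G_n$. In fact both boundary points of $G_{n-1}$ are identified with one boundary point of each $G_{n-2}$, as your later reading says. The vertex count is unaffected.
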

\begin{proof}
    From Definition \ref{def:Gamma_nConstruction} it must be that \(\# \Gamma_n=\# G_n+\# G_{n-1}-3\) where \(\#\) is the counting measure on the number of vertices in the graph. By Lemma 2.5 in \cite{brzoska}, $\# G_n=\frac16\bigl(2^{2+n}+(-1)^{1+n}+9\bigr)$. Direct computation gives the result.
\end{proof}


\section{Topological structure of the spectrum of the graphs $\Gamma_n$}\label{sec:topologyofspectrum}

One of the main results of~\cite{brzoska} was a decomposition of the Dirichlet spectrum of the graphs $G_n$ according to the topology of the graphs. Specifically, it was shown that there is a correspondence between bases for the Dirichlet eigenspaces and families of loops in the graphs and that these families of loops have a self-similar structure that enables the dimension of the eigenspaces  to be computed.  In this section we generalize this description to the graphs $\Gamma_n$ by relating eigenfunctions of the Laplacian on the graphs $G_n$ and on the graphs $\Gamma_n$ and applying results from~\cite{brzoska}. In consequence we are able to identify the eigenspaces of $\Delta(\Gamma_n)$ that correspond to families of loops seen in the graphs $G_n$ and therefore count multiplicities of these eigenspaces. This result is in Theorem~\ref{thm:2seriesefns}.

 We are also able to describe a family of one-dimensional eigenspaces that occur for $\Delta(\Gamma_n)$ but not for $G_n$, and that correspond to the central loop in Figure~\ref{fig:4WayGluing}. The corresponding eigenvalues are called the $0$-series, and the fact that they are non-trivial on this loop is apparent from the definition (Definition~\ref{def:02Series}), which requires $f(u)f(v)\neq0$ in Figure~\ref{fig:4WayGluing}. The fact that the eigenvalues are simple is proved in Lemma~\ref{lem:0seriessimple}.

\subsection{Dirichlet spectrum and eigenfunctions of $G_n$}

The following results are from Section~3 of~\cite{brzoska}.  For $f$ on $G_n$ we write $\partial f(x)$ for the Neumann derivative of $f$ at a boundary point $x$, meaning $\partial f(x)=f(x)-f(y)$ where $y$ is the unique neighbor of $x$. The significance of the Neumann derivative is that when copies of the graphs are glued by identifying boundary points, the Laplacian at the gluing point is the sum of the Neumann derivatives at the identified boundary points of the glued graphs. In what follows, the gluing point $v$ is as in Figure~\ref{fig:G_nGluing}.

We consider $f$ that is a solution of $\Delta(G_n)f=\lambda f$ on $G_n\setminus\partial G_n$.  If $f=0$ on $\partial G_n$ it is a Dirichlet eigenfunction and $\lambda\in\Dirspec(\Delta(G_n))$.    If, in addition,  $\partial f=0$ on $\partial G_n$ we call $f$ a Dirichlet-Neumann eigenfunction.

\begin{theorem}[\protect{From~\cite{brzoska} Section~3}]\label{thm:Gnefntheorem}
Suppose $\Delta(G_n)f=\lambda f$ on $G_n\setminus\partial G_n$.
\begin{enumerate}[(1)]
\item\label{item:Gnefn} If $f=0$ on $\partial G_n$, so $f$ is a Dirichlet eigenfunction and $\lambda\in\Dirspec{\Delta(G_n)}$, let $m\leq n$ be the level at which $\lambda$ is born. Then:
\begin{enumerate}[(i)]
\item\label{item:Gnefnsimple} If $n=m$ then $\lambda$ is simple,  $f(v)\neq0$ and the Neumann derivatives at the two boundary points are equal and non-zero. We call the resulting eigenfunction $h_n$. Conversely, if $f(v)\neq0$ then $n=m$.
\item\label{item:Gnefnn-modd} If $m<n$ and $n-m$ is odd then $f$ is Dirichlet-Neumann. There are no such eigenfunctions for $m=n-1$.
\item\label{item:DnotNefn} If $m< n$ and $n-m$ is even then there is a one-dimensional space of eigenfunctions that are Dirichlet but not Neumann. A basis element may be constructed by decomposing the shortest path between the points of $\partial G_n$ into copies of $G_m$ and arranging copies of $h_m$ with alternating signs along this path; the function is zero at all other points. These eigenfunctions vanish at $v$ and have non-zero Neumann derivatives with equal and opposite signs at the points of $\partial G_n$. When $n=m+2$ there are no Dirichlet-Neumann eigenfunctions.
\item\label{item:DNefns} If $m<n-2$ there are Dirichlet-Neumann eigenfunctions $f$ that arise by requiring the restriction of $f$ to each of the copies of $G_{n-1}$, $G_{n-2}$ that are glued at $v$ to either vanish or be Dirichlet-Neumann. If $n-m$ is odd then there are also Dirichlet-Neumann eigenfunctions that vanish on both copies of $G_{n-2}$ and coincide with one of the Dirichlet but not Neumann eigenfunctions from item~\eqref{item:DnotNefn} on the copy of $G_{n-1}$.  These two constructions give all of the Dirichlet-Neumann eigenfunctions.
\end{enumerate}
\item \label{item:Gnnonefn} If $f$ is not a Dirichlet eigenfunction but $\lambda\in\Dirspec(\Delta(G_m))$ is born at level $m$ then:
\begin{enumerate}[(i)]
\item\label{item:notGnefn1} If $m=n$ then the values of $f$ at the points of $\partial G_n$ are non-zero, equal in magnitude and opposite in sign.
\item\label{item:notGnefn2} If $m=n-1$ then $f$ is from a two-dimensional space with basis as follows. Each basis function vanishes at $v$ and both the function and its Neumann derivative vanish at one point of $\partial G_n$. At the other point of $\partial G_n$ the function has value $1$ and a non-zero Neumann derivative. In particular, $f(v)=0$ in Figure~\ref{fig:G_nGluing} for $G_n$.
\item\label{item:notGnefn4} If $n-m\geq 3$ is odd there is a two-dimensional space of solutions. A basis is obtained by recognizing that at each boundary point of $G_n$ one has a copy of $G_{m+1}$, and that the function in~\eqref{item:notGnefn2} can be placed on this copy so the value $1$ occurs at the boundary point. At the other boundary point of the copy of $G_{m+1}$ this function has both Dirichlet and Neumann conditions so can be extended by zero to all of $G_n$ to satisfy $\Delta f=\lambda f$ on $G_n\setminus \partial G_n$. In particular, $f(v)=0$ in Figure~\ref{fig:G_nGluing} for $G_n$.
\item\label{item:notGnefn3} If $m=n-2$ then $f$ has the same value at both points of $\partial G_n$ and its value at $v$ is the negative of its value on $\partial G_n$.
\item\label{item:notGnefn5} If $n-m\geq 4$ is even then there are no functions of this type.
\end{enumerate}
\end{enumerate}
\end{theorem}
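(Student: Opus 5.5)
Since this theorem collects results from Section~3 of~\cite{brzoska}, the plan is to reconstruct it by induction on $n$. The engine is the gluing decomposition of Figure~\ref{fig:G_nGluing}. A solution of $\Delta(G_n)f=\lambda f$ on $G_n\setminus\partial G_n$ restricts to solutions on the copy of $G_{n-1}$ and the two copies of $G_{n-2}$, each away from its own boundary. These restrictions are tied together by two conditions. First, the boundary values of the pieces agree at $v$. Second, the Laplacian equation at $v$ holds, and it reads
\begin{equation*}
\sum \partial f_j(v)=\lambda f(v),
\end{equation*}
where the sum runs over the four boundary points identified at $v$. The other two boundary points of $G_n$ are the remaining boundary points of the two $G_{n-2}$ copies. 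The whole analysis is therefore a bookkeeping of the space of solutions on $G_k$ away from $\partial G_k$, parametrized by the two boundary values and the two Neumann derivatives, with the answer depending on $n-m$.

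The first step is to set up, for $\lambda$ not in $\Dirspec(\Delta(G_k))$, the transfer map. Boundary values then determine the solution uniquely, and the Neumann derivatives are linear in the boundary values through a symmetric $2\times 2$ Dirichlet-to-Neumann matrix. The symmetry of $G_k$ swapping its boundary points makes this matrix depend on just two rational functions of $\lambda$. The gluing relation then gives a recursion for these functions, which is the dynamical map of~\cite{brzoska}. The same relation shows that $\lambda$ is a new Dirichlet eigenvalue of $G_n$ exactly when the equation at $v$ has a nonzero solution with $f(v)\neq0$, boundary values zero, and all pieces non-Dirichlet. This yields item~(1)(i): simplicity holds because $f(v)$ determines $f$, the derivative equality comes from the swap symmetry, and the converse holds because $f(v)\ne0$ forces each piece to be non-Dirichlet. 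It also gives item~(2)(i): at $m=n$ the pieces are still regular, and solving with prescribed boundary values $a,b$ yields a solution only on the antisymmetric line $a=-b$.

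Second, for $\lambda$ born at level $m<n$, I will induct on $n-m$. In the base case $n=m+1$, $\lambda$ is a Dirichlet eigenvalue of the $G_{n-1}$ piece but not of the $G_{n-2}$ pieces. Placing $h_m$ on the $G_{n-1}$ copy forces a nonzero Neumann sum at $v$, which must be cancelled by the $G_{n-2}$ pieces. Solving this gives the two-dimensional space of item~(2)(ii) and shows that no Dirichlet eigenfunction exists, which is the $m=n-1$ clause of~(1)(ii). Each later step alternates the role of $G_m$ between the $G_{n-1}$ slot and the $G_{n-2}$ slots, and this alternation produces the parity pattern:
\begin{itemize}
\item at $n=m+2$, the path construction of~(1)(iii) together with the solution~(2)(iv), where $f(v)=-f(\partial)$;
\item for $n-m\ge3$ odd, the extension-by-zero solutions of~(2)(iii), which are valid because the level-$(m+1)$ functions are Dirichlet--Neumann at one end;
\item for even $n-m\ge4$, a non-existence statement, item~(2)(v).
\end{itemize}
For~(1)(iii), the alternating-sign chain of $h_m$ along the geodesic works because equal Neumann derivatives of $h_m$ cancel at each junction exactly when adjacent signs alternate. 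The count of Dirichlet-but-not-Neumann solutions, exactly one dimension, comes from the rank of the $v$-equation.

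The main obstacle is item~(1)(iv): proving that the two listed constructions exhaust all Dirichlet--Neumann eigenfunctions. I would first try the following argument. Write any Dirichlet eigenfunction $f$ with $m<n-1$; then $f(v)=0$ by~(1)(i). Each restriction to a piece is therefore Dirichlet on that piece. By induction, each restriction decomposes into a Dirichlet--Neumann part plus a multiple of the unique Dirichlet-not-Neumann function from~(1)(iii), which exists only when the piece level minus $m$ is even. Because $n-1$ and $n-2$ have opposite parity, only one type of piece can carry such a non-Neumann component. The Neumann-sum condition at $v$ and the Neumann condition at $\partial G_n$ then constrain these coefficients. For the $G_{n-2}$ pieces, the Neumann condition at $\partial G_n$ kills the non-Neumann component outright, since that function has nonzero derivatives at both of its ends. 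For the $G_{n-1}$ piece, only the sum at $v$ constrains the component, and it can survive because the $G_{n-1}$ copy contributes two opposite derivatives at $v$. This leaves exactly the two families in~(1)(iv) and forces parity $n-m$ odd for the second one. Making the derivative sign conventions consistent here is the delicate step.
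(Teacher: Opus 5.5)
The paper gives no argument of its own for this theorem; it cites each item from Section~3 of~\cite{brzoska}. Your induction through the gluing at $v$ is the natural reconstruction, and your exhaustion argument for~(1)(iv) is sound once $f(v)=0$ is known. However, two steps fail.

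First, item~(2)(v) cannot be proved, because as stated it is false, so your bullet listing it as ``a non-existence statement'' has nothing behind it.
\begin{itemize}
\item Take $\lambda=2$, which is born at $m=1$ since $c_1=\lambda-2$, and take $n=5$.
\item Write $G_3$ as the path $x,a_1,v_3,a_2,y$ with loops at $a_1,a_2$, a double edge $v_3w$, a double edge $wc$, and a loop at $c$. Every solution of $\Delta f=2f$ off $\partial G_3$ has $f(w)=0$, $f(c)=f(x)=f(y)=-f(v_3)$ and $f(a_1)+f(a_2)=2f(v_3)$.
\item Every such solution on $G_4$ vanishes at its gluing point and has Neumann derivative equal to its value at each boundary point.
\item In $G_5$ put $f(v)=1$. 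The $G_4$ piece, with data $(1,1)$, contributes $2$ to $\Delta f(v)$. Each $G_3$ piece has value $1$ at its outer boundary point. The equation at $v$ then reduces to $f(a)+f(a')=2$, where $a,a'$ are the neighbours of $v$ in the two $G_3$ pieces.
\item Choosing $f(a)=f(a')=1$ forces the value $-3$ at the other $a_i$-vertex of each $G_3$. This gives a non-Dirichlet solution with boundary values $1,1$.
\end{itemize}
The mechanism is general. For even $n-m\geq4$, each $G_{n-2}$ piece carries its own multiple of the Dirichlet-not-Neumann function of~(1)(iii), and its non-zero Neumann derivative at $v$ absorbs any mismatch there. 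What the induction actually yields is an analogue of~(2)(iv): the two boundary values are equal, and $f(v)$ equals them as well.

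Second, your treatment of~(1)(ii)--(iv) starts from ``$f(v)=0$ by (1)(i)'', i.e.\ from the converse half of~(1)(i), which you have not established. Saying that $f(v)\neq0$ forces each piece to be non-Dirichlet proves nothing. For $\lambda$ born at $m<n$ the restrictions can perfectly well be non-Dirichlet solutions on the pieces; that is exactly what item~(2) catalogues.
\begin{itemize}
\item The lower-level constraints from~(2) do exclude $f(v)=t\neq0$ when $n-m\leq2$ or $n-m$ is even.
\item For odd $n-m\geq3$, however, the data $(t,t)$ on the $G_{n-1}$ piece and $(0,t)$ on the $G_{n-2}$ pieces are admissible. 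You must then show that the equation at $v$ fails.
\item That is a genuine non-degeneracy, $2\kappa+2\,\partial\Xi\neq\lambda$. Here $\kappa$ is the Neumann derivative at the value-$1$ end of the function in~(2)(ii)/(iii), and $\Xi$ is the symmetric solution on $G_{n-1}$ with boundary values $1,1$.
\item Unwinding the gluing, for $m\geq2$ this amounts to the off-diagonal Dirichlet-to-Neumann entry of $G_{m-1}$ being non-zero at $\lambda$.
\end{itemize}
That fact has the same source as two other claims you rely on. One is the non-vanishing of the Neumann derivatives of $h_n$ in~(1)(i); swap symmetry gives only their equality. The other is the analogous off-diagonal non-vanishing you need for~(2)(i). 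All three follow from a unique-continuation lemma that your proposal never states: if $\lambda\notin\Dirspec(\Delta(G_j))$ for all $j\leq k$, then a solution on $G_k$ whose value and Neumann derivative both vanish at one boundary point is identically zero. It is proved by induction through the decomposition of Figure~\ref{fig:G_nGluing}.
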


\begin{proof}
We give more specific references for the individual items. All references are to results in~\cite{brzoska}. Items~\eqref{item:Gnefnsimple} and~\eqref{item:notGnefn1} are from Proposition~3.5 and Figure~8. The remaining subitems in~\eqref{item:Gnefn} are from Theorem~3.11.  Item~\eqref{item:notGnefn2} is from Lemma~3.7 and Figure~9.  Item~\eqref{item:notGnefn3} is from Lemma~3.9 and Figure~10, and Items~\eqref{item:notGnefn4} and~\eqref{item:notGnefn5} are from Lemma~3.10. 
\end{proof}


\subsection{Spectrum and eigenfunctions on $\Gamma_n$}  

We relate eigenfunctions between levels of $\Gamma_n$ and between $\Gamma_n$ and $G_m$ graphs in order to establish structural features that will allow us to count the multiplicities of eigenvalues.  For this purpose we frequently use the decompositions in Figures~\ref{fig:G_nGluing}, \ref{fig:2WayGluing} and \ref{fig:4WayGluing}. In particular, we refer to the points $u$ and $v$ in Figure~\ref{fig:4WayGluing} as gluing points and note that the gluing point $u$ in Figure~\ref{fig:4WayGluing} is the same point as $u$ in Figure~\ref{fig:2WayGluing} and that the point $v$ in Figure~\ref{fig:4WayGluing} is the same point as $v$ in the copy of $G_n$ as shown in Figure~\ref{fig:G_nGluing}.

\begin{lemma}\label{lem:EigenvaluesPersist}
If $\lambda \in \spec(\Delta(\Gamma_n))$ then $\lambda \in \spec(\Delta(\Gamma_{n+1}))$.
\end{lemma}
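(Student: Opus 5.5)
The plan is to show that $\Gamma_{n+1}$ is a two-fold covering of $\Gamma_n$ in a way compatible with the Laplacian. Then pulling an eigenfunction of $\Delta(\Gamma_n)$ back along the covering map gives an eigenfunction of $\Delta(\Gamma_{n+1})$ with the same eigenvalue. The case $n=0$ is disposed of directly: $\Gamma_0$ has one vertex and only loops, so $\spec(\Delta(\Gamma_0))=\{0\}$. Since $0$ is an eigenvalue of every graph Laplacian (constants), $0\in\spec(\Delta(\Gamma_1))$.

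For $n\geq1$ I would compare two descriptions from Definition~\ref{def:Gamma_nConstruction}.
\begin{itemize}
\item $\Gamma_n$ is a copy of $G_n$ and a copy of $G_{n-1}$, with all four boundary points identified to the single vertex $u$ (Figure~\ref{fig:2WayGluing}).
\item $\Gamma_{n+1}$ is two copies of $G_n$ and two copies of $G_{n-1}$ glued at $u$ and $v$ (Figure~\ref{fig:4WayGluing} with $n$ replaced by $n+1$). Here one copy of $G_n$ has both boundary points at $u$ and the other has both at $v$, while each copy of $G_{n-1}$ has one boundary point at $u$ and the other at $v$.
\end{itemize}
Define $\pi:\Gamma_{n+1}\to\Gamma_n$ on vertices as follows. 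Each copy of $G_n$ maps onto the copy of $G_n$ in $\Gamma_n$ by the identity, and each copy of $G_{n-1}$ maps onto the copy of $G_{n-1}$ in $\Gamma_n$ by the identity. Both gluing points $u,v$ map to $u$. The boundary points are identified in $\Gamma_n$, so this is well defined, and $\pi$ respects adjacency and edge multiplicities $c_{xy}$ (including the loops inside the pieces).

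The key step is the local bijection property. For each vertex $x$ of $\Gamma_{n+1}$, $\pi$ should give a bijection between the edges at $x$ and the edges at $\pi(x)$ (loops counted appropriately). Concretely, I would verify
\[
\sum_{y'\in\pi^{-1}(y)} c_{xy'}=c_{\pi(x)y}\quad\text{for all vertices } y \text{ of } \Gamma_n .
\]
At interior vertices of the pieces this is immediate, since $\pi$ is the identity on a neighborhood. At $u\in\Gamma_{n+1}$ the four incident edges are:
\begin{itemize}
\item two boundary edges of the $G_n$ copy attached at $u$;
\item one boundary edge from each of the two $G_{n-1}$ copies, namely the one incident to $u$.
\end{itemize}
These map to the two $G_n$ boundary edges and the two distinct $G_{n-1}$ boundary edges at $u\in\Gamma_n$. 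For the $G_{n-1}$ copies one must check that the two copies contribute \emph{different} boundary edges. I would arrange this by choosing the identification of the second $G_{n-1}$ copy via the reflection symmetry of $G_{n-1}$ that swaps its boundary points, which preserves the Laplacian. The same check works at $v$. This bookkeeping, getting the orientations of the $G_{n-1}$ copies right so that $\pi$ is genuinely a covering at $u$ and $v$, is the only real obstacle.

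Given the covering property, let $\Delta(\Gamma_n)f=\lambda f$ with $f\neq0$ and set $\tilde f=f\circ\pi$, which is nonzero. Then
\[
\Delta(\Gamma_{n+1})\tilde f(x)=\sum_{y'}c_{xy'}\bigl(f(\pi x)-f(\pi y')\bigr)=\sum_{y}c_{\pi(x)y}\bigl(f(\pi x)-f(y)\bigr)=\lambda\tilde f(x),
\]
so $\lambda\in\spec(\Delta(\Gamma_{n+1}))$. Equivalently, one can phrase this without coverings. On each piece $\tilde f$ satisfies the eigenvalue equation at interior vertices because $f$ does. At $u$ and $v$ the Laplacian is the sum of the Neumann derivatives of the glued pieces, and this sum equals the sum at $u\in\Gamma_n$ by the edge correspondence above.
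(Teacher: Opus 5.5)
Your proposal is correct and is essentially the paper's argument: copying the restrictions of $f$ to the $G_n$ and $G_{n-1}$ pieces onto the two copies of each piece in $\Gamma_{n+1}$ is exactly your pullback $f\circ\pi$. The paper checks the eigenvalue equation at $u$ and $v$ the same way you do, by matching values and Neumann derivatives, and your explicit treatment of the $n=0$ case and of the orientation of the two $G_{n-1}$ copies (so the Neumann derivatives at $u$ and at $v$ come from different boundary points) fills in details the paper leaves implicit.
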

\begin{proof}
From Figure~\ref{fig:2WayGluing} take the restriction of the eigenfunction $f$ on $\Gamma_n$ to the copies of $G_{n-1}$ and $G_n$ to define functions on these graphs. Then construct $\Gamma_{n+1}$ from two copies of $G_{n-1}$ and two copies of $G_{n}$ as in Figure~\ref{fig:4WayGluing} and copy these functions to the new graph. The result satisfies the eigenfunction equation: at points other than $u$ and $v$ in Figure~\ref{fig:4WayGluing} this is immediate, while at $u$ and $v$ we need only observe that the gluing has the same values and Neumann derivatives at these points as $f$ does at the point $u$ of Figure \ref{fig:2WayGluing}.
\end{proof}
\begin{remark}\label{rem:EigenvaluesPersist}
An observation about this proof that will be useful later is that the function constructed on $\Gamma_{n+1}$ has the same value at $u$ and at $v$ in Figure~\ref{fig:4WayGluing} as the eigenfunction on $\Gamma_n$ had at the point  $u$ of Figure \ref{fig:2WayGluing}.
\end{remark}

A partial converse is available by recognizing the role of symmetry in this construction.  Together, the proofs give a one-to-one correspondence between eigenfunctions on $\Gamma_{n-1}$ and eigenfunctions on $\Gamma_n$ that are symmetric under a reflection.

\begin{corollary}\label{cor:Gamma_ntoGamma_n-1}
If $n\geq 2$ and $\lambda\in\spec(\Delta(\Gamma_n))$ has an eigenfunction $f$ that is symmetric under the reflection across the central vertical line of symmetry in Figure~\ref{fig:4WayGluing} then $\lambda\in\spec(\Delta(\Gamma_{n-1}))$.
\end{corollary}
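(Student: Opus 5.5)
The plan is to reverse the construction in the proof of Lemma~\ref{lem:EigenvaluesPersist}. View $\Gamma_n$ as in Figure~\ref{fig:4WayGluing}: two copies of $G_{n-1}$, one on the left attached at $u$ and one on the right attached at $v$, and two copies of $G_{n-2}$, one on top and one on the bottom, each with one boundary point at $u$ and one at $v$. The reflection across the central vertical line swaps $u$ and $v$, swaps the two copies of $G_{n-1}$, and maps each copy of $G_{n-2}$ to itself by its end-swapping symmetry. If $f$ is symmetric, then $f(u)=f(v)$. Moreover, the restriction of $f$ to the left $G_{n-1}$ is carried to the restriction on the right $G_{n-1}$, and the restriction to each $G_{n-2}$ is symmetric.

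I would build $g$ on $\Gamma_{n-1}$, viewed as in Figure~\ref{fig:2WayGluing} as a copy of $G_{n-1}$ and a copy of $G_{n-2}$ glued at the single point $u$. On the copy of $G_{n-1}$, put the restriction of $f$ to the left $G_{n-1}$ of $\Gamma_n$. On the copy of $G_{n-2}$, put the restriction of $f$ to the top $G_{n-2}$, but pulled back through a folding map. The boundary points of $G_{n-2}$ in $\Gamma_{n-1}$ are both identified with $u$. In $\Gamma_n$, the top copy has one boundary point at $u$ and the other at $v$, and both carry the value $f(u)=f(v)$. So the definition is consistent at $u$.

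Next I would check the eigenvalue equation. At interior points of each piece it holds because $f$ satisfies it locally and the neighbours are the same. At $u$, the Laplacian of $g$ is the sum of four Neumann derivatives: two from the boundary points of $G_{n-1}$ and two from the boundary points of $G_{n-2}$.

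For $f$ at $u$ in $\Gamma_n$, the Laplacian is also a sum of four Neumann derivatives: two from the left $G_{n-1}$, one from the top $G_{n-2}$ at its $u$-end, and one from the bottom $G_{n-2}$ at its $u$-end. The two derivatives of $g$ coming from the top $G_{n-2}$ are its derivatives at the $u$-end and at the $v$-end. By symmetry of $f$ on the top copy, the $v$-end derivative equals the $u$-end derivative.

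It remains to match this with $f$. The key point is that the bottom copy's $u$-end derivative must equal the top copy's. This is where I expect the main obstacle. The reflection only gives top/top and bottom/bottom symmetry, not top$=$bottom. I would handle it by choosing instead to place on $G_{n-2}$ the average of the top and bottom restrictions, $\tfrac12(f_{\mathrm{top}}+f_{\mathrm{bot}})$. This function still solves the interior equation, has boundary values $f(u)$, and is symmetric.

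With the average, its two Neumann derivatives at $u$ sum to $\partial f_{\mathrm{top}}(u)+\partial f_{\mathrm{bot}}(u)$. The eigen-equation at $u$ then matches exactly. So $\Delta(\Gamma_{n-1})g=\lambda g$.

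Finally, I would verify that $g\neq 0$, which is needed for $\lambda$ to be an eigenvalue. Suppose $g\equiv0$. Then $f$ vanishes on the left $G_{n-1}$, hence by symmetry on the right $G_{n-1}$. Also $f_{\mathrm{top}}=-f_{\mathrm{bot}}$ with $f(u)=f(v)=0$.

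In that case $\lambda$ would be a Dirichlet eigenvalue of $G_{n-2}$, with $f_{\mathrm{top}}$ symmetric, nonzero, and having Neumann derivatives that cancel with the left copy's zero derivatives. It is not clear this degenerate situation is excluded in general. If it is not, I would instead place $f_{\mathrm{top}}$ alone and fall back on the observation that such $\lambda$ already lies in $\Dirspec(\Delta(G_{n-2}))$. I would then use Theorem~\ref{thm:Gnefntheorem} to produce an eigenfunction on $\Gamma_{n-1}$ supported on the $G_{n-2}$ copy, vanishing at $u$. This uses the case analysis of Neumann derivatives: equal derivatives for the simple case, opposite derivatives in the Dirichlet-but-not-Neumann case. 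Symmetry selects the compatible case.
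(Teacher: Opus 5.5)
Your averaging step is sound. The vertical reflection gives $\partial f_{\mathrm{top}}(u)=\partial f_{\mathrm{top}}(v)$ and $\partial f_{\mathrm{bot}}(u)=\partial f_{\mathrm{bot}}(v)$. So $\tfrac12(f_{\mathrm{top}}+f_{\mathrm{bot}})$, placed on the copy of $G_{n-2}$ in $\Gamma_{n-1}$, contributes exactly $\partial f_{\mathrm{top}}(u)+\partial f_{\mathrm{bot}}(u)$ at the glued point, and $\Delta g=\lambda g$ holds.

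\textbf{The gap: the degenerate case $g\equiv 0$, where the fallback fails.} Suppose $f_{\mathrm{top}}$ is invariant under the end swap. Then its two Neumann derivatives are equal, so placing it alone on $G_{n-2}$ (both ends at $u$) gives $\Delta g(u)=2\,\partial f_{\mathrm{top}}(u)$, while $\lambda g(u)=0$. A single copy glued to itself needs opposite derivatives, i.e.\ an antisymmetric function. So symmetry selects precisely the incompatible case, unless $f_{\mathrm{top}}$ happens to be Dirichlet--Neumann.

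\textbf{This case cannot be excluded: it contradicts the statement as written.} Take $n=3$. The top and bottom pieces are copies of $G_1$, whose single interior vertices $c_{\mathrm{top}},c_{\mathrm{bot}}$ are each adjacent to $u$ and $v$. Let $f=1$ at $c_{\mathrm{top}}$, $f=-1$ at $c_{\mathrm{bot}}$, and $f=0$ elsewhere. Then $\Delta f=2f$: at $u$ and $v$ the contributions $-1$ and $+1$ cancel. This $f$ is fixed by the vertical reflection. Yet $P_2(2)=16-96+160-64=16\neq0$ by Proposition~\ref{prop:CharPolyRecursion2}, so $2\notin\spec(\Delta(\Gamma_2))$.

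More generally, consider the eigenfunction of Lemma~\ref{lem:Dirspectospec}, equal to $h_{n-2}$ on one copy of $G_{n-2}$ and $-h_{n-2}$ on the other. It is reflection invariant, because $h_{n-2}$ has equal nonzero Neumann derivatives (Theorem~\ref{thm:Gnefntheorem}~\eqref{item:Gnefnsimple}) and is therefore end-swap symmetric. But its eigenvalue is born at level $n$ by Lemma~\ref{lem:2seriesareDirevals}, so it is not in $\spec(\Delta(\Gamma_{n-1}))$.

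\textbf{The paper's proof has the same hidden assumption.} It restricts $f$ to the left $G_{n-1}$ and one copy of $G_{n-2}$ and identifies $u$ with $v$. This silently assumes the equality you flagged, namely $\partial f_{\mathrm{top}}(v)=\partial f_{\mathrm{bot}}(u)$.

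\textbf{The repair is to the hypothesis.} That equality does hold if $f$ is invariant under the half-turn of Figure~\ref{fig:4WayGluing}, the composition of the vertical and horizontal reflections, which swaps top with bottom as well as left with right.
\begin{itemize}
\item This is the symmetry actually produced by the construction in Lemma~\ref{lem:EigenvaluesPersist}. There the two copies of $G_{n-2}$ must be attached with opposite orientations so that each gluing point receives both Neumann derivatives.
\item Under half-turn invariance, the paper's plain restriction works without averaging.
\item Nonvanishing is then automatic: $g\equiv0$ on the left $G_{n-1}$ and top $G_{n-2}$ forces $f\equiv0$ on their images, hence everywhere.
\end{itemize}
With only the vertical reflection, your averaging proves the claim exactly when $f$ is not of the degenerate form, i.e.\ unless $f$ vanishes on both copies of $G_{n-1}$ and $f_{\mathrm{bot}}=-f_{\mathrm{top}}$.
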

\begin{proof}
Working in Figure~\ref{fig:4WayGluing}, restrict $f$ to the copy of $G_{n-1}$ that contains $u$ and to one of the copies of $G_{n-2}$, then identify $u$ and $v$ to obtain $\Gamma_{n-1}$ as in Figure~\ref{fig:2WayGluing}. Observe that $f(u)=f(v)$ ensures $f$ is well-defined on $\Gamma_{n-1}$ and that the Laplacian at this point is the same as it was at $u$ in $\Gamma_n$. It follows that this is an eigenfunction on $\Gamma_{n-1}$.
\end{proof}

\begin{lemma}\label{lem:Dirspectospec}
If $\lambda\in\Dirspec(\Delta(G_m))$ is born at level $m$ then, for all $n\geq m+2$, $\lambda\in\spec(\Delta(\Gamma_n))$ and has an eigenfunction that vanishes at both gluing points $u$ and $v$ in Figure~\ref{fig:4WayGluing}.
\end{lemma}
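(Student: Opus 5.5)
We want an eigenfunction of $\Delta(\Gamma_n)$ with eigenvalue $\lambda$ that vanishes at both $u$ and $v$ in Figure~\ref{fig:4WayGluing}. The plan is to build it piece by piece on the four subgraphs of that figure (two copies of $G_{n-1}$ and two copies of $G_{n-2}$), using the Dirichlet eigenfunctions on $G_k$ supplied by Theorem~\ref{thm:Gnefntheorem}. The gluing principle stated before that theorem reduces the problem to a local check. Suppose each piece satisfies $\Delta f=\lambda f$ at its interior points and vanishes on its boundary. Then the combined function is well defined, takes the value $0$ at $u$ and $v$, and satisfies the eigenfunction equation at every point other than $u$ and $v$. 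At $u$ (and at $v$) the equation $\Delta f(u)=\lambda f(u)=0$ reduces to requiring that the Neumann derivatives of the four pieces meeting there sum to zero.

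The simplest choice is to put the function on one piece only. Take $n\ge m+2$ and choose a Dirichlet eigenfunction on $G_{n-2}$ (with $n-2\ge m$), or on $G_{n-1}$, that is also Neumann, placed on one copy and extended by zero elsewhere. Theorem~\ref{thm:Gnefntheorem} does not always provide such a Dirichlet--Neumann function, so I will use cancellation of Neumann derivatives instead. This gives two cases.

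Case $n=m+2$. Place $h_m$ on both copies of $G_{m}=G_{n-2}$ with opposite signs, and set $f=0$ on both copies of $G_{n-1}$. By item~\eqref{item:Gnefnsimple}, $h_m$ has equal nonzero Neumann derivatives $d$ at its two boundary points. So at $u$ the contributions are $d$ from the first copy and $-d$ from the second, which sum to zero, and the same happens at $v$. The zero copies of $G_{n-1}$ contribute nothing. The result is a nonzero eigenfunction vanishing at $u$ and $v$.

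General $n\ge m+2$. The same antisymmetric construction works as long as we have a Dirichlet eigenfunction $g$ on $G_{n-2}$ whose Neumann derivatives at its two boundary points are equal. Placing $g$ and $-g$ on the two copies of $G_{n-2}$ then cancels the derivatives at both $u$ and $v$. Theorem~\ref{thm:Gnefntheorem} supplies such a $g$ in every subcase, with $k=n-2\ge m$:
\begin{itemize}
\item if $k=m$, take $g=h_m$;
\item if $k-m$ is odd, item~\eqref{item:Gnefnn-modd} makes every Dirichlet eigenfunction Dirichlet--Neumann, with both derivatives equal to $0$;
\item if $k-m$ is even and positive, item~\eqref{item:DnotNefn} gives a Dirichlet eigenfunction with derivatives $d$ and $-d$.
\end{itemize}

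The third subcase is the main obstacle: the derivatives of $g$ are opposite, not equal. I would handle it by placing $g$ on both copies of $G_{n-2}$ without a sign change but with the orientation reversed on one copy. The two copies are glued as a cycle through $u$ and $v$, so reversing one copy makes the contributions at $u$ equal to $d$ and $-d$, and likewise at $v$. Equivalently, one can use the alternating chain of copies of $h_m$ along the central cycle through $u$ and $v$.

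The remaining worry is the case $n-2-m$ odd with $n-2=m+1$, where item~\eqref{item:Gnefnn-modd} gives no eigenfunctions on $G_{m+1}$. There I would switch to $G_{n-1}=G_{m+2}$, apply the cases above to that level, and use the two copies of $G_{n-1}$ in place of the copies of $G_{n-2}$. Finally, I would confirm the orientation bookkeeping at $u$ and $v$ against Figure~\ref{fig:4WayGluing}.
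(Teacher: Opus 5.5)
Your case $n=m+2$ is exactly the paper's construction, and the paper's proof writes out only that case. The extension to $n>m+2$ comes from Lemma~\ref{lem:EigenvaluesPersist} together with Remark~\ref{rem:EigenvaluesPersist}. Lifting an eigenfunction from $\Gamma_n$ to $\Gamma_{n+1}$ gives one whose values at both $u$ and $v$ equal the old value at $u$, namely $0$, so induction on $n$ finishes with no further case analysis.

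Your level-by-level construction from the classification in Theorem~\ref{thm:Gnefntheorem} is a genuinely different route. It has the merit of exhibiting an explicit eigenfunction on each $\Gamma_n$. However, it relies on the existence parts of items (iii) and (iv) and on the exact geometry of Figure~\ref{fig:4WayGluing}, and two steps need repair.

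First, in the case $n=m+3$ you misread Figure~\ref{fig:4WayGluing}. Only the two copies of $G_{n-2}$ form the cycle through $u$ and $v$. Each copy of $G_{n-1}$ has \emph{both} of its boundary points at a single gluing point: one copy at $u$, the other at $v$. So the cycle/orientation argument does not transfer to the copies of $G_{n-1}$. The case is rescued by a different mechanism. The function $g$ from item (iii) on $G_{m+2}$ has Neumann derivatives $d$ and $-d$, and both land at the same vertex. Hence putting $g$ on one copy of $G_{n-1}$ and zero elsewhere already gives $\Delta f(u)=0=\lambda f(u)$. A function with equal derivatives, like $h_m$, would fail here, so this needs to be said rather than deferred to ``bookkeeping.''

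Second, for $k-m\geq 3$ odd, item (ii) only says that any Dirichlet eigenfunction on $G_k$ is Dirichlet--Neumann. Existence of a nonzero one comes from the second construction in item (iv), which uses the function from item (iii) on the copy of $G_{k-1}$.

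Finally, your ``main obstacle'' is not real. Suppose both copies of $G_{n-2}$ have the same boundary point of $G_{n-2}$ glued at $u$. Then $g$ and $-g$ cancel at $u$ and at $v$ whatever the two Neumann derivatives of $g$ are. Equal derivatives are needed only for the mirror-image gluing, and your orientation reversal is just the equivalent fix.
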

\begin{proof}
From Theorem~\ref{thm:Gnefntheorem}~\eqref{item:Gnefnsimple} the hypothesis gives a one-dimensional eigenspace spanned by a function $h_m$ on $G_m$.  Now construct a function $f$ on  $\Gamma_{m+2}$ by viewing the graph in the manner of Figure~\ref{fig:4WayGluing} as two copies of $G_m$ and two of $G_{m+1}$. Set $f\equiv0$ on the copies of $G_{m+1}$,  $f=h_m$ on one copy of $G_m$ and $f=-h_m$ on the other copy of $G_m$.  Evidently $\Delta f=\lambda f$ at points other than the gluing points $u$ and $v$. However, at these points the Neumann derivatives from the copies of $h_m$ and $-h_m$ are equal and opposite, so sum to zero, while the Neumann derivatives from the $G_{m+1}$ graphs are zero. It follows that $\Delta f(u)=\Delta f(v)=0=\lambda f(u)=\lambda f(v)$, so $f$ is an eigenfunction with the stated properties.
\end{proof}

With these two constructions in hand we choose to distinguish two sets of eigenvalues of $\Delta(\Gamma_n)$ according to the values of eigenfunctions at the gluing points $u,v$ in Figure~\ref{fig:4WayGluing} at the level of birth of the eigenvalue.  

\begin{definition}\label{def:02Series}
Consider $\lambda\in \spec(\Delta(\Gamma_n))$ that is born at level $n$.  If $n\geq2$ we say $\lambda$ is in the 2-series if it has an eigenfunction $f$ on $\Gamma_n$ with $f(u)=f(v)=0$ at the gluing points in Figure~\ref{fig:4WayGluing}.  We say $\lambda$ is in the 0-series at level $n$ if either $n=1$, or $n\geq 2$ and every corresponding $\Gamma_n$ eigenfunction $f$  has $f(u)f(v)\neq0$. We also put the eigenvalue $0$ corresponding to $\Gamma_0$ (which can be computed directly) in the 0-series; in this case there is only one vertex and the eigenfunction is non-zero there.
\end{definition}

We refer to~\cite{Bannonetal} for an explanation of the origin of this nomenclature, as it comes from the spectral dynamics of certain weighted Laplacians investigated by Grigorchuk and \.Zuk~\cite{GZ2} that are not otherwise used in the present work. 
However, we also note that in our later results it becomes apparent that the 2-series come from the $G_n$ graphs studied in~\cite{brzoska} whereas the 0-series reflect structure of the $\Gamma_n$ graphs and appear here for the first time.  Our next result tells us that  every $\lambda\in\spec(\Delta(\Gamma_n))$ is in either the 2-series or the 0-series.   

\begin{lemma}\label{lem:NoSingle0}
If $n\geq2$, $f$ is an eigenfunction of $\Delta(\Gamma_n)$ and $f$ vanishes at either of the gluing points $u$ or $v$ in Figure~\ref{fig:4WayGluing} then it vanishes at both $u$ and $v$.
\end{lemma}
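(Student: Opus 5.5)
The idea is to assume $f(u)=0$ but $f(v)\neq0$ and derive a contradiction. We work with the decomposition of Figure~\ref{fig:4WayGluing}:
\begin{enumerate}[(a)]
\item one copy of $G_{n-1}$ has both boundary points glued at $u$;
\item the other copy of $G_{n-1}$ has both boundary points glued at $v$;
\item each of the two copies of $G_{n-2}$ has one boundary point at $u$ and the other at $v$.
\end{enumerate}
Restricting $f$ to each piece gives a solution of $\Delta g=\lambda g$ away from the boundary of that piece. On the copies of $G_{n-2}$ these restrictions have boundary values $0$ at the $u$-end and $f(v)\neq0$ at the $v$-end. On the copy of $G_{n-1}$ at $v$ the restriction has the equal non-zero value $f(v)$ at both boundary points. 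On the copy of $G_{n-1}$ at $u$ it is either $0$ or a Dirichlet eigenfunction. The strategy is to confront these boundary behaviours with the classification in Theorem~\ref{thm:Gnefntheorem}, together with the eigenvalue equations at $u$ and $v$. Those equations say that the sum of the Neumann derivatives of the pieces equals $0$ at $u$ and equals $\lambda f(v)$ at $v$.

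First I would treat the case $\lambda\notin\Dirspec(\Delta(G_{n-2}))\cup\Dirspec(\Delta(G_{n-1}))$.
\begin{enumerate}[(a)]
\item The restriction to the copy of $G_{n-1}$ at $u$ vanishes. Each restriction is determined by its boundary data through the Dirichlet-to-Neumann map of the relevant graph.
\item The reflection symmetry of $G_{n-2}$ exchanging its boundary points makes the Dirichlet-to-Neumann map a symmetric $2\times2$ matrix with equal diagonal entries $a$ and off-diagonal entry $b$.
\item The equation at $u$ reads $2b\,f(v)=0$, so $b=0$.
\item If $b=0$, the solution on $G_{n-2}$ with boundary data $(0,1)$ has vanishing value and vanishing Neumann derivative at one end. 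Its reflection then combines with it into a solution that realizes $\lambda$ as a Dirichlet eigenvalue of a larger graph in the $G_k$ family built from $G_{n-2}$. I expect this, together with Theorem~\ref{thm:Gnefntheorem}~\eqref{item:Gnnonefn}, to force $\lambda$ into some $\Dirspec(\Delta(G_m))$ with $m\le n-2$. That contradicts the case assumption.
\end{enumerate}
Making this step rigorous, that is, showing $b\neq0$ off the Dirichlet spectrum, is where I am least sure of the details.

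Next, suppose $\lambda$ is a Dirichlet eigenvalue born at some level $m$. On a copy of $G_{n-2}$ the restriction $g$ is a non-Dirichlet solution (its boundary value at $v$ is non-zero), so we apply Theorem~\ref{thm:Gnefntheorem}~\eqref{item:Gnnonefn} with $n-2$ in place of $n$. Since $g$ is only determined modulo Dirichlet eigenfunctions, I would first subtract a Dirichlet eigenfunction so that $g$ lies in the spanned space described there.
\begin{enumerate}[(a)]
\item Items~\eqref{item:notGnefn1} and~\eqref{item:notGnefn3} require the two boundary values to have equal magnitude, which is impossible for boundary values $0$ and $f(v)\neq0$.
\item Item~\eqref{item:notGnefn5} says no such solution exists.
\item This leaves $n-2-m$ odd, covered by items~\eqref{item:notGnefn2} and~\eqref{item:notGnefn4}. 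Then $g$ is a multiple of the basis function that vanishes together with its Neumann derivative at the $u$-end, with a non-zero Neumann derivative at the $v$-end.
\end{enumerate}

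In this remaining case $n-1-m$ is even and at least $2$. Hence on the copy of $G_{n-1}$ at $v$, item~\eqref{item:notGnefn3} applies when $n-1-m=2$, and item~\eqref{item:notGnefn5} rules the case out entirely when $n-1-m\ge4$. So only $m=n-3$ survives. For $m=n-3$ the restriction to the copy of $G_{n-1}$ at $v$ is, modulo Dirichlet eigenfunctions, the explicit function of item~\eqref{item:notGnefn3}, built from three copies of $G_{n-3}$ glued at a central point. The copies of $G_{n-2}$ carry the function of item~\eqref{item:notGnefn2}.

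I would finish by computing the Neumann derivatives of these explicit functions at $v$ in terms of the Neumann derivative of $h_{n-3}$, using the pictures referenced from~\cite{brzoska}. The aim is to show that their sum is not $\lambda f(v)$. A cleaner alternative, which I would try first, is to apply the same argument at the point $u$ with the roles reversed, giving the copy of $G_{n-1}$ at $u$ a forced behaviour incompatible with the Dirichlet conditions there.

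The main obstacle is this final $m=n-3$ bookkeeping, together with the generic-$\lambda$ claim $b\neq0$. The remaining steps are direct applications of Theorem~\ref{thm:Gnefntheorem}.
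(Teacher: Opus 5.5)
Your proposal has genuine gaps. They sit where you flagged uncertainty, plus one case you did not notice.

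First, in the generic case, step (d) cannot work as described. Item (2) of Theorem~\ref{thm:Gnefntheorem} presupposes that $\lambda$ is already a Dirichlet eigenvalue born at some level, so it cannot be used to \emph{put} $\lambda$ into a Dirichlet spectrum. Moreover, gluing your solution $g$ to its reflection does not by itself give a Dirichlet eigenfunction on a graph of the family. The junction needs the attached $G_{n-1}$ loop, and the eigenvalue equation must hold there. Supplying both just reproduces $f$ on a copy of $G_n$. That yields $\lambda\in\Dirspec(\Delta(G_n))$ born at level $n$, not at a level $\le n-2$, so it does not contradict your case assumption.

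Second, in the Dirichlet case, applying item (2) on $G_{n-2}$ requires $m\le n-2$, so the case $m=n-1$ is omitted. It is easy to close: item (2)(i) on the copy of $G_{n-1}$ at $v$ forbids the equal non-zero boundary values $(f(v),f(v))$.

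Third, for $m=n-3$ the decisive computation at $v$ is not done, and your alternative at $u$ cannot succeed. At $u$ the two copies of $G_{n-2}$ contribute zero Neumann derivative by item (2)(ii). The copy of $G_{n-1}$ at $u$ is a multiple of the Dirichlet-but-not-Neumann function of item (1)(iii). Its two Neumann derivatives are equal and opposite and cancel at $u$, so the equation at $u$ holds automatically.

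The missing idea makes all of this case analysis unnecessary. The copy of $G_{n-1}$ at $v$, together with the two copies of $G_{n-2}$, is exactly the copy of $G_n$ in Figure~\ref{fig:2WayGluing}. Its gluing point is $v$, and both of its boundary points lie at $u$. Since $f(u)=0$, the restriction of $f$ to this $G_n$ is a Dirichlet eigenfunction (the eigenvalue equation at $v$ is part of the hypothesis) with $f(v)\neq0$. The converse clause of item (1)(i) then says $\lambda$ is born at level $n$ in the Dirichlet spectrum. The restriction is therefore $c\,h_n$ with $c\neq0$, with equal non-zero Neumann derivatives at both boundary points. Because $\lambda\notin\Dirspec(\Delta(G_{n-1}))$, the restriction to the copy of $G_{n-1}$ at $u$ vanishes. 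The equation at $u$ then reads $2c\,\partial h_n=0$, a contradiction.

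This is the paper's proof. In your generic-case notation it is precisely the statement $b\,f(v)=c\,\partial h_n\neq0$. It also disposes of the $m=n-3$ case at once: there the equation at $v$ would again make $f$ a Dirichlet eigenfunction on $G_n$ that is non-zero at $v$, contradicting birth at level $n-3$.
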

\begin{proof}
Suppose the contrary and, without loss of generality due to the symmetry of Figure~\ref{fig:4WayGluing}, that $f(u)=0$, $f(v)=1$.  The corresponding eigenvalue $\lambda$ is born at some level $m$ and, since $\lambda\in\spec(\Delta(\Gamma_n))$, we have $m\leq n$. However, the restrictions of $f$ to the copies of $G_{n-1}$ and $G_n$ in Figure~\ref{fig:2WayGluing} must be eigenfunctions of $\Delta(G_{n-1})$ and $\Delta(G_n)$ respectively.  Since $f(v)=1$,  Theorem~\ref{thm:Gnefntheorem}~\eqref{item:Gnefnsimple} tells us that $m=n$ and the restriction of $f$ to the copy of $G_n$ is a non-zero multiple of $h_n$, so has non-zero equal Neumann derivatives at its boundary points.  But $\lambda$ being born at level $m=n$ says it is not in $\Dirspec(\Delta(G_{n-1}))$, so the restriction of $f$ to the copy of $G_{n-1}$ must vanish because its boundary data is zero from $f(u)=0$.

This reasoning tells us that $f(u)=0$ and $\Delta f(u)$ is the sum of the Neumann derivatives of the pieces glued at $u$. However, the Neumann derivatives from the copy of $G_{n-1}$ are both zero because $f$ vanishes there, while those from the copy $G_n$ are the Neumann derivatives of $h_n$ at the boundary points of $G_n$, which are non-zero and equal. It follows that $\Delta f(u)\neq0=\lambda f(u)$, so $f$ cannot be an eigenfunction.
\end{proof}

We establish some properties of the $0$-series and $2$-series eigenfunctions in the lemmas below.

\begin{lemma}\label{lem:0seriessimple}
If $\lambda\in\spec(\Delta(\Gamma_n))$ is in the $0$-series then it is simple and its eigenfunction either satisfies $f(u)=f(v)$ or $f(u)=-f(v)$.
\end{lemma}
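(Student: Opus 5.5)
The plan is to use the reflection symmetry of Figure~\ref{fig:4WayGluing} together with Lemma~\ref{lem:NoSingle0}. Let $\lambda$ be in the $0$-series at level $n\geq 2$; the cases $n=0,1$ are checked directly, since $\Gamma_0$ has one vertex and $\Gamma_1$ has two. By definition every eigenfunction $f$ for $\lambda$ satisfies $f(u)f(v)\neq 0$.

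\emph{Simplicity.} Suppose $f_1,f_2$ are two eigenfunctions for $\lambda$. Since $f_1(u)\neq0$, the combination $g=f_2-\frac{f_2(u)}{f_1(u)}f_1$ is either zero or an eigenfunction for $\lambda$ with $g(u)=0$. The latter contradicts the $0$-series hypothesis, because every eigenfunction must be non-zero at $u$. (Alternatively, Lemma~\ref{lem:NoSingle0} gives $g(v)=0$ as well, so $\lambda$ would have an eigenfunction vanishing at both gluing points.) Hence $g=0$ and the eigenspace is one-dimensional.

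\emph{Values at $u$ and $v$.} Let $R$ be the reflection of $\Gamma_n$ across the central vertical line in Figure~\ref{fig:4WayGluing}. It is a graph automorphism that swaps the two copies of $G_{n-1}$, maps each copy of $G_{n-2}$ to itself by reversing its boundary points, and interchanges $u$ and $v$. One must check that each copy of $G_{n-2}$ admits the automorphism swapping its two boundary points; this is clear from the recursive construction in Figure~\ref{fig:G_nGluing}, whose symmetry is preserved inductively. Because $\Delta(\Gamma_n)$ commutes with $R$, the function $f\circ R$ is again an eigenfunction for $\lambda$. By simplicity, $f\circ R=cf$ for some scalar $c$. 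Since $R^2=\mathrm{id}$, we get $c^2=1$, so $c=\pm1$. Evaluating at $u$ gives $f(v)=f(Ru)=cf(u)$, and therefore $f(u)=\pm f(v)$.

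The only real point needing care is verifying that the reflection is genuinely a graph automorphism. This amounts to the symmetry of each $G_m$ under exchange of its boundary points. I would prove this by induction from Figure~\ref{fig:G_nGluing}: swap the two copies of $G_{n-2}$ and apply the inductive automorphism of $G_{n-1}$, which fixes $v$.
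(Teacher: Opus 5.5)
Your proof is correct and is essentially the paper's argument. A linear combination of two eigenfunctions vanishing at a gluing point is ruled out by the $0$-series condition (with Lemma~\ref{lem:NoSingle0}), and simplicity together with the reflection of Figure~\ref{fig:4WayGluing} then forces $f\circ R=\pm f$. Your use of $R^2=\mathrm{id}$ and your inductive check that $R$ is a graph automorphism make the paper's one-line symmetrization step more precise.

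One small remark, which applies equally to the paper's proof: you work at the level of birth, where $f(u)f(v)\neq0$ holds by definition. The lemma is later invoked on $\Gamma_n$ for larger $n$ (e.g.\ in Theorem~\ref{thm:FullFactorization}). There one must also note that an eigenfunction vanishing at $u$ and $v$ would make $\lambda$ a Dirichlet eigenvalue of some $G_j$, and hence a $2$-series eigenvalue.
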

\begin{proof}
If there are two distinct eigenfunctions with eigenvalue $\lambda$, each of which are non-zero at both gluing points, then there is a linear combination that vanishes at one of the gluing points and hence at both gluing points by Lemma~\ref{lem:NoSingle0}. Therefore $\lambda$ is from the 2-series. It follows that 0-series eigenvalues are simple.

Suppose  $f$ is an eigenfunction for a 0-series eigenvalue. Symmetrizing $f$ by reflection through the center symmetry line of Figure~\ref{fig:4WayGluing} cannot produce a different eigenfunction because there is only one, so the result must equal $f$ or be the zero function. In the former case $f$ is symmetric under the reflection and in the latter antisymmetric under the reflection.
\end{proof}

\begin{lemma}\label{lem:2seriesareDirevals}
$\lambda$ is in the 2-series and born at level $n$ if and only if $\lambda\in\Dirspec(\Delta(G_{n-2}))$ and is born at level $n-2$ for the latter spectrum.
\end{lemma}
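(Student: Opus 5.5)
Both directions go through the decomposition of $\Gamma_n$ in Figure~\ref{fig:4WayGluing} into two copies of $G_{n-1}$ and two copies of $G_{n-2}$ glued at $u$ and $v$, combined with Lemma~\ref{lem:Dirspectospec} and Theorem~\ref{thm:Gnefntheorem}.

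\emph{The reverse direction.} Suppose $\lambda\in\Dirspec(\Delta(G_{n-2}))$ is born at level $n-2$. Lemma~\ref{lem:Dirspectospec} with $m=n-2$ gives an eigenfunction of $\Delta(\Gamma_n)$ vanishing at $u$ and $v$. It then suffices to show $\lambda\notin\spec(\Delta(\Gamma_k))$ for $k<n$; being born at level $n$ and having such an eigenfunction puts $\lambda$ in the 2-series. By Lemma~\ref{lem:EigenvaluesPersist} it is enough to show $\lambda\notin\spec(\Delta(\Gamma_{n-1}))$.

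Suppose instead $g$ is an eigenfunction on $\Gamma_{n-1}$, and use Figure~\ref{fig:2WayGluing}: $\Gamma_{n-1}$ is $G_{n-1}$ and $G_{n-2}$ glued at the single point $u$.
\begin{enumerate}[(a)]
\item If $g(u)\neq 0$, the restriction to $G_{n-2}$ has equal nonzero boundary values. It solves the eigen-equation in the interior with $\lambda$ born at level $n-2$ for $G_{n-2}$. Theorem~\ref{thm:Gnefntheorem}\eqref{item:notGnefn1} forces boundary values of opposite sign, a contradiction.
\item If $g(u)=0$, the restriction to $G_{n-2}$ is a multiple of $h_{n-2}$, whose two Neumann derivatives at $u$ are equal. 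The restriction to $G_{n-1}$ is Dirichlet with $m=n-2$, so $n-m=1$. By \eqref{item:Gnefnn-modd} it is Dirichlet–Neumann, and there are none for $m=n-1$. More carefully, $\lambda$ born at level $n-2$ with $n-1-m=1$ odd makes it Dirichlet–Neumann, and I expect such functions to vanish for this case, so the restriction contributes zero derivative. The Laplacian at $u$ then equals twice the Neumann derivative of $c\,h_{n-2}$, which must vanish, so $c=0$.
\item With $c=0$, $g$ is supported on $G_{n-1}$ and is Dirichlet–Neumann there. If nonzero, that requires checking that $n-1-m=1$ admits none. Theorem~\ref{thm:Gnefntheorem} says there are none "for $m=n-1$"; I would reinterpret this, or argue directly via \eqref{item:DNefns}, which needs $m<(n-1)-2$. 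Hence $g=0$.
\end{enumerate}

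\emph{The forward direction.} Suppose $\lambda$ is in the 2-series and born at level $n$, with eigenfunction $f$ satisfying $f(u)=f(v)=0$. Its restrictions to the four pieces are Dirichlet eigenfunctions, so $\lambda$ lies in $\Dirspec(\Delta(G_{n-1}))$ or $\Dirspec(\Delta(G_{n-2}))$. Let $m$ be the level of birth of $\lambda$ in the $G$-spectra.
\begin{enumerate}[(a)]
\item If $m\le n-3$, Lemma~\ref{lem:Dirspectospec} places $\lambda$ in $\spec(\Delta(\Gamma_{m+2}))$ with $m+2<n$, contradicting birth at level $n$.
\item If $m=n-1$, the $G_{n-2}$ pieces vanish. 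Each $G_{n-1}$ piece is a multiple of $h_{n-1}$, with equal nonzero Neumann derivatives at its two boundary points, one at $u$ and one at $v$. Summing at $u$ and $v$ gives $a+b=0$ at both points. Symmetry (antisymmetric combination) then gives a valid eigenfunction, so I must rule this out differently.

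The antisymmetric function $h_{n-1}\oplus(-h_{n-1})$ is a genuine eigenfunction on $\Gamma_n$ vanishing at $u$ and $v$. This suggests $m=n-1$ is itself allowed, which would contradict the statement unless $\lambda$ already appears in $\Gamma_{n-1}$. Indeed, in $\Gamma_{n-1}=G_{n-1}\cup G_{n-2}$, a function equal to $h_{n-1}$ on $G_{n-1}$ plus a suitable non-Dirichlet solution on $G_{n-2}$ can cancel the derivative at $u$. I would verify this using the fact that $\lambda\notin\Dirspec(\Delta(G_{n-2}))$, which lets one solve for arbitrary boundary value. This excludes $m=n-1$ by contradiction with birth at level $n$.
\end{enumerate}
The remaining possibility is $m=n-2$, which is the claim.

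The main obstacle is this $m=n-1$ exclusion, which needs a matching Dirichlet-to-Neumann computation on $G_{n-2}$. A secondary check is the $m=n-2$ Dirichlet–Neumann vanishing on $G_{n-1}$ used in the reverse direction.
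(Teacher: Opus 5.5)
Your reverse direction is correct, and slightly more direct than the paper's. You show $\lambda\notin\spec(\Delta(\Gamma_{n-1}))$ and invoke Lemma~\ref{lem:EigenvaluesPersist}, which rules out every earlier birth, including a $0$-series one. The paper instead re-runs its forward argument at a putative earlier level. The hedge in your (c) is unnecessary: Theorem~\ref{thm:Gnefntheorem}~\eqref{item:Gnefnn-modd} applied to $G_{n-1}$ says exactly that there are no Dirichlet eigenfunctions with eigenvalue born at level $(n-1)-1=n-2$.

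The gap is in the forward direction, case $m=n-1$, and it comes from misreading Figure~\ref{fig:4WayGluing}. There each copy of $G_{n-1}$ has \emph{both} of its boundary points glued to the same vertex: the left copy at $u$, the right copy at $v$. It is the two copies of $G_{n-2}$ that run from $u$ to $v$. You used the correct picture for $\Gamma_{n-1}$ in your reverse direction. Take $f=a h_{n-1}$ and $f=b h_{n-1}$ on the two copies of $G_{n-1}$, and $f\equiv0$ on the copies of $G_{n-2}$. Then $\Delta f(u)=2a\,\partial h_{n-1}$ and $\Delta f(v)=2b\,\partial h_{n-1}$, where $\partial h_{n-1}\neq0$ is the common Neumann derivative. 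Since $\lambda f(u)=\lambda f(v)=0$, this forces $a=b=0$, so $f\equiv0$, a contradiction. This is exactly how the paper excludes $m=n-1$. Your ``antisymmetric eigenfunction'' does not exist. For $n=2$, let $w_1,w_2$ be the interior vertices of the two copies of $G_1$. Your function has $f(u)=f(v)=0$, $f(w_1)=1$, $f(w_2)=-1$, and $\Delta f(u)=-2\neq0$. Consistently, $P_2(2)=16\neq0$.

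Your proposed repair cannot work either, because what it tries to prove is false. In $\Gamma_{n-1}$, putting $h_{n-1}$ on $G_{n-1}$ forces $g(u)=0$. Both boundary points of the $G_{n-2}$ piece sit at $u$, so that piece has zero boundary data. It therefore vanishes, because $\lambda\notin\Dirspec(\Delta(G_{n-2}))$, and nothing is left to cancel $2\,\partial h_{n-1}$ at $u$. This is the same computation as your own reverse-direction case (b). Combine it with Theorem~\ref{thm:Gnefntheorem}~\eqref{item:notGnefn1} for the case $g(u)\neq0$. The result is that $\lambda\notin\spec(\Delta(\Gamma_{n-1}))$ whenever $\lambda$ is born at level $n-1$ in the Dirichlet spectrum; for example, $2\notin\spec(\Delta(\Gamma_1))=\{0,4\}$. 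Replace all of your case (b) with the direct Neumann-derivative computation at $u$ and $v$ above, and the forward direction closes.
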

\begin{proof}
Suppose $\lambda\in\spec(\Delta(\Gamma_n))$ is born at level $n$ in the 2-series and let $f$ denote the eigenfunction with $f(u)=f(v)=0$ in the notation of Figure~\ref{fig:4WayGluing}.  Then the restriction to the graphs $G_{n-1}$ and $G_{n-2}$ in this figure cannot all vanish and at least one is a Dirichlet eigenfunction.  It follows that $\lambda\in\Dirspec(\Delta(G_m))$ is born at level $m$ for some $m\leq n-1$. However Lemma~\ref{lem:Dirspectospec} gives $\lambda\in\spec(\Delta(\Gamma_{m+2}))$ and since $\lambda$ is born at level $n$ in this spectrum we have $m+2\geq n$.  So $m=n-1$ or $m=n-2$.

In order to arrive at a contradiction, suppose $m=n-1$ so the restrictions above are to $G_{n-1}=G_m$ and $G_{n-2}=G_{m-1}$ and both have boundary data $f(u)=f(v)=0$. Since $\lambda$ is born at level $m$ as a Dirichlet eigenvalue we see that the restriction to $G_{m-1}$ is identically zero and hence has zero Neumann derivatives at $u$ and $v$.  At the same time, Theorem~\ref{thm:Gnefntheorem}~\eqref{item:Gnefnsimple} says the restriction to each copy of $G_m$ is a multiple of a specific function $h_m$ which has equal and non-zero Neumann derivatives at its boundary points. At least one of these multiples must be non-zero or $f\equiv0$ and is not an eigenfunction.  Without loss of generality assume the non-zero multiple of $h_m$ is on the copy of $G_m$ where the boundary points are glued at $u$ in Figure~\ref{fig:4WayGluing}. Then the Neumann derivatives sum to give a non-zero contribution to the Laplacian at this point, and since the other contributions are zero because they are from the copies of $G_{m-1}$ where $f\equiv0$ we arrive at the contradiction $0\neq\Delta f(u)=\lambda f(u)=0$.

We have established that if $\lambda\in\spec(\Delta(\Gamma_n))$ is born at level $n$ in the 2-series then $\lambda\in\Dirspec(\Delta(G_{n-2}))$ and is born at level $n-2$.  For the converse, suppose $\lambda\in\Dirspec(\Delta(G_m))$ is born at level $m$. Then Lemma~\ref{lem:Dirspectospec} shows $\lambda\in\spec(\Delta(\Gamma_{m+2}))$ so is born at or before this level. However, if it is born at level $k<m+2$ then the previous argument shows $\lambda\in\Dirspec(\Delta(G_{k-2}))$ with $k-2<m$, contradicting that the level of birth in this spectrum is $m$.
\end{proof}

\begin{corollary}\label{lem:2SeriesSimpleBirth}
If $\lambda$ is in the 2-series and born at level $n$ it is a simple eigenvalue in $\spec(\Delta(\Gamma_n))$.
\end{corollary}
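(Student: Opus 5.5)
We prove that the eigenspace of $\Delta(\Gamma_n)$ for $\lambda$ is one-dimensional. By Lemma~\ref{lem:2seriesareDirevals}, $\lambda\in\Dirspec(\Delta(G_{n-2}))$ and is born at level $m=n-2$ for that spectrum. The plan is to show that every eigenfunction $f$ of $\Delta(\Gamma_n)$ with eigenvalue $\lambda$ is a multiple of the function constructed in Lemma~\ref{lem:Dirspectospec}. We work throughout with the decomposition in Figure~\ref{fig:4WayGluing}, in which $\Gamma_n$ consists of two copies of $G_{n-1}=G_{m+1}$ and two copies of $G_{n-2}=G_m$.

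First we show $f(u)=f(v)=0$. The restriction of $f$ to the copy of $G_n$ in Figure~\ref{fig:2WayGluing} solves the eigenvalue equation off the boundary. Since $\lambda$ is born at level $m=n-2$, Theorem~\ref{thm:Gnefntheorem}\eqref{item:notGnefn3} applies if this restriction is not Dirichlet: it then has equal values at both boundary points, namely $f(u)$, and value $-f(u)$ at $v$. Suppose $f(u)\neq 0$. By Lemma~\ref{lem:NoSingle0}, $f(v)\neq0$ as well, and the relation $f(v)=-f(u)$ holds. We now examine the copy of $G_{n-1}=G_{m+1}$ glued at $u$. Its boundary values are both $f(u)$, and by Theorem~\ref{thm:Gnefntheorem}\eqref{item:notGnefn2} any solution there has boundary values forming a combination of two basis functions. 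Each basis function takes the value $1$ at one end and $0$ at the other. Adding to this the analogous constraints at $u$ and $v$ should force an inconsistency in the Laplacian equation at $u$. This is the step we expect to be the main obstacle.

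We would argue the contradiction at $u$ as follows. Write $f|_{G_{m+1}}=f(u)(\phi_1+\phi_2)$, plus possibly a Dirichlet eigenfunction. There are no Dirichlet eigenfunctions here, since $\lambda$ is born at level $m$ and Theorem~\ref{thm:Gnefntheorem}\eqref{item:Gnefnn-modd} gives none for $m=n-1$ relative to $G_{m+1}$. We then compare the Neumann derivatives at $u$ with $\lambda f(u)$. The equation $\Delta f(u)=\lambda f(u)$ must hold both on $\Gamma_n$ and, via the whole copy of $G_n$, on the analogous gluing. Comparing it with the equation at $v$, where the sign of $f$ is reversed, should give $f(u)=0$.

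If this direct route proves messy, an alternative is a dimension count. Every 0-series eigenvalue is simple by Lemma~\ref{lem:0seriessimple}, and a 2-series $\lambda$ born at level $n$ is not in $\spec(\Delta(\Gamma_{n-1}))$. So by Corollary~\ref{cor:Gamma_ntoGamma_n-1}, no eigenfunction is symmetric under the reflection. Each eigenfunction is therefore antisymmetric, giving $f(u)=-f(v)$. Combined with the relation $f(v)=-f(u)$ above, this alone gives no contradiction. However, antisymmetry means that $f$ restricted to one half of the graph determines $f$, which substantially reduces the freedom.

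Once $f(u)=f(v)=0$, each restriction of $f$ to a copy of $G_{m+1}$ or $G_m$ is a Dirichlet eigenfunction. For $G_{m+1}$, Theorem~\ref{thm:Gnefntheorem}\eqref{item:Gnefnn-modd} says there are no such eigenfunctions with $m=(m+1)-1$, so $f$ vanishes on both copies of $G_{m+1}$. On each copy of $G_m$, Theorem~\ref{thm:Gnefntheorem}\eqref{item:Gnefnsimple} forces $f=a h_m$ and $f=b h_m$ respectively. The Laplacian equation at $u$ requires the Neumann derivatives to cancel, and $h_m$ has equal nonzero derivatives at both boundary points, so $a+b=0$. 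Hence $f$ is a multiple of the function from Lemma~\ref{lem:Dirspectospec}, and $\lambda$ is simple.
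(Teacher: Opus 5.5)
Your last paragraph is the paper's proof. You are right that it presupposes $f(u)=f(v)=0$ for \emph{every} $\lambda$-eigenfunction: the definition of the $2$-series only supplies one such eigenfunction, and the paper's proof uses this tacitly.

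The gap is in your argument for that step. Carry out the comparison you describe. On the two copies of $G_{n-1}$ you have $f=f(u)(\phi_1+\phi_2)$ and $f=f(v)(\phi_1+\phi_2)$, with Neumann derivative $d\,f(u)$, resp.\ $d\,f(v)$, at each boundary point. On the two copies of $G_{n-2}$ you have $f=f(u)g+a_ih_{n-2}$. Here $g$ is the reflection-antisymmetric solution with boundary values $1$ at $u$ and $-1$ at $v$ (Neumann derivatives $e$ and $-e$), and $c$ is the common Neumann derivative of $h_{n-2}$. Using $f(v)=-f(u)$, the equations at $u$ and $v$ become
\[
2(d+e)f(u)+(a_1+a_2)c=\lambda f(u),\qquad -2(d+e)f(u)+(a_1+a_2)c=-\lambda f(u).
\]
These give only $a_1+a_2=0$ and $(2d+2e-\lambda)f(u)=0$. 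The sign reversal at $v$ makes the second equation a copy of the first rather than a new constraint. To conclude $f(u)=0$ you need $\lambda\neq 2(d+e)$, and nothing in your argument provides it.

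The symmetry detour does not help either. Under the reflection across the vertical line of Figure~\ref{fig:4WayGluing}, each copy of $G_{n-2}$ is mapped to itself. Also $h_{n-2}$ is symmetric, being the unique Dirichlet eigenfunction and non-zero at the gluing point of $G_{n-2}$. So the eigenfunction of Lemma~\ref{lem:Dirspectospec} is \emph{symmetric}, and ``every eigenfunction is antisymmetric'' is false. This already happens for $n=3$, $\lambda=2$, where $P_2(2)=16\neq0$. The proof of Corollary~\ref{cor:Gamma_ntoGamma_n-1} really uses invariance under the half-turn, and antisymmetry under the half-turn just returns $f(v)=-f(u)$ and $a_1+a_2=0$.

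The missing idea is the one in the proof of Lemma~\ref{lem:2seriesvanishatuandv}; apply it with $m=n$, since its proof does not use the present corollary. Assemble $f|_{G_n}$ from Figure~\ref{fig:2WayGluing} and two copies of $f|_{G_{n-1}}$ according to Figure~\ref{fig:G_nGluing}. The equation at the new gluing point is literally $\Delta f(u)=\lambda f(u)$ in $\Gamma_n$. So you get a solution $F$ of $\Delta F=\lambda F$ on $G_{n+1}\setminus\partial G_{n+1}$, with boundary values $f(u)$ and value $f(u)$ at the gluing point.

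If $f(u)\neq0$, then $F$ is not Dirichlet. Since $\lambda$ is a Dirichlet eigenvalue born at level $n-2$ and $(n+1)-(n-2)=3$ is odd, Theorem~\ref{thm:Gnefntheorem}~\eqref{item:notGnefn4} says such solutions vanish at the gluing point, which is a contradiction. In effect this is the inequality $\lambda\neq2(d+e)$, and it is presumably what your ``analogous gluing'' was reaching for. Once $f(u)=f(v)=0$ is established this way, the rest of your proof is correct and coincides with the paper's.
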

\begin{proof}
Let $f$ be an eigenfunction corresponding to $\lambda$. From Lemma~\ref{lem:2seriesareDirevals}, $\lambda\in\Dirspec(\Delta(G_{n-2}))$ and is born at level $n-2$. Applying Theorem~\ref{thm:Gnefntheorem}~\eqref{item:Gnefnn-modd} we see there is no Dirichlet eigenfunction on $G_{n-1}$ with eigenvalue $\lambda$, so the fact that $f(u)=f(v)=0$ in  Figure~\ref{fig:4WayGluing} ensures $f$ vanishes on the two copies of $G_{n-1}$ in that figure. Moreover, Theorem~\ref{thm:Gnefntheorem}~\eqref{item:Gnefnsimple} says the restriction to each copy of $G_{n-2}$ is a multiple of a specific function $h_{n-2}$ which has equal and non-zero Neumann derivatives at its boundary points. Since the eigenfunction equation says $\Delta f(u)=\lambda f(u)=0$ and the same at $v$, we discover that the Neumann derivatives from the two copies of $G_{n-2}$ must have equal and opposite signs. It follows that, up to a non-zero scalar multiple, $f$ is equal to $h_{n-2}$ on one copy of $G_{n-2}$ and $-h_{n-2}$ on the other copy, so the eigenspace is one-dimensional.
\end{proof}

\begin{lemma}\label{lem:2seriesvanishatuandv}
If $\lambda$ is in the $2$-series and born at level $m$ and $f$ is an eigenfunction with eigenvalue $\lambda$ on $\Gamma_n$ for some $n\geq m$ then decomposing $\Gamma_n$ as in Figure~\ref{fig:4WayGluing} we have $f(u)=f(v)=0$  
\end{lemma}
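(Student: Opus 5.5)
The plan is a contradiction argument by cases on $n-m$, using the boundary-value descriptions in Theorem~\ref{thm:Gnefntheorem}. Suppose $f(u)\neq0$. By Lemma~\ref{lem:NoSingle0} also $f(v)\neq0$. By Lemma~\ref{lem:2seriesareDirevals}, $\lambda\in\Dirspec(\Delta(G_k))$ is born at level $k=m-2$ for that spectrum.

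Decompose $\Gamma_n$ as in Figure~\ref{fig:4WayGluing}. Each copy of $G_{n-1}$ has both of its boundary points glued at one gluing vertex, so its boundary values are $(f(u),f(u))$ or $(f(v),f(v))$. Each copy of $G_{n-2}$ joins $u$ to $v$, so its boundary values are $(f(u),f(v))$. The restriction of $f$ to each piece solves $\Delta g=\lambda g$ off the boundary, and it is not Dirichlet because its boundary data is non-zero. So part~\eqref{item:Gnnonefn} of Theorem~\ref{thm:Gnefntheorem} applies to the pieces $G_{n-1}$ and $G_{n-2}$, whose level offsets are $n-m+1$ and $n-m$ relative to $k$.

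\emph{Large $n-m$.} Here item~\eqref{item:notGnefn5} settles things.
\begin{itemize}
\item If $n-m\geq4$ is even, there are no non-Dirichlet solutions on $G_{n-2}$. Hence $f(u)=0$, a contradiction.
\item If $n-m\geq3$ is odd, then $n-m+1\geq4$ is even, and the same item applied to $G_{n-1}$ forces $f(u)=0$.
\end{itemize}

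\emph{Case $n=m$.} By Corollary~\ref{lem:2SeriesSimpleBirth} the eigenspace is one-dimensional. Its eigenfunction, built from $\pm h_{m-2}$ on the copies of $G_{m-2}$, vanishes at $u$ and $v$. Since $f$ is a multiple of it, $f(u)=0$.

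\emph{Cases $n=m+1$ and $n=m+2$.} Here the pieces are $G_{k+1},G_{k+2}$, respectively $G_{k+2},G_{k+3}$, where $k=m-2$. Items~\eqref{item:notGnefn2}, \eqref{item:notGnefn4} and~\eqref{item:notGnefn3} allow non-zero boundary data, so values alone give no contradiction. I will instead use the eigenfunction equation at $u$: the sum of the Neumann derivatives of the four pieces at $u$ must equal $\lambda f(u)$.
\begin{itemize}
\item On the $G_{k+1}$ or $G_{k+3}$ pieces I will write the restriction in the two-dimensional basis of item~\eqref{item:notGnefn2} or~\eqref{item:notGnefn4}. Each basis function is $1$ at one end with non-zero Neumann derivative $c$, and is Dirichlet--Neumann at the other end.
\item On the $G_{k+2}$ piece the solution is the one of item~\eqref{item:notGnefn3}, with equal boundary values. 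Its Neumann derivative at the boundary is computable from the structure of $G_{k+2}$ as $G_{k+1}$ glued with two copies of $G_k$. That structure reduces everything to $c$ and to $\lambda$ acting on $h_k$.
\end{itemize}
Writing these balance equations at $u$ and at $v$ should give a homogeneous linear system in $f(u),f(v)$. I expect it to be non-degenerate, using that $\lambda$ is not a Dirichlet eigenvalue of $G_{k+1}$ and is not an eigenvalue of $\Gamma_{m-1}$.

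The main obstacle is this Neumann-derivative bookkeeping in the two small cases. If the direct computation is awkward, the first thing I will try is symmetrizing and antisymmetrizing $f$ under the reflection of Figure~\ref{fig:4WayGluing}. A symmetric part non-zero at $u$ descends, via Corollary~\ref{cor:Gamma_ntoGamma_n-1}, to an eigenfunction on $\Gamma_{n-1}$ that is non-zero at its gluing point. For $n=m+1$ this contradicts $\lambda$ being born at level $m$. For $n=m+2$ it reduces the symmetric case to the case $n=m+1$. The antisymmetric part, with $f(u)=-f(v)$, still has to be handled by the Neumann balance at $u$.
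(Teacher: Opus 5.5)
Your setup and the case $n-m\geq 3$ are correct: item~\eqref{item:notGnefn5} of Theorem~\ref{thm:Gnefntheorem} applies to a copy of $G_{n-2}$ or of $G_{n-1}$. The cases $n-m\in\{0,1,2\}$, however, are not actually established.

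\emph{The case $n=m$.} Here you rely on Corollary~\ref{lem:2SeriesSimpleBirth}. Its proof uses ``the fact that $f(u)=f(v)=0$'' for an \emph{arbitrary} eigenfunction $f$, and that is exactly the $n=m$ instance of the lemma you are proving. On its own the corollary only shows that the eigenfunctions vanishing at $u,v$ form a line, so this step is circular.

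\emph{The cases $n=m+1,m+2$.} You only expect the Neumann balance system to be non-degenerate, and Theorem~\ref{thm:Gnefntheorem} does not directly decide this. Concretely, at $n=m+1$ the system is $\kappa f(u)=0=\kappa f(v)$ for a single scalar $\kappa=2c+\partial_1\chi+\partial_2\chi-\lambda$. Here $\chi$ is the item~\eqref{item:notGnefn3} solution on $G_m$ with boundary values $1$; the Dirichlet eigenfunction on $G_m$ drops out because its Neumann derivatives are opposite. Nothing in your proposal shows $\kappa\neq0$.

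\emph{The symmetrization fallback.} It does not close the gap. Descending the symmetric part from $\Gamma_{m+1}$ to $\Gamma_m$ does not contradict the birth level, since $\lambda\in\spec(\Delta(\Gamma_m))$ anyway. It contradicts only the $n=m$ case, which is the circular one. The antisymmetric part is left open.

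The missing idea, which is the paper's, is to raise the level offset instead of balancing derivatives. Restrict $f$ to the copies of $G_n$ and $G_{n-1}$ in Figure~\ref{fig:2WayGluing}, duplicate the $G_{n-1}$ piece, and reassemble according to Figure~\ref{fig:G_nGluing}, attaching the two copies of $G_{n-1}$ by opposite ends. The central vertex receives the same four Neumann derivatives that $u$ did. So this is a solution on $G_{n+1}\setminus\partial G_{n+1}$ equal to $f(u)$ at both boundary points and at the gluing vertex. Its offset above the Dirichlet birth level $m-2$ is $n-m+3\geq 3$:
\begin{itemize}
\item If the offset is odd, item~\eqref{item:notGnefn4} forces the value at the gluing vertex to be $0$. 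This uses the converse in item~\eqref{item:Gnefnsimple}: Dirichlet eigenfunctions of positive offset vanish at the gluing vertex.
\item If the offset is even, it is at least $4$, and item~\eqref{item:notGnefn5} forces zero boundary data.
\end{itemize}
Either way $f(u)=0$, uniformly in $n\geq m$, with no appeal to simplicity.

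Alternatively, your case analysis can be rescued.
\begin{itemize}
\item At $n=m+1$, suppose $\kappa=0$. Take the function equal to $\phi_u$ on both copies of $G_{m-1}$, where $\phi_u$ is the item~\eqref{item:notGnefn2} basis function with value $1$ at the $u$-end. Set it equal to $\chi$ on the copy of $G_m$ at $u$ and to $0$ on the copy at $v$. This is an eigenfunction with $f(u)=1$, $f(v)=0$, contradicting Lemma~\ref{lem:NoSingle0}.
\item The case $n=m$ then follows by lifting to $\Gamma_{m+1}$ via Lemma~\ref{lem:EigenvaluesPersist} and Remark~\ref{rem:EigenvaluesPersist}.
\item At $n=m+2$, item~\eqref{item:notGnefn3} on the copies of $G_m$ forces $f(u)=f(v)$, so your symmetric descent reduces it to $n=m+1$.
\end{itemize}
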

\begin{proof}
Suppose to the contrary that there is an eigenfunction $f$ on $\Gamma_n$ which does not vanish at the stated gluing points. Using symmetry or Lemma~\ref{lem:NoSingle0} we may assume without loss of generality that $f(u)\neq0$.

Compare the decomposition of $\Gamma_n$ into $G_n$ and $G_{n-1}$ in Figure~\ref{fig:2WayGluing} with that of $G_{n+1}$ into $G_n$ and two copies of $G_{n-1}$ in Figure~\ref{fig:G_nGluing}.  In both cases we identify two boundary points from $G_{n-1}$ and two from $G_n$.  It follows that restricting $f$ from $\Gamma_n$ to  its constituent $G_n$ and $G_{n-1}$ graphs, duplicating the latter, and then assembling these functions according to Figure~\ref{fig:G_nGluing} gives a solution of $\Delta f= \lambda f$ on $G_{n+1}\setminus\partial G_{n+1}$ with both boundary values equal to $f(u)$, where $u$ is the gluing point in Figure~\ref{fig:2WayGluing}, and also with $f(v)=f(u)$ where $v$ is as in Figure~\ref{fig:G_nGluing}.  (Note that here $v$ is not the point described in the statement of the lemma we are proving.)

Now we use the hypothesis that $\lambda$ is in the $2$-series and born at level $m$  and Lemma~\ref{lem:2seriesareDirevals} to see that $\lambda\in\Dirspec(\Delta(G_{m-2}))$.  We are then in the setting of Theorem~\ref{thm:Gnefntheorem}, with a solution of the eigenfunction equation at non-boundary points of $G_{n+1}\setminus\partial G_{n+1}$, the eigenvalue from the Dirichlet spectrum born at level $m-2$ and non-zero boundary data.  Since $n+1-(m-2)=n-m+3$ and $n\geq m$ we see that we are in one of two cases, both of which lead to a contradiction.  Specifically, if $n-m+3\geq3$ is odd we are in case~\eqref{item:notGnefn4} of the theorem, but here $f(v)=0$ and our $f(v)=f(u)\neq0$, a contradiction. Also, if $n-m+3$ is even then $n-m+3\geq4$ and we are in case~\eqref{item:notGnefn5}, where there are no functions with non-zero boundary data. 
\end{proof}

\begin{figure}
    \begin{tikzpicture}[scale=2]
        \draw[color=red] (-3,0) .. controls (-3.5,0.5) and (-3.5,-0.5) .. (-3,0);
        \draw[color=red] (-3,0) .. controls (-2.9,0.3) and (-2.6,0.3) .. (-2.5,0);
        \draw[color=red] (-3,0) .. controls (-2.9,-0.3) and (-2.6,-0.3) .. (-2.5,0);

        \draw (-2.5,0) .. controls (-2.5,0.3) and (-2.3,0.5) .. (-2,0.5);
        \draw (-2.5,0) .. controls (-2.5,-0.3) and (-2.3,-0.5) .. (-2,-0.5);
        \draw (-2,0.5) .. controls (-1.7,0.5) and (-1.5,0.3) .. (-1.5,0);
        \draw (-2,-0.5) .. controls (-1.7,-0.5) and (-1.5,-0.3) .. (-1.5,0);
        \draw (-2,0.5) .. controls (-2.5,1) and (-1.5,1) .. (-2,0.5);
        \draw (-2,-0.5) .. controls (-2.5,-1) and (-1.5,-1) .. (-2,-0.5);
        
        \draw[color=red] (-1.5,0) .. controls (-1.5,0.3) and (-1.3,0.5) .. (-1,0.5);
        \draw[color=red] (-1.5,0) .. controls (-1.5,-0.3) and (-1.3,-0.5) .. (-1,-0.5);
        \draw[color=red] (-1,0.5) .. controls (-0.7,0.5) and (-0.5,0.3) .. (-0.5,0);
        \draw[color=red] (-1,-0.5) .. controls (-0.7,-0.5) and (-0.5,-0.3) .. (-0.5,0);
        \draw[color=red] (-1,0.5) .. controls (-1.5,1) and (-0.5,1) .. (-1,0.5);
        \draw[color=red] (-1,-0.5) .. controls (-1.5,-1) and (-0.5,-1) .. (-1,-0.5);
        
        \draw (-0.5,0) .. controls (-0.5,0.2) and (-0.4,0.3) .. (-0.3,0.3);
        \draw (-0.5,0) .. controls (-0.5,-0.2) and (-0.4,-0.3) .. (-0.3,-0.3);
        \draw (0.5,0) .. controls (0.5,0.2) and (0.4,0.3) .. (0.3,0.3);
        \draw (0.5,0) .. controls (0.5,-0.2) and (0.4,-0.3) .. (0.3,-0.3);
        \draw (-0.3,0.3) .. controls (-0.3,0.4) and (-0.2,0.5) .. (0,0.5);
        \draw (-0.3,-0.3) .. controls (-0.3,-0.4) and (-0.2,-0.5) .. (0,-0.5);
        \draw (0.3,0.3) .. controls (0.3,0.4) and (0.2,0.5) .. (0,0.5);
        \draw (0.3,-0.3) .. controls (0.3,-0.4) and (0.2,-0.5) .. (0,-0.5);

        \draw (-0.3,0.3) .. controls (-0.9,0.3) and (-0.3,0.9) .. (-0.3,0.3);
        \draw (-0.3,-0.3) .. controls (-0.9,-0.3) and (-0.3,-0.9) .. (-0.3,-0.3);
        \draw (0.3,0.3) .. controls (0.9,0.3) and (0.3,0.9) .. (0.3,0.3);
        \draw (0.3,-0.3) .. controls (0.9,-0.3) and (0.3,-0.9) .. (0.3,-0.3);

        \draw[color=red] (0,0.5) .. controls (-0.2,0.6) and (-0.2,0.9) .. (0,1);
        \draw[color=red] (0,0.5) .. controls (0.2,0.6) and (0.2,0.9) .. (0,1);
        \draw[color=red] (0,1) .. controls (-0.5,1.5) and (0.5,1.5) .. (0,1);
        \draw[color=red] (0,-0.5) .. controls (-0.2,-0.6) and (-0.2,-0.9) .. (0,-1);
        \draw[color=red] (0,-0.5) .. controls (0.2,-0.6) and (0.2,-0.9) .. (0,-1);
        \draw[color=red] (0,-1) .. controls (-0.5,-1.5) and (0.5,-1.5) .. (0,-1);

        \draw[color=red] (1.5,0) .. controls (1.5,0.3) and (1.3,0.5) .. (1,0.5);
        \draw[color=red] (1.5,0) .. controls (1.5,-0.3) and (1.3,-0.5) .. (1,-0.5);
        \draw[color=red] (1,0.5) .. controls (0.7,0.5) and (0.5,0.3) .. (0.5,0);
        \draw[color=red] (1,-0.5) .. controls (0.7,-0.5) and (0.5,-0.3) .. (0.5,0);
        \draw[color=red] (1,0.5) .. controls (1.5,1) and (0.5,1) .. (1,0.5);
        \draw[color=red] (1,-0.5) .. controls (1.5,-1) and (0.5,-1) .. (1,-0.5); 

        \draw (2.5,0) .. controls (2.5,0.3) and (2.3,0.5) .. (2,0.5);
        \draw (2.5,0) .. controls (2.5,-0.3) and (2.3,-0.5) .. (2,-0.5);
        \draw (2,0.5) .. controls (1.7,0.5) and (1.5,0.3) .. (1.5,0);
        \draw (2,-0.5) .. controls (1.7,-0.5) and (1.5,-0.3) .. (1.5,0);
        \draw (2,0.5) .. controls (2.5,1) and (1.5,1) .. (2,0.5);
        \draw (2,-0.5) .. controls (2.5,-1) and (1.5,-1) .. (2,-0.5);

        \draw[color=red] (3,0) .. controls (3.5,0.5) and (3.5,-0.5) .. (3,0);
        \draw[color=red] (3,0) .. controls (2.9,0.3) and (2.6,0.3) .. (2.5,0);
        \draw[color=red] (3,0) .. controls (2.9,-0.3) and (2.6,-0.3) .. (2.5,0);

        \filldraw [red] (-3,0) circle (2pt);
        \filldraw [cyan] (-2.5,0) circle (2pt);
        \filldraw [black] (-2,-0.5) circle (2pt);
        \filldraw [black] (-2,0.5) circle (2pt);
        \filldraw [cyan] (-1.5,0) circle (2pt);
        \filldraw [red] (-1,0.5) circle (2pt);
        \filldraw [red] (-1,-0.5) circle (2pt);
        \filldraw [cyan] (-0.5,0) circle (2pt);
        \filldraw [black] (-0.3,0.3) circle (2pt);
        \filldraw [cyan] (0,0.5) circle (2pt);
        \filldraw [red] (0,1) circle (2pt);
        \filldraw [black] (0.3,0.3) circle (2pt);
        \filldraw [black] (-0.3,-0.3) circle (2pt);
        \filldraw [cyan] (0,-0.5) circle (2pt);
        \filldraw [red] (0,-1) circle (2pt);
        \filldraw [black] (0.3,-0.3) circle (2pt);
        \filldraw [cyan] (0.5,0) circle (2pt);
        \filldraw [cyan] (1.5,0) circle (2pt);
        \filldraw [red] (1,0.5) circle (2pt);
        \filldraw [red] (1,-0.5) circle (2pt);
        \filldraw [black] (2,0.5) circle (2pt);
        \filldraw [black] (2,-0.5) circle (2pt);
        \filldraw [cyan] (2.5,0) circle (2pt);
        \filldraw [red] (3,0) circle (2pt);

        \draw (-3.7,0) node {\(G_{m-1}\)};
        \draw (-2,1) node {\(G_{m-2}\)};
        \draw (-2,-1) node {\(G_{m-2}\)};
        \draw (-1,1) node {\(G_{m-1}\)};
        \draw (-1,-1) node {\(G_{m-1}\)};
        \draw (-0.45,0.75) node {\(G_{m-2}\)};
        \draw (-0.45,-0.75) node {\(G_{m-2}\)};
        \draw (0.45,0.75) node {\(G_{m-2}\)};
        \draw (0.45,-0.75) node {\(G_{m-2}\)};
        \draw (0,1.5) node {\(G_{m-1}\)};
        \draw (0,-1.5) node {\(G_{m-1}\)};
        \draw (1,1) node {\(G_{m-1}\)};
        \draw (1,-1) node {\(G_{m-1}\)};
        \draw (2,1) node {\(G_{m-2}\)};
        \draw (2,-1) node {\(G_{m-2}\)};
        \draw (3.7,0) node {\(G_{m-1}\)};
    \end{tikzpicture}
    \caption{\(\Gamma_{m+2}\) as a gluing of copies of $G_{m-1}$ (in red) and of $G_{m-2}$ (in black) with $(m-1,m-2)$-gluing points in blue.}
    \label{fig:Gk+2Deconstructed}
\end{figure}

In what follows we determine a basis for the $2$-series eigenfunctions on $\Gamma_n$ that are born at level $m\leq n-2$.  To do so, we first decompose $\Gamma_n$ into two copies of each of $G_{n-1}$ and $G_{n-2}$ as in Figure~\ref{fig:4WayGluing}, in which we note that $u$ and $v$ are each obtained by identifying two boundary points of copies of $G_{n-1}$ and two boundary points of copies of $G_{n-2}$. Let us call these $(n-1,n-2)$-gluing points.  If $m=n-2$ we stop; otherwise we apply the decomposition of Figure~\ref{fig:G_nGluing} to decompose $G_{n-1}$ into $G_{n-2}$ and two copies of $G_{n-3}$.  The result is a graph consisting of copies of $G_{n-2}$ and $G_{n-3}$, with the vertices common to both being $(n-2,n-3)$-gluing points.  We may repeat recursively for $k=1,\dotsc,n-m$, at each stage decomposing $G_{n-k}$ into $G_{n-k-1}$ and two copies of $G_{n-k-2}$,  so as to obtain a graph made of copies of $G_{m-1}$ and $G_{m-2}$, meeting at $(m-1,m-2)$-gluing points.  Figure~\ref{fig:Gk+2Deconstructed} shows this decomposition with $n=m+2$.

\begin{lemma}\label{lem:vanishatgluingofGkGk+1}
In the preceding decomposition, any $2$-series eigenfunction on $\Gamma_n$ born at level $m\leq n$ vanishes at the $(m-1,m-2)$-gluing points. It therefore vanishes identically on the copies of $G_{m-1}$ in the decomposition and its restriction to the level $m-2$ vertices is a scalar multiple of the function $h_{m-2}$ from Theorem~\ref{thm:Gnefntheorem}~\eqref{item:Gnefnsimple} on each of the copies of $G_{m-2}$ in the decomposition.
\end{lemma}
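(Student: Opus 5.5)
We will prove the vanishing at the $(m-1,m-2)$-gluing points by working down through the nested decomposition. The main tool is the same device used in Lemma~\ref{lem:2seriesvanishatuandv}: a copy of $G_{k}$, together with the two copies of $G_{k-1}$ that share its boundary points at a $(k,k-1)$-gluing point, can be reassembled into a solution on $G_{k+1}$. We will then apply Theorem~\ref{thm:Gnefntheorem}, using throughout that $\lambda\in\Dirspec(\Delta(G_{m-2}))$ is born at level $m-2$ (Lemma~\ref{lem:2seriesareDirevals}).

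We argue by induction on the stage $k$ of the decomposition. The claim is that $f$ vanishes at all $(n-k,n-k-1)$-gluing points for $k=1,\dots,n-m+1$. The base case $k=1$ is Lemma~\ref{lem:2seriesvanishatuandv}, which covers $u$ and $v$.

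For the inductive step, the boundary points of every copy of $G_{n-k}$ and of $G_{n-k-1}$ at stage $k$ are $(n-k,n-k-1)$-gluing points, so by the induction hypothesis $f$ restricts to Dirichlet solutions on these copies. Consider a copy of $G_{n-k}$ with $j:=n-k\geq m$, and decompose it as in Figure~\ref{fig:G_nGluing} with central point $w$, which is an $(j-1,j-2)$-gluing point. The restriction is a Dirichlet eigenfunction on $G_j$, and $j-(m-2)\geq2$. Theorem~\ref{thm:Gnefntheorem}~\eqref{item:Gnefnsimple} says that if $w$ carried a nonzero value then $j=m-2$, which is impossible. Hence $f(w)=0$. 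Every new gluing point created at stage $k+1$ arises in this way, so the induction proceeds until copies of $G_{m-1}$ and $G_{m-2}$ remain with all junctions zero. The main obstacle is to make sure that every gluing point at each stage really is the central point $v$ of some $G_j$ with $j\geq m-1$, so that Theorem~\ref{thm:Gnefntheorem}~\eqref{item:Gnefnsimple} applies. For $j=m-1$ the same argument works, since $m-1\neq m-2$. We will check this carefully against the recursive description.

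Once all $(m-1,m-2)$-gluing points are known to vanish, the restriction of $f$ to each copy of $G_{m-1}$ is a Dirichlet eigenfunction with $\lambda$ born at level $m-2$. By Theorem~\ref{thm:Gnefntheorem}~\eqref{item:Gnefnn-modd}, with $n-m$ replaced by $1$, no such nonzero function exists, so $f\equiv0$ there. On each copy of $G_{m-2}$ the restriction is a Dirichlet eigenfunction at its level of birth. By the simplicity in Theorem~\ref{thm:Gnefntheorem}~\eqref{item:Gnefnsimple} it is therefore a scalar multiple of $h_{m-2}$, which completes the proof.
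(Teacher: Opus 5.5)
Your proposal is correct and follows the paper's proof essentially step for step. It inducts down the decomposition starting from Lemma~\ref{lem:2seriesvanishatuandv}, shows that the central point of each copy of $G_{n-k}$ (with $n-k\geq m$) carries the value zero, kills the copies of $G_{m-1}$ via Theorem~\ref{thm:Gnefntheorem}~\eqref{item:Gnefnn-modd}, and uses the simplicity in \eqref{item:Gnefnsimple} on the copies of $G_{m-2}$. The only difference is cosmetic: you obtain $f(w)=0$ from the converse clause of Theorem~\ref{thm:Gnefntheorem}~\eqref{item:Gnefnsimple}, whereas the paper notes that every eigenfunction in items \eqref{item:Gnefnn-modd}--\eqref{item:DNefns} vanishes at the gluing point, and these amount to the same thing.
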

\begin{proof}
We induct on the index of the gluing points, with the base case of the $(n-1,n-2)$-gluing points (i.e. $u$ and $v$ from Figure~\ref{fig:4WayGluing}) being Lemma~\ref{lem:2seriesvanishatuandv}. Suppose the eigenfunction $f$ vanishes at the $(n-k,n-k-1)$-gluing points for some $1\leq k\leq n-m$ and consider the restriction of $f$ to any copy of $G_{n-k}$ in the decomposition described above. It will be convenient to denote the gluing point (from Figure~\ref{fig:G_nGluing}) of $G_{n-k}$ by $v'$.  Because $f$ vanishes at the $(n-k,n-k-1)$-gluing points, its restriction to $G_{n-k}$ is a Dirichlet eigenfunction with eigenvalue that was born for the $2$-series at level $m$ and thus born for the Dirichlet spectrum on the $G_j$ graphs at level $m'=m-2$ by Lemma~\ref{lem:2seriesareDirevals}. These functions are described in Theorem~\ref{thm:Gnefntheorem}.  Specifically, we have a Dirichlet eigenfunction on $G_{n'}$ with $n'=n-k$, it has eigenvalue born for the Dirichlet spectrum of $G_{m'}$ with $m'=m-2$ and our condition on $k$ gives $m'\leq n'-2$, so the relevant functions are in~\eqref{item:Gnefnn-modd}--\eqref{item:DNefns} of Theorem~\ref{thm:Gnefntheorem}.  The important consideration for the present argument is that all functions described in those parts of the theorem vanish at the gluing point $v'$.  It follows that when we decompose $G_{n-k}$ into a copy of $G_{n-k-1}$ and two of $G_{n-k-2}$ following Figure~\ref{fig:G_nGluing} the function $f$ vanishes at the resulting $(n-k-1,n-k-2)$-gluing point, closing the induction.

Since $f$ vanishes at the $(m-1,m-2)$-gluing points its restriction to each copy of $G_{m-1}$ in the decomposition has Dirichlet boundary data and satisfies the eigenfunction equation for an eigenvalue born in $\Dirspec(\Delta(G_{m-2}))$. Theorem~\ref{thm:Gnefntheorem}~\eqref{item:Gnefnn-modd} says there are no eigenfunctions like this, so $f$ must vanish identically on these subgraphs.  Similarly, the restriction of $f$ to each copy of $G_{m-2}$ in the decomposition is a Dirichlet eigenfunction, and since  Theorem~\ref{thm:Gnefntheorem}~\eqref{item:Gnefnsimple}  says there is a one-dimensional space of these with basis $h_{m-2}$ the proof is complete.
\end{proof}

A description of a basis for the $2$-series eigenfunctions on $\Gamma_n$ born at level $m\leq n$ follows without difficulty.  Notice that the eigenfunctions in this basis are obtained by connecting the basis elements from Theorem~\ref{thm:Gnefntheorem}~\eqref{item:DnotNefn} into loops, so these basis elements reflect the topological structure of the graph. 

\begin{theorem}\label{thm:2seriesefns}
Fix $\lambda$ a $2$-series eigenvalue born at level $m\leq n$ and recall from  Theorem~\ref{thm:Gnefntheorem}~\eqref{item:Gnefnsimple} that there is a unique Dirichlet eigenfunction $h_{m-2}$ on $G_{m-2}$ with eigenvalue $\lambda$.  Decompose $\Gamma_n$ into copies of $G_{m-1}$ and $G_{m-2}$ and consider a loop in $\Gamma_n$ consisting of an even number of copies of $G_{m-2}$, each copy having one boundary point identified with the next copy. Construct a function $f$ on $\Gamma_n$ by taking it equal to $\pm h_{m-2}$ on each copy of $G_{m-2}$ in the loop, with the signs arranged in an alternating manner, and setting it to be zero on the rest of $\Gamma_n$.  This is a $2$-series eigenfunction on $\Gamma_n$ with eigenvalue $\lambda$. Moreover, the collection of such eigenfunctions is a basis for the $\lambda$-eigenspace on $\Gamma_n$, so the dimension of the eigenspace is the number of loops of this type.
\end{theorem}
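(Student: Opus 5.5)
We verify that the loop functions are eigenfunctions, and then use Lemma~\ref{lem:vanishatgluingofGkGk+1} to reduce the eigenspace to a cycle space of an auxiliary graph.

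\textbf{The loop functions are eigenfunctions.} Decompose $\Gamma_n$ into copies of $G_{m-1}$ and $G_{m-2}$ meeting at $(m-1,m-2)$-gluing points, as in the construction preceding Lemma~\ref{lem:vanishatgluingofGkGk+1}. Let $f$ be the alternating-sign function attached to a loop of evenly many copies of $G_{m-2}$. At every vertex interior to a copy of $G_{m-2}$ or $G_{m-1}$, the equation $\Delta f=\lambda f$ holds, because $h_{m-2}$ is a Dirichlet eigenfunction and $f\equiv0$ elsewhere. At a gluing point, $f=0$. The Laplacian there is the sum of the Neumann derivatives of the pieces meeting at that point.
\begin{itemize}
\item Pieces on which $f\equiv0$ contribute $0$.
\item By Theorem~\ref{thm:Gnefntheorem}~\eqref{item:Gnefnsimple}, $h_{m-2}$ has equal nonzero Neumann derivatives $c$ at both of its boundary points.
\item At a point where consecutive loop pieces meet, the contributions are therefore $c$ and $-c$, and they cancel.
\end{itemize}
The evenness of the loop is exactly what makes the alternating signs consistent around the loop. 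So $\Delta f=0=\lambda f$ at every gluing point, and $f$ is an eigenfunction. It vanishes at $u$ and $v$, since these are gluing points at coarser levels of the decomposition. Moreover $\lambda$ is a $2$-series eigenvalue born at level $m$.

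\textbf{Reduction to a combinatorial graph.} By Lemma~\ref{lem:vanishatgluingofGkGk+1}, any eigenfunction $g$ with eigenvalue $\lambda$ on $\Gamma_n$ has the following form:
\begin{itemize}
\item it vanishes on every copy of $G_{m-1}$;
\item on the $j$-th copy of $G_{m-2}$ it equals $a_j h_{m-2}$ for some scalar $a_j$.
\end{itemize}
That lemma is stated for $2$-series eigenfunctions, but it applies to every $\lambda$-eigenfunction, because its proof only uses the eigenvalue.

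Form a multigraph $T$ as follows: its vertices are the $(m-1,m-2)$-gluing points, and each copy of $G_{m-2}$ is an edge joining its two boundary points. The eigenfunction equation at a gluing point $p$ reads $c\sum_{j\ni p} a_j=0$. Hence $g\mapsto(a_j)$ is an injective linear map from the eigenspace onto the space of edge weights whose sum at every vertex of $T$ is zero, and every such weight vector gives an eigenfunction. The eigenspace is therefore isomorphic to this ``unsigned flow'' space of $T$.

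\textbf{The main obstacle: identifying the flow space with the loops.} We must show that this flow space is spanned, and in fact has a basis, given by the alternating even loops. The plan is to read off the structure of $T$ from the recursive decomposition.
\begin{itemize}
\item By construction every gluing point has exactly two $G_{m-2}$ pieces attached to it. Indeed, each $(k,k-1)$-gluing point joins two copies of $G_k$ and two of $G_{k-1}$, and the refinement leaves each gluing point with two $G_{m-1}$ and two $G_{m-2}$ boundary incidences.
\item So $T$ is $2$-regular, that is, a disjoint union of cycles; these cycles are precisely the loops in the statement.
\item On a single cycle, the condition $a_j+a_{j+1}=0$ forces alternating values. This has a one-dimensional solution space when the cycle is even, and only the zero solution when it is odd.
\end{itemize}
Thus the eigenspace has a basis given by the even loops, and its dimension is the number of such loops. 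The delicate point is verifying the degree-two claim, using the gluing patterns in Figures~\ref{fig:G_nGluing} and~\ref{fig:4WayGluing}. I would check this carefully by induction on the refinement step, tracking at each stage the incidences of $G_{n-k}$ and $G_{n-k-1}$ pieces at every gluing point.
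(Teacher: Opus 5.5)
Your proposal is correct and follows essentially the same route as the paper: you verify the alternating loop functions via cancelling Neumann derivatives, use Lemma~\ref{lem:vanishatgluingofGkGk+1} to reduce any $\lambda$-eigenfunction to multiples of $h_{m-2}$ on the copies of $G_{m-2}$, and then solve the zero-sum conditions at the gluing points. Your auxiliary multigraph $T$ and its $2$-regularity make explicit what the paper's path-tracing uses implicitly (``the (unique) other copy of $G_{m-2}$ that meets $x$''), namely that exactly two distinct copies of $G_{m-2}$ meet at each gluing point, so the loops are disjoint and the even ones form a basis.
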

\begin{proof}
We first check the construction of a $\lambda$-eigenfunction on the loop.
Observe that arranging the copies of $h_{m-2}$ with alternating signs is possible because the number of copies of $G_{m-2}$ in the loop is even.  Since $h_{m-2}$ satisfies $\Delta f=\lambda f$  on $G_{m-2}\setminus\partial G_{m-2}$ and has Dirichlet boundary data, the construction clearly produces a function satisfying $\Delta f=\lambda f$ at all points except perhaps the $(m-1,m-2)$-gluing points on the loop. A gluing point $x$ of this type is obtained by identifying two boundary points from copies of $G_{m-1}$ and two boundary points from copies of $G_{m-2}$.  The two copies of $G_{m-2}$ are necessarily distinct and lie on the loop, but the two copies of $G_{m-1}$ could coincide (actually being one copy with both boundary points glued at $x$). 
Since $f\equiv0$ on the copy or copies of $G_{m-1}$ these make no contribution to the Laplacian at $x$. Since one copy of $G_{m-2}$  carries the function $h_{m-2}$ and the other carries $-h_{m-2}$, and the Neumann derivatives of $h_{m-2}$ are equal at both boundary points (see Theorem~\ref{thm:Gnefntheorem}~\eqref{item:Gnefnsimple}), we conclude that the sum of Neumann derivatives from the two copies of $G_{m-2}$ is zero, so $\Delta f(x)=0$. It is also the case that $f(x)=0$ because $h_{m-2}$ has Dirichlet boundary conditions and $f$ vanishes on the copies of $G_{m-1}$. Hence $\Delta f(x)=\lambda f(x)$ at all $(m-1,m-2)$-gluing points and  $f$ is an eigenfunction with eigenvalue $\lambda$. Since this set of gluing points includes $u$ and $v$ as in Figure~\ref{fig:4WayGluing} it is a $2$-series eigenfunction.

Now we verify the eigenfunctions corresponding to loops span the $\lambda$ eigenspace.  Suppose $f$ is a $2$-series eigenfunction born at level $m$. By Lemma~\ref{lem:vanishatgluingofGkGk+1} it must consist of scalar multiples of $h_{m-2}$ on copies of $G_{m-2}$ and be zero on the copies of $G_{m-1}$.  Multiplying $f$ by a non-zero constant if necessary, we may take a copy of $G_{m-2}$ on which $f=h_{m-2}$. At a boundary point $x$ of this copy of $G_{m-2}$ we have $f(x)=0$ and $f$ has a non-zero Neumann derivative. The copy or copies of $G_{m-1}$ meeting at $x$ make no contribution to the Laplacian, so in order to have $\Delta f(x)=\lambda f(x)=0$ we must have $f=-h_{m-2}$ on the (unique) other copy of $G_{m-2}$ that meets $x$. Repeating this argument we uniquely construct a path of copies of $G_{m-2}$ on which there is an alternating sequence of copies of $\pm h_{m-2}$, and the only way it can terminate producing an eigenfunction is if it reaches the second boundary point of our original copy of $G_{m-2}$ such that the resulting loop contains an even number of copies of $G_{m-2}$.  Reasoning similarly for each copy of $G_{m-2}$ we conclude that the restriction of $f$ to any even loop of copies of $G_{m-2}$ is a multiple of one of the eigenfunctions described in the statement of the theorem, and  that its restriction to any other copy of $G_{m-2}$ must be zero.  Hence it is in the span of the eigenfunctions we have described.

Finally, observe that the preceding argument shows any copy of $G_{m-2}$ can belong to at most one loop of copies of $G_{m-2}$, so the loops do not intersect and the eigenfunctions we have described are linearly independent.  We conclude that they form a basis for the $\lambda$ eigenspace on $\Gamma_n$.
\end{proof}

\begin{lemma}\label{lem:2SeriesMultiplicity}
If $\lambda \in \spec(\Delta(\Gamma_n))$ is in the 2-series and is born at level $m\leq n$ on $\Gamma_m$, then the multiplicity of $\lambda$ at level $n$ is given by $\sigma_{n-m}$ where $\sigma_k=\frac16 (2^{2+k}+3+(-1)^{k+1})$.
\end{lemma}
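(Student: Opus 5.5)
By Theorem~\ref{thm:2seriesefns}, the multiplicity of $\lambda$ on $\Gamma_n$ equals the number of loops in $\Gamma_n$ that are built from an even number of copies of $G_{m-2}$, each copy glued at a boundary point to the next. These loops are taken with respect to the decomposition of $\Gamma_n$ into copies of $G_{m-1}$ and $G_{m-2}$. So the plan is to count these loops, denoted $L(n,m)$, and show $L(n,m)=\sigma_{n-m}$. Note first that $\sigma_0=\frac16(4+3-1)=1$, which matches Corollary~\ref{lem:2SeriesSimpleBirth}. In $\Gamma_m$, decomposed as in Figure~\ref{fig:4WayGluing}, there is exactly one such loop: the two copies of $G_{m-2}$ joined at $u$ and $v$. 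Also $\sigma_1=\frac16(8+3+1)=2$ and $\sigma_2=\frac16(16+3-1)=3$. One checks that $\sigma_k$ satisfies $\sigma_k=\sigma_{k-1}+2\sigma_{k-2}-1$, which is the same recursion as $\#G_k$ up to an index shift. This suggests counting loops via the self-similar structure of the $G_j$.

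The key step is to count loops inside each $G_j$. Define $\ell_j$ as the number of such loops of copies of $G_{m-2}$ lying entirely inside a copy of $G_j$ for $j\ge m-2$, where the copy is decomposed down to level $(m-1,m-2)$. The loops through the boundary are excluded. From the decomposition in Figure~\ref{fig:G_nGluing}, a loop either lies in one of the three subgraphs $G_{j-1},G_{j-2},G_{j-2}$, or passes through the gluing point $v$. A loop through $v$ must use one of the two copies of $G_{j-2}$ glued there. Following the chain of $G_{m-2}$ copies along the shortest boundary-to-boundary path, the construction of Theorem~\ref{thm:Gnefntheorem}\eqref{item:DnotNefn} shows this path exists precisely when the parity $j-m$ is even. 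The two copies of $G_{j-2}$, each having both endpoints at $v$, then close into one new loop exactly in that parity case. Together with the intrinsic loop count from Theorem~3.11 of~\cite{brzoska} (the Dirichlet--Neumann eigenfunctions of Theorem~\ref{thm:Gnefntheorem}\eqref{item:DNefns}), this gives a recursion of the form
\[
\ell_j=\ell_{j-1}+2\ell_{j-2}+\epsilon_j, \qquad \epsilon_j\in\{0,1\}\ \text{determined by parity}.
\]
The base values are $\ell_{m-2}=\ell_{m-1}=\ell_m=0$, which follows from items~\eqref{item:Gnefnn-modd}--\eqref{item:DnotNefn}. Equivalently, $\ell_j$ is the dimension of the Dirichlet--Neumann eigenspace, and I would take its value from~\cite{brzoska}.

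Next, write $\Gamma_n$ as in Figure~\ref{fig:4WayGluing}: two copies of $G_{n-1}$ and two of $G_{n-2}$ glued at $u$ and $v$. Then
\[
L(n,m)=2\ell_{n-1}+2\ell_{n-2}+c_{n-m},
\]
where $c_{n-m}$ counts the loops passing through $u$ or $v$. By Lemma~\ref{lem:2seriesvanishatuandv} and the parity analysis above, such loops arise from boundary-to-boundary chains in the four pieces. A chain exists in $G_{n-1}$ if and only if $n-1-(m-2)$ is even, and in $G_{n-2}$ if and only if $n-m$ is even, so exactly one pair of pieces contributes. That pair forms a single loop through $u$ and $v$, giving $c=1$. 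Substituting the closed form of $\ell_j$ and using the identity
\[
\sigma_k = 2\ell_{k+m-1}+2\ell_{k+m-2}+1
\]
should then verify $L(n,m)=\sigma_{n-m}$.

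The main obstacle is fixing $\ell_j$ and its parity correction precisely. The cleanest route is to avoid re-deriving it and instead argue by induction on $n$. I would use Lemma~\ref{lem:EigenvaluesPersist} and Corollary~\ref{cor:Gamma_ntoGamma_n-1} to split the $\lambda$-eigenspace on $\Gamma_n$ into the reflection-symmetric part and the antisymmetric part. The symmetric part has dimension $L(n-1,m)$. For the antisymmetric part, I would check that it corresponds to eigenfunctions on a copy of $G_{n-1}$ and $G_{n-2}$ that have Dirichlet--Neumann or Dirichlet-not-Neumann data at $u$ and $v$. Its dimension should come out as $L(n-1,m)-1+2L(n-2,m)-\dots$. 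The target is to reproduce the recursion $\sigma_k=\sigma_{k-1}+2\sigma_{k-2}-1$, and I would confirm the constant term against the small cases $k=1,2$ computed directly from Figure~\ref{fig:Gk+2Deconstructed}.
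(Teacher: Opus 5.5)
Your plan of counting the even loops of Theorem~\ref{thm:2seriesefns} by splitting $\Gamma_n$ as in Figure~\ref{fig:4WayGluing} is viable. It separates loops interior to one of the four pieces from loops through $u$ or $v$. However, two of the counts you write down are wrong, and both come from misreading which pieces have both boundary points glued to the same vertex.

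\emph{The recursion for $\ell_j$.} In Figure~\ref{fig:G_nGluing} it is the copy of $G_{j-1}$ whose two boundary points are both identified with $v$. Each copy of $G_{j-2}$ has only one boundary point at $v$; its other one is a point of $\partial G_j$. So the two copies of $G_{j-2}$ form the open boundary-to-boundary path of $G_j$, not a loop. The extra loop through $v$ in $G_j$ is therefore the boundary-to-boundary chain inside the hanging $G_{j-1}$, and this chain exists when $j-m$ is \emph{odd}, not even. Your parity rule, applied at $j=m$, would predict a loop in $G_m$, contradicting your base value $\ell_m=0$. At $j=m+1$ it gives $\ell_{m+1}=0$, whereas in fact $\ell_{m+1}=1$.

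\emph{The term $c_{n-m}$.} When $n-m$ is odd, the contributing pieces are the two copies of $G_{n-1}$. Each has both boundary points at a single vertex ($u$ or $v$), so they close up separately into two loops. Thus your $c_{n-m}$ equals $2$ for $n-m$ odd, and equals $1$ only for $n-m$ even. Your proposed identity $\sigma_k=2\ell_{k+m-1}+2\ell_{k+m-2}+1$ already fails at $k=1$: the right side is $2\ell_m+2\ell_{m-1}+1=1$, but $\sigma_1=2$. The two loops in $\Gamma_{m+1}$ are the two-copy chains inside the copies of $G_m$, one closed at $u$ and one at $v$.

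Beyond these errors, the argument is never closed. You do not determine $\ell_j$, and you do not check the final identity. In the fallback via reflection-symmetric and antisymmetric eigenfunctions, the dimension of the antisymmetric part is the whole content of the lemma, and it is left as a guess ending in ``$-\dots$''.

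Your first route can be repaired:
\begin{itemize}
\item use the corrected parity for the extra loop in $G_j$ and the corrected $c_{n-m}$;
\item take $\ell_j$ to be the Dirichlet--Neumann multiplicity, namely $S_{j-m+2}$ from Theorem~\ref{thm:Brzfmlaforcn}, minus one when $j-m$ is even;
\item apply the identity $S_p+S_{p-1}=\sigma_{p-2}$ from the proof of Proposition~\ref{prop:Pnusingcn}.
\end{itemize}
This gives a loop count of $2\sigma_{k-1}$ for $k=n-m$ odd and $2\sigma_{k-1}-1$ for $k$ even, which is exactly the recursion satisfied by the closed form for $\sigma_k$.

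The paper's proof is shorter and needs no knowledge of $\ell_j$. It writes $\Gamma_n=G_n\cup G_{n-1}$ as in Figure~\ref{fig:2WayGluing} and passes to $\Gamma_{n+1}$ in Figure~\ref{fig:4WayGluing}. This duplicates every loop except a loop through $u$ lying in $G_{n-1}$, which is present exactly when $n-m$ is odd and instead yields a single loop through both $u$ and $v$. The paper thus obtains $\sigma_{k+1}=2\sigma_k$ or $2\sigma_k-1$ according to the parity of $k$.
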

\begin{proof}
From Theorem~\ref{thm:2seriesefns} it is sufficient to count the number $\sigma_{n-m}$ of even loops of copies of $G_{m-2}$ obtained when we decompose $\Gamma_n$ into copies of $G_{m-1}$ and $G_{m-2}$.  We do so by induction on $n$ with base case $n=m$, where Figure~\ref{fig:4WayGluing} makes it clear the number of loops is $1=\sigma_0$.

Suppose $n>m$.  Consider the decomposition of $\Gamma_n$ into a copy of $G_n$ and one of $G_{n-1}$ in Figure~\ref{fig:2WayGluing}. Since these copies meet at a point, any loop of copies of $G_{m-2}$ in $\Gamma_n$ must lie entirely within the copy of $G_n$  or entirely within the copy of $G_{n-1}$.  Denote the first as a $\Gamma_n$ loop of type $1$ and the second as a $\Gamma_n$ loop of type $2$.  Note that there is at most one $\Gamma_n$ loop through the gluing point $u$ in the figure because loops do not intersect. It is easy to check by recursing the decomposition in Figure~\ref{fig:G_nGluing} that this loop is in $G_n$ (hence is type $1$) if $n-m$ is even and in $G_{n-1}$ (so is type $2$) if $n-m$ is odd.

Now look at the decomposition of $\Gamma_{n+1}$ into two copies of each of $G_n$ and $G_{n-1}$ using Figure~\ref{fig:4WayGluing}. Since the copies of $G_n$ have both boundary points identified, just as they were in $\Gamma_n$ in Figure~\ref{fig:2WayGluing}, we conclude immediately that for any $\Gamma_n$ loop of type $1$ there are two $\Gamma_{n+1}$ loops.  The same can be said for any $\Gamma_n$ loops of type $2$ through the gluing point $u$ in  Figure~\ref{fig:2WayGluing}.  If $n-m$ is even we saw above that there are no type $2$ loops through this point, so in this case $\sigma_{n+1-m}=2\sigma_{n-m}$.  If $n-m$ is odd then there is a $\Gamma_n$ loop of type $2$ through the identified boundary points. This does not give two loops on $\Gamma_{n+1}$, but instead we see that there is a unique loop of copies of $G_{m-2}$ through both $u$ and $v$ and supported on the copies of $G_{n-1}$. It follows that $\sigma_{n+1-m}= 2\sigma_{n-m}-1$ in this case.

It is easy to verify that the stated formula $\sigma_k$ satisfies
\begin{equation*}
	\sigma_{k+1} = \begin{cases} 2\sigma_{k} &\text{ if $k$ is even}\\
				2\sigma_k -1 &\text{ if $k$ is odd}
				\end{cases}
	\end{equation*}
so the proof is complete.
\end{proof}

\section{Dynamics for characteristic polynomials of the Laplacians.}\label{sec:dynamics}

It was recognized by Grigorchuk and \.Zuk~\cite{GZ2} that the spectra of certain weighted Laplacian operators on the graphs $\Gamma_n$, in which certain edges have weight $\mu$ and the others weight $1$, are related by a two-dimensional dynamical system which operates on the weight and the eigenvalue. 
The subsequent work in~\cite{brzoska} found a different dynamical description of the Dirichlet spectra of the operators $\Delta(G_n)$ in the situation where all edges have equal weight.  In this latter approach one first decomposes the spectrum according to the topology of the graphs, as in Section~\ref{sec:topologyofspectrum}, counts the multiplicities of the eigenvalues, and then determines a dynamical system that relates certain factors in the characteristic polynomials that correspond to specific eigenspaces. In this section we generalize the latter method to the graphs $\Gamma_n$.  Our main result, Theorem~\ref{thm:mainrecursion} is a generalization of the recursion in Theorem~\ref{thm:brzrecursion}, which is a restatement of Corollary~3.16 of~\cite{brzoska}.

\subsection*{Factorization of characteristic polynomials via topology of the graphs}

\begin{definition}\label{def:GammaPoly}
Following the notation in~\cite{brzoska} we let $\gamma_n$ be the monic polynomial whose roots are the eigenvalues born at level $n$ for the Dirichlet Laplacian on $G_n$.  Also define $\psi_n$ to be the monic polynomial whose roots are the $0$-series eigenvalues born at level $n$. To simplify some recursions we also set $\gamma_0=\gamma_{-1}=\gamma_{-2}=1$, which is consistent with the preceding definition because $G_0$ has no Dirichlet eigenvalues.
\end{definition}

\begin{lemma}
$\gamma_{n-2}$ is the monic polynomial with roots the $2$-series eigenvalues born at level $n$ on $\Gamma_n$.
\end{lemma}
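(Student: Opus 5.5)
This is essentially a restatement of Lemma~\ref{lem:2seriesareDirevals} combined with Corollary~\ref{lem:2SeriesSimpleBirth}. The plan is to compare the two monic polynomials root by root and multiplicity by multiplicity. By Definition~\ref{def:GammaPoly}, $\gamma_{n-2}$ is the monic polynomial whose roots are the eigenvalues born at level $n-2$ in $\Dirspec(\Delta(G_{n-2}))$. The polynomial in question is the monic polynomial whose roots are the $2$-series eigenvalues of $\Delta(\Gamma_n)$ born at level $n$, each counted with its multiplicity in $\spec(\Delta(\Gamma_n))$.

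First, for equality of root sets, apply Lemma~\ref{lem:2seriesareDirevals} directly. It says $\lambda$ is in the $2$-series and born at level $n$ if and only if $\lambda\in\Dirspec(\Delta(G_{n-2}))$ and $\lambda$ is born at level $n-2$ for that spectrum. For $n\geq2$ the two root sets therefore coincide. For the degenerate indices ($n-2\in\{0,-1,-2\}$) the convention $\gamma_0=\gamma_{-1}=\gamma_{-2}=1$ is needed. Since $G_0$ has no interior vertices, it has no Dirichlet eigenvalues. Also, Definition~\ref{def:02Series} only admits $2$-series eigenvalues for $n\geq2$, and at $n=2$ Lemma~\ref{lem:2seriesareDirevals} reduces to $\Dirspec(\Delta(G_0))=\emptyset$. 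So both sides equal $1$ there.

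Second, for multiplicities, show every root is simple on both sides. On the $\Gamma_n$ side, Corollary~\ref{lem:2SeriesSimpleBirth} gives that a $2$-series eigenvalue born at level $n$ is simple in $\spec(\Delta(\Gamma_n))$. On the $G_{n-2}$ side, Theorem~\ref{thm:Gnefntheorem}~\eqref{item:Gnefnsimple} gives that a Dirichlet eigenvalue born at level $n-2$ is simple, with eigenfunction $h_{n-2}$. Both monic polynomials are therefore squarefree with the same roots, hence equal.

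The main point needing care, rather than a real obstacle, is making sure the multiplicity convention in the definition of $\gamma_n$ matches: it records roots born at level $n$ with their multiplicity in $\Dirspec(\Delta(G_n))$. The simplicity statement of Theorem~\ref{thm:Gnefntheorem}~\eqref{item:Gnefnsimple} settles this, and the small-$n$ conventions are handled as above.
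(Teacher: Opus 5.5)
Your proposal is correct and follows the paper's proof: the root sets agree by Lemma~\ref{lem:2seriesareDirevals}, and the roots are simple on both sides by Corollary~\ref{lem:2SeriesSimpleBirth} and Theorem~\ref{thm:Gnefntheorem}~\eqref{item:Gnefnsimple}. Your check of the degenerate indices $n\leq 2$ against the convention $\gamma_0=\gamma_{-1}=\gamma_{-2}=1$ is correct and makes explicit a case the paper leaves implicit.
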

\begin{proof}
From Lemma~\ref{lem:2seriesareDirevals} the sets of eigenvalues are the same. From Corollary~\ref{lem:2SeriesSimpleBirth} the eigenvalues are simple for $\Delta(\Gamma_n)$ and from Theorem~\ref{thm:Gnefntheorem}~\eqref{item:Gnefnsimple} they are simple as Dirichlet eigenvalues for $\Delta(G_{n-2})$.
\end{proof}

\begin{theorem}\label{thm:FullFactorization}
The characteristic polynomial of the spectrum of $\Gamma_n$ is
\begin{equation*}
P_n=\prod_{k=0}^{n} \psi_k\gamma_{k-2}^{\sigma_{n-k}}
\end{equation*}
where $\sigma_k= \frac16(2^{k+2}+3 + (-1)^{k+1})$.
\end{theorem}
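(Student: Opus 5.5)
Since $\Delta(\Gamma_n)$ is self-adjoint, its characteristic polynomial factors as $\prod_\lambda(x-\lambda)^{\mathrm{mult}_n(\lambda)}$. The plan is therefore to sort the eigenvalues by their level of birth $k\le n$ and their series, and to read off the multiplicity at level $n$ of each class from the results of Section~\ref{sec:topologyofspectrum}.

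First, every $\lambda\in\spec(\Delta(\Gamma_n))$ has a well-defined birth level $k\le n$. By Lemma~\ref{lem:EigenvaluesPersist}, $\lambda$ then lies in $\spec(\Delta(\Gamma_j))$ for every $j\ge k$. For $k\ge2$, Definition~\ref{def:02Series} together with Lemma~\ref{lem:NoSingle0} places $\lambda$ in exactly one series. Take an eigenfunction at level $k$. If some eigenfunction vanishes at $u$, it vanishes at both $u$ and $v$, so $\lambda$ is $2$-series. Otherwise every eigenfunction has $f(u)f(v)\neq0$, so $\lambda$ is $0$-series. The cases $k=0,1$ are $0$-series by definition.

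I also need to check that the two series are disjoint and that no eigenvalue is counted twice across levels. Disjointness follows because each eigenvalue has a unique birth level and the definition at that level is a dichotomy.

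Next, compute the multiplicities. For a $2$-series eigenvalue born at level $k$, Lemma~\ref{lem:2SeriesMultiplicity} gives multiplicity $\sigma_{n-k}$ on $\Gamma_n$. By the lemma preceding the theorem, these eigenvalues are exactly the simple roots of $\gamma_{k-2}$, which contributes the factor $\gamma_{k-2}^{\sigma_{n-k}}$. For $k=0,1$ we have $\gamma_{k-2}=1$, consistent with there being no $2$-series at those levels. For $k=2$, $\gamma_0=1$ because $G_0$ has no Dirichlet eigenvalues.

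For $0$-series eigenvalues born at level $k$, I must show that the multiplicity on $\Gamma_n$ is exactly $1$ for every $n\ge k$. This is the main step, since Lemma~\ref{lem:0seriessimple} only gives simplicity at the level of birth. My plan is to use Lemma~\ref{lem:EigenvaluesPersist} and Remark~\ref{rem:EigenvaluesPersist} for existence: they give an eigenfunction on $\Gamma_n$ with $f(u)=f(v)\ne0$.

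For uniqueness, suppose the multiplicity were at least $2$. Some linear combination would then vanish at $u$, hence at both $u$ and $v$ by Lemma~\ref{lem:NoSingle0}. Restricting that combination to the copies of $G_{n-1}$ and $G_{n-2}$ gives Dirichlet eigenfunctions with eigenvalue $\lambda$, and not all of them vanish. So $\lambda\in\Dirspec(\Delta(G_j))$ for some $j$, born at some level $j'$. By Lemma~\ref{lem:2seriesareDirevals}, $\lambda$ is then $2$-series born at level $j'+2$. This contradicts the disjointness of the series, provided the birth level is unique.

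The delicate point is that the nonzero restriction may not itself be born at its own level. Theorem~\ref{thm:Gnefntheorem} handles this: any Dirichlet eigenvalue of $G_j$ has some birth level $j'\le j$, which is all the argument needs. With that, the $0$-series factor at level $n$ is $\psi_k$ to the first power.

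Multiplying over $k=0,\dots,n$ gives $P_n=\prod_k\psi_k\gamma_{k-2}^{\sigma_{n-k}}$. As a consistency check, the degrees should sum to $2^n$ by Lemma~\ref{lem:SizeofGamma}, though the identification of roots with multiplicities already suffices.
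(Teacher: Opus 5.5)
Your proposal is correct and is essentially the paper's proof: Lemma~\ref{lem:NoSingle0} places each eigenvalue, at its birth level $k$, in exactly one series; the $2$-series eigenvalues born at level $k$ are the simple roots of $\gamma_{k-2}$ and have multiplicity $\sigma_{n-k}$ on $\Gamma_n$ by Lemma~\ref{lem:2SeriesMultiplicity}; and the $0$-series eigenvalues are simple, so $\psi_k$ appears to the first power. The only difference is that the paper simply cites Lemma~\ref{lem:0seriessimple}, whose statement already covers every $n$, whereas you re-derive simplicity at levels $n>k$ via Lemma~\ref{lem:NoSingle0} and the converse half of Lemma~\ref{lem:2seriesareDirevals} --- but the paper's proof of that converse only excludes an earlier birth in the $2$-series, so to make your step self-contained you should rule out a $0$-series $\lambda\in\Dirspec(\Delta(G_{j'}))$ (Dirichlet birth level $j'$) directly, e.g.\ by lifting its eigenfunction to $\Gamma_n$ with $n\geq\max(k,j'+2)$ (still $f(u)\neq0$ by Remark~\ref{rem:EigenvaluesPersist}) and running the $G_{n+1}$ construction from the proof of Lemma~\ref{lem:2seriesvanishatuandv}, which contradicts parts~\eqref{item:notGnefn4} and~\eqref{item:notGnefn5} of Theorem~\ref{thm:Gnefntheorem}.
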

\begin{proof}
All eigenvalues of $\Delta(\Gamma_n)$ are in the 0-series or 2-series by Lemma~\ref{lem:NoSingle0}, so $P_n$ is the product of $\gamma_k$ and $\psi_k$ factors raised to the correct multiplicities.  All $0$-series eigenvalues are simple by Lemma~\ref{lem:0seriessimple}.  The $2$-series eigenvalues born at level $k$ are simple when born, so are multiplicity $1$ roots of $\gamma_{k-2}$. They occur with multiplicity $\sigma_{n-k}$ on $\Gamma_n$ by Lemma~\ref{lem:2SeriesMultiplicity}, so give factors $\gamma_{k-2}^{\sigma_{n-k}}$ for $k\geq3$, but for $k=0,1,2$ we have defined $\gamma_{k-2}=1$.
\end{proof}

It will be helpful later to compare this to the formula for the characteristic polynomial of the Dirichlet Laplacian matrix on $G_n$.  In the notation of~\cite{brzoska} this is called $c_n$, and the following result is established.
\begin{theorem}[\protect{Theorem~3.13 of~\cite{brzoska}}]\label{thm:Brzfmlaforcn}
\begin{gather*}
c_n=\gamma_{n}\prod_{k=1}^{n-1} \gamma_k^{S_{n-k}}, \text{ where}\\
S_n=\frac1{36}\bigl( 9+23(-1)^n+2^{2+n}-6n(-1)^n\bigr).
\end{gather*}
\end{theorem}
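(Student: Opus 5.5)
The plan is to compute, for each eigenvalue $\lambda$ born at level $k$ in the Dirichlet spectrum, the dimension of its Dirichlet eigenspace on $G_n$, and then assemble $c_n$ as a product. Every root of $c_n$ is a Dirichlet eigenvalue of $G_n$ and is born at some level $k\leq n$. Since $G_0$ has no Dirichlet eigenvalues, $k\geq1$. By Theorem~\ref{thm:Gnefntheorem}~\eqref{item:Gnefnsimple}, such a $\lambda$ is a simple root of $\gamma_k$. Because $\Delta(G_n)$ with Dirichlet conditions is symmetric, algebraic and geometric multiplicities agree. Hence $c_n=\prod_{k=1}^n\gamma_k^{a_{n-k}}$, where $a_j$ is the dimension of the Dirichlet $\lambda$-eigenspace on $G_{k+j}$. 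The theorem then reduces to showing $a_0=1$ and $a_j=S_j$ for $j\geq1$. This also assumes $a_j$ depends only on $j=n-k$ and not on $\lambda$ or $k$, which the recursion below confirms.

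To compute $a_j$, split the eigenspace using Theorem~\ref{thm:Gnefntheorem}~\eqref{item:Gnefn}. Let $b_j$ be the dimension of the Dirichlet--Neumann subspace on $G_{k+j}$. By item~\eqref{item:Gnefnsimple}, $a_0=1$ and $b_0=0$. For $j\geq1$, item~\eqref{item:Gnefnn-modd} gives that every eigenfunction is Dirichlet--Neumann when $j$ is odd. Item~\eqref{item:DnotNefn} gives that when $j$ is even there is exactly one additional dimension of Dirichlet-but-not-Neumann functions. So
\[
a_j=b_j+\tfrac12\bigl(1+(-1)^j\bigr)\quad (j\geq1).
\]
Item~\eqref{item:Gnefnn-modd} says $b_1=0$, and item~\eqref{item:DnotNefn} says $b_2=0$.

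For $j\geq3$, item~\eqref{item:DNefns} describes all Dirichlet--Neumann functions. We decompose $G_{k+j}$ at $v$ into one copy of $G_{k+j-1}$ and two copies of $G_{k+j-2}$. On each piece independently the restriction is either zero or Dirichlet--Neumann. When $j$ is odd, there is one further family: a Dirichlet-non-Neumann function on the $G_{k+j-1}$ piece, which is one-dimensional by item~\eqref{item:DnotNefn}. Since the theorem asserts these constructions exhaust everything, and the pieces meet only at $v$ where all functions vanish, the sum is direct. This yields
\[
b_j=b_{j-1}+2b_{j-2}+\tfrac12\bigl(1-(-1)^j\bigr)\quad (j\geq3).
\]

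It remains to solve this recursion and check it against the closed form. The first values are $a_1=0$, $a_2=1$, $b_3=1$, $a_3=1$, $b_4=1$, $a_4=2$. These match $S_1=0$, $S_2=1$, $S_3=1$, $S_4=2$ computed from the formula. To finish, substitute $b_j=S_j-\frac12(1+(-1)^j)$ into the recursion. This reduces the claim to an identity for the inhomogeneous linear recurrence whose characteristic roots are $2$ and $-1$. The root $-1$ is resonant with the $(-1)^j$ forcing term, which is exactly why the term $-6n(-1)^n$ appears in $S_n$. Verifying this identity is routine.

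The main obstacle is the directness and completeness claimed in the recursion for $b_j$: we must be sure the listed constructions are linearly independent and give every Dirichlet--Neumann function. We take this from Theorem~\ref{thm:Gnefntheorem}~\eqref{item:DNefns}. Independence is clear because distinct pieces have disjoint interiors and the extra odd-$j$ function is not Neumann at the boundary of its piece.
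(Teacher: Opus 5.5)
Your argument is correct. The paper gives no proof of this statement; it simply quotes Theorem~3.13 of \cite{brzoska}. Your proposal is therefore a self-contained derivation from the eigenfunction classification in Theorem~\ref{thm:Gnefntheorem}, and each step holds:
\begin{itemize}
\item the base values $a_0=1$, $a_1=b_1=0$ and $b_2=0$ follow from items~\eqref{item:Gnefnsimple}--\eqref{item:DnotNefn};
\item the recursion $b_j=b_{j-1}+2b_{j-2}+\tfrac12\bigl(1-(-1)^j\bigr)$ follows from the completeness statement in item~\eqref{item:DNefns} together with the direct-sum decomposition at $v$;
\item you then match the closed form.
\end{itemize}
This is the same kind of level-by-level multiplicity count that the paper itself does for the $2$-series on $\Gamma_n$ in Lemma~\ref{lem:2SeriesMultiplicity}. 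Relative to the bare citation, it buys an explanation of where the resonant term $-6n(-1)^n$ in $S_n$ comes from, namely the $(-1)^j$ forcing meeting the characteristic root $-1$.

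The step you call routine does check out. For all $j\geq2$ one has $S_j-S_{j-1}-2S_{j-2}=-\tfrac12\bigl(1+(-1)^j\bigr)$:
\begin{itemize}
\item the $2^{n+2}$ and $23(-1)^n$ parts cancel;
\item the constant part gives $-\tfrac12$;
\item the $-6n(-1)^n$ part gives $-\tfrac12(-1)^j$.
\end{itemize}
Since $e_j=\tfrac12\bigl(1+(-1)^j\bigr)$ satisfies $e_j-e_{j-1}-2e_{j-2}=-1$, substituting $b_j=S_j-e_j$ yields exactly your recursion. The base cases are $b_1=S_1=0$ and $b_2=S_2-1=0$. You may also note that $S_0=1$, so $a_j=S_j$ for every $j\geq0$, and the separate factor $\gamma_n$ in the statement is just the $k=n$ term of $\prod_{k=1}^n\gamma_k^{S_{n-k}}$.
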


The following connection between the $P_n$ and $c_n$ may be established either by recognizing that the Dirichlet but not Neumann terms in $c_n$ are from $\gamma_n$ while the remaining factors correspond to the Dirichlet-Neumann eigenfunctions, or by the direct computation we give below.

\begin{proposition}\label{prop:Pnusingcn}
For $n\geq2$,
\begin{equation*}
P_n = \frac{c_nc_{n-1}}{\gamma_n\gamma_{n-1}} \prod_{k=0}^n\psi_k .
\end{equation*}
\end{proposition}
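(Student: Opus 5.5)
The plan is to prove the identity by a direct comparison of exponents, using the two explicit factorizations already available: Theorem~\ref{thm:FullFactorization} for $P_n$ and Theorem~\ref{thm:Brzfmlaforcn} for $c_n$. The factor $\prod_{k=0}^n\psi_k$ appears on both sides and cancels. It therefore suffices to show
\begin{equation*}
\prod_{k=0}^{n}\gamma_{k-2}^{\sigma_{n-k}} = \frac{c_nc_{n-1}}{\gamma_n\gamma_{n-1}}.
\end{equation*}

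First I rewrite the left side. The terms with $k=0,1,2$ contribute $\gamma_{-2}=\gamma_{-1}=\gamma_0=1$. Setting $j=k-2$, the left side becomes $\prod_{j=1}^{n-2}\gamma_j^{\sigma_{n-2-j}}$.

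For the right side, Theorem~\ref{thm:Brzfmlaforcn} gives
\begin{equation*}
\frac{c_n}{\gamma_n}=\prod_{j=1}^{n-1}\gamma_j^{S_{n-j}}, \qquad \frac{c_{n-1}}{\gamma_{n-1}}=\prod_{j=1}^{n-2}\gamma_j^{S_{n-1-j}}.
\end{equation*}
For $n=2$ the second product is empty, which is consistent. Multiplying, the right side is
\begin{equation*}
\gamma_{n-1}^{S_1}\prod_{j=1}^{n-2}\gamma_j^{S_{n-j}+S_{n-1-j}}.
\end{equation*}

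The proof then reduces to two arithmetic facts about the explicit sequences. The first is $S_1=0$, which holds since $S_1=\frac1{36}(9-23+8+6)=0$; this removes the stray $\gamma_{n-1}$ factor. The second is the identity $S_{i+2}+S_{i+1}=\sigma_i$ for all $i\geq0$, applied with $i=n-2-j$. To verify it, add the formulas:
\begin{itemize}
\item the constant terms give $18$;
\item the $23(-1)^{\cdot}$ terms cancel, since they have opposite signs;
\item the powers of two give $2^{i+4}+2^{i+3}=24\cdot 2^i$;
\item the linear terms give $-6(-1)^i\bigl((i+2)-(i+1)\bigr)=-6(-1)^i$.
\end{itemize}
Hence
\begin{equation*}
S_{i+2}+S_{i+1}=\tfrac1{36}\bigl(18+24\cdot2^i-6(-1)^i\bigr)=\tfrac16\bigl(3+2^{i+2}+(-1)^{i+1}\bigr)=\sigma_i .
\end{equation*}

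With both facts the exponents of each $\gamma_j$ agree on the two sides, and the proposition follows. There is no real obstacle. The only care needed is with index bookkeeping at the ends: the conventions $\gamma_{-2}=\gamma_{-1}=\gamma_0=1$, the boundary case $n=2$, and the vanishing of $S_1$.

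Alternatively, the identity could be explained conceptually. The factor $c_n/\gamma_n$ records the Dirichlet–Neumann eigenfunctions on the copy of $G_n$ in Figure~\ref{fig:2WayGluing}, and $c_{n-1}/\gamma_{n-1}$ records those on the copy of $G_{n-1}$, and together these account for the $2$-series loops in $\Gamma_n$. The direct computation above is shorter and is the one I would present.
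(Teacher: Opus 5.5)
Your proposal is correct and is essentially the paper's own argument: cancel the $\psi_k$ factors, use $\gamma_0=\gamma_{-1}=\gamma_{-2}=1$ to reindex $\prod_{k}\gamma_{k-2}^{\sigma_{n-k}}$ as $\prod_{j=1}^{n-2}\gamma_j^{\sigma_{n-2-j}}$, and match exponents against Theorem~\ref{thm:Brzfmlaforcn} using $S_{i+2}+S_{i+1}=\sigma_i$ and $S_1=0$. Your term-by-term check of that identity is accurate, including the linear contribution $-6(-1)^i$, and is slightly cleaner than the paper's displayed intermediate step, whose linear term should read $(-1)^m\bigl(6(m-1)-6m\bigr)$.
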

\begin{proof}
Equality for the $\psi_k$ factors is obvious. If we first check that
\begin{equation*}
S_m+S_{m-1}=\frac1{36}\bigl( 18+(4+2)2^{m} +(-1)^{m}(6(m-1)-m)\bigr)=\frac16(3+2^m-(-1)^m)=\sigma_{m-2}
\end{equation*}
then the statement follows from computing
\begin{equation*}
	  \prod_1^{n} \gamma_{k-2}^{\sigma_{n-k}}
	  = \prod_1^{n-2} \gamma_k^{\sigma_{n-k-2}}
	  = \prod_1^{n-2} \gamma_k^{S_{n-k}+S_{n-k-1}}
	  =   \biggl( \prod_1^{n-1}\gamma_k^{S_{n-k}}\biggr)\biggl( \prod_1^{n-2} \gamma_k^{S_{n-k-1}}\biggr) 
	  \end{equation*}
in which we used $\gamma_0=\gamma_{-1}=1$ for the first equality, $\sigma_{n-k-2}=S_{n-k}+S_{n-k-1}$ in the second and $S_1=0$ in the third. The requirement $n\geq2$ ensures all subscripts are within the defined ranges.
\end{proof}

\begin{corollary}\label{cor:deg(psi_n)}
We have $\deg(\psi_0)=1$, $\deg(\psi_1)=2$ and for $n\geq2$
 \begin{align*}
 \deg\left(\psi_n\right) &=\deg\left(\gamma_n\right)-\deg\left(\gamma_{n-2}\right)
 =\frac2{\sqrt{7}} \sum_{j=1,2,3} (\rho_j^n-\rho_j^{n-2})\cos\bigl(\phi+\frac{2\pi j}3\bigr)
 	\end{align*}
where $\phi=\frac13\arctan(-3\sqrt3)$ and the $\rho_j$ are the roots of $\rho^3-\rho^2-2\rho+1$, specifically
 \begin{equation*}
 \rho_1=\frac13\Bigl(1-2\sqrt{7}\cos\phi\Bigr),\quad  \rho_2=\frac13\Bigl(1-2\sqrt{7}\cos\bigl(\phi+\frac{2\pi}3\bigr) \Bigr),\quad
  \rho_3=\frac13\Bigl(1+2\sqrt{7}\cos\bigl(\phi+\frac\pi3\bigr) \Bigr).
 \end{equation*}
\end{corollary}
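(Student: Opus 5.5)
Compare degrees in the factorization of Theorem~\ref{thm:FullFactorization} at consecutive levels. Since $\Gamma_n$ has $2^n$ vertices by Lemma~\ref{lem:SizeofGamma}, we have $\deg P_n=2^n$. Therefore
\begin{equation*}
2^n=\sum_{k=0}^n\Bigl(\deg\psi_k+\sigma_{n-k}\deg\gamma_{k-2}\Bigr).
\end{equation*}
For the small cases, $\Gamma_0$ has one vertex, so $\deg\psi_0=1$. For $n=1$ there are no $2$-series terms, since $\gamma_{-1}=\gamma_{-2}=1$. This gives $2=\deg\psi_0+\deg\psi_1$, hence $\deg\psi_1=2$.

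For $n\ge2$ we could subtract the identity at level $n-1$ and use the recursion for $\sigma_k$ from Lemma~\ref{lem:2SeriesMultiplicity}, but it is cleaner to use Proposition~\ref{prop:Pnusingcn}. Taking degrees there gives
\begin{equation*}
2^n=\deg c_n+\deg c_{n-1}-\deg\gamma_n-\deg\gamma_{n-1}+\sum_{k=0}^n\deg\psi_k.
\end{equation*}
The same identity holds at level $n-1$ provided $n-1\ge2$; we handle $n=2$ directly, where $P_2$ has degree $4$ and $c_1,c_2$ can be read off from Figure~\ref{fig:InitialG_n}. Here $\deg c_n=\#G_n-2$, the number of interior vertices. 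From Lemma 2.5 of~\cite{brzoska}, $\#G_n+\#G_{n-1}=2^n+3$, consistent with Lemma~\ref{lem:SizeofGamma}. Hence
\begin{equation*}
\deg c_n+\deg c_{n-1}=2^n-1.
\end{equation*}
Substituting yields
\begin{equation*}
\sum_{k=0}^n\deg\psi_k=1+\deg\gamma_n+\deg\gamma_{n-1}.
\end{equation*}
Differencing this at levels $n$ and $n-1$ gives $\deg\psi_n=\deg\gamma_n-\deg\gamma_{n-2}$ for $n\ge3$. For $n=2$, the same relation follows from the $n=2$ identity, using $\deg\psi_0+\deg\psi_1=2$ and $\gamma_0=1$, together with the value of $\deg\gamma_1$ from $G_1$, which has a single interior vertex with a loop.

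For the closed form we need $\deg\gamma_n$ explicitly. Taking degrees in Theorem~\ref{thm:Brzfmlaforcn} gives
\begin{equation*}
\#G_n-2=\deg\gamma_n+\sum_{k=1}^{n-1}S_{n-k}\deg\gamma_k.
\end{equation*}
This is a convolution identity. Its generating function is
\begin{equation*}
\sum_n\deg\gamma_n\,x^n=\frac{\sum_n(\#G_n-2)x^n}{1+\sum_{j\ge1}S_jx^j},
\end{equation*}
and both numerator and denominator are rational, being built from $2^n$, $(-1)^n$, $n(-1)^n$ and constants. I expect that after cancellation the denominator becomes $1-x-2x^2+x^3$, giving the recurrence $d_n=d_{n-1}+2d_{n-2}-d_{n-3}$ with characteristic polynomial $\rho^3-\rho^2-2\rho+1$. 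Possibly this recurrence is already stated in~\cite{brzoska}, in which case I would cite it.

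The main obstacle is solving this cubic and fitting the initial conditions. Substituting $\rho=t+\tfrac13$ depresses the cubic to $t^3-\tfrac73t+\tfrac7{27}=0$. The trigonometric method then gives $t=\tfrac{2\sqrt7}{3}\cos(\cdot)$ with $\cos 3\theta$ proportional to $-1/(2\sqrt7)$, and this matches the stated $\phi=\tfrac13\arctan(-3\sqrt3)$ after expressing $\arccos$ as $\arctan$ and choosing branches to get the listed $\rho_j$. Finally, I would fit the coefficients $A_j$ in $d_n=\sum_jA_j\rho_j^n$ from $d_0,d_1,d_2$. Verifying that $A_j=\tfrac{2}{\sqrt7}\cos(\phi+2\pi j/3)$, with the indexing aligned to $\rho_j$, is routine but fiddly. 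I would check it numerically at small $n$ and then confirm it symbolically using Vieta's relations.
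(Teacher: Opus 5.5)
The step that fails is $n=1$. From $2=\deg P_1=\deg\psi_0+\deg\psi_1$ and $\deg\psi_0=1$ you wrote ``hence $\deg\psi_1=2$'', but this gives $\deg\psi_1=1$, and $1$ is the correct value. By Definition~\ref{def:BornEigenvalues} the eigenvalue $0$ is born at level $0$, so the only $0$-series eigenvalue born at level $1$ is $4$, and $\psi_1=\lambda-4$. This is what Theorem~\ref{thm:FullFactorization} forces ($P_1=\psi_0\psi_1=\lambda^2-4\lambda$). It is also what the proof of Corollary~\ref{cor:0seriesfromzeta=2} uses, where $\psi_1(\tilde\lambda)=0$ means $\tilde\lambda=4$.

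So $\deg\psi_1=2$ in the statement, like $\psi_1=\lambda(\lambda-4)$ in Theorem~\ref{thm:mainrecursion}, is a misprint and cannot be proved. Together with $\deg\psi_0=1$ it would make $\deg P_1=3$, and at $n=2$ it would give $\deg\psi_2=4-3=1\neq\deg\gamma_2=2$. Your own $n=2$ step uses $\deg\psi_0+\deg\psi_1=2$, which already contradicts the value you asserted. You should prove $\deg\psi_1=1$ and flag the discrepancy, rather than adjust the arithmetic to match. The paper's proof does not treat $n=0,1$ at all.

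For $n\geq2$ your argument is correct and is the paper's argument with different bookkeeping. The paper reads $\psi_n=\frac{P_n}{P_{n-1}}\frac{\gamma_nc_{n-2}}{c_n\gamma_{n-2}}$ off Proposition~\ref{prop:Pnusingcn} and counts degrees. You take degrees first, getting $\sum_{k=0}^n\deg\psi_k=1+\deg\gamma_n+\deg\gamma_{n-1}$, and then difference. Your separate handling of $n=2$ is more careful than the paper's quotient, which tacitly uses Proposition~\ref{prop:Pnusingcn} at level $1$.

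For the closed form, the paper simply quotes Proposition~3.17 of~\cite{brzoska}. Your generating-function route is a genuine, self-contained alternative that uses only Theorem~\ref{thm:Brzfmlaforcn} and the vertex counts, and your expectation is correct. One has $\sum_n\deg c_n\,x^n=\frac{x}{(1-2x)(1-x^2)}$ and $1+\sum_{j\geq1}S_jx^j=\frac{1-x-2x^2+x^3}{(1-x)(1+x)^2(1-2x)}$, so $\sum_n\deg\gamma_n\,x^n=\frac{x(1+x)}{1-x-2x^2+x^3}$. Partial fractions then give $\deg\gamma_n=\sum_jA_j\rho_j^n$ with $A_j=(\rho_j+1)/(3\rho_j^2-2\rho_j-2)$. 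This matches $\frac{2}{\sqrt7}\cos(\phi+\frac{2\pi j}3)$ with the stated indexing; numerically $A_1\approx-0.048$, $A_2\approx-0.629$, $A_3\approx0.677$. Citing saves you solving the cubic and matching coefficients; deriving it removes the dependence on an external proposition.
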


\begin{remark}\label{rem:dynamicaldegree}
The root with largest magnitude, which is $|\rho_3|=\rho_3$, is actually the dynamical degree of the dynamics given in Theorem~\ref{thm:brzrecursion}, though we do not discuss this here.
\end{remark}

\begin{proof}
Observe from Proposition~\ref{prop:Pnusingcn} that 
\begin{equation*}
	\psi_n= \frac{P_n}{P_{n-1}} \frac{\gamma_nc_{n-2}}{c_n\gamma_{n-2}}
	\end{equation*}
and therefore
\begin{equation}\label{eqn:Pnusingcn1}
	\deg(\psi_n)=\deg(P_n)-\deg(P_{n-1})+\deg(c_{n-2})-\deg(c_n)+\deg(\gamma_n)-\deg(\gamma_{n-2}).
\end{equation}
We have $\deg(P_n)=2^n$ because it is the number of points in $\Gamma_n$, see Lemma~\ref{lem:SizeofGamma}. Similarly, $\deg(c_n)$ is the number of points in $C_n$, which is two less than the number of points in $G_n$ and is therefore $\frac16(2^{n+2}+(-1)^{n+1}-3)$ by Lemma~2.5 of~\cite{brzoska}. Substituting into~\eqref{eqn:Pnusingcn1} gives the first expression for $\deg(\psi_n)$ and the second is obtained by substituting the degrees of $\gamma_n$ and $\gamma_{n-2}$ from 
Proposition~3.17 of~\cite{brzoska}.
\end{proof}

\subsection*{Recursions for characteristic polynomials}

We derive a recursive formula for the characteristic polynomials of $\Delta(\Gamma_n)$. This method follows the approach in \cite{brzoska}, which utilizes the self-similarity of the graphs by successively removing vertices and analyzing how the determinant behaves under such operations. Unlike the approach of~\cite{GZ2}, which uses the Schur complement to deduce the dynamics, we argue directly from the following elementary lemma about the effect of deleting a point from a graph on its characteristic polynomial.

Let $L$ be the Laplacian matrix of a graph and denote by $D(L)(\lambda) = \det(\lambda I - L)$ its characteristic polynomial. We will use the following result, proved in \cite{brzoska}, in which the notation $L^A$ refers to the Laplacian matrix in which the rows and columns corresponding to vertices in the set $A$ are deleted.

\begin{lemma}[Lemma 3.1 in \cite{brzoska}]\label{lem:DeterminantByRemovingVertices}
Let $G$ be a finite graph, $u$ a fixed vertex, and $C(u)$ the set of simple cycles in $G$ containing $u$. Suppose $L$ is a matrix indexed by the vertices of $G$ with diagonal entries $d_j$  and off-diagonal entries $-c_{jk}$ such that $c_{jk}=0$ unless the $j$ and $k$ vertices of the graph are connected by an edge. If $D(\cdot)$ denotes the operation of taking the characteristic polynomial then
\begin{equation*}\label{eqn:LaplacianfromBGJRT}
D(L)(\lambda) = (\lambda -d_u) D(L^{\{u\}})(\lambda) - \sum_{v\sim u} c^2_{uv}D(L^{\{u,v\}})(\lambda) + 2\sum_{Z \in C(u)} (-1)^{n(Z)-1}\pi(Z)  D(L^Z)(\lambda),
\end{equation*}
where $n(Z)$ is the number of vertices in $Z$ and $\pi(Z)$ is the product of the edge weights $c_{jk}$ along $Z$.
\end{lemma}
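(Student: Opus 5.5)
The plan is to expand $\det(\lambda I - L)$ by the Leibniz formula and sort the permutations according to how they treat the fixed vertex $u$. Write
\[
D(L)(\lambda)=\det(\lambda I-L)=\sum_{\pi}\operatorname{sgn}(\pi)\prod_j(\lambda I-L)_{j\pi(j)}.
\]
Every permutation $\pi$ decomposes into disjoint cycles, and exactly one of these cycles contains $u$. That cycle falls into one of three types: the fixed point $u\mapsto u$, a transposition $(u\,v)$, or a cycle of length at least $3$ through $u$.

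First I treat the fixed-point case. Here the factor contributed at $u$ is $(\lambda I-L)_{uu}=\lambda-d_u$. Summing over all permutations of the remaining vertices gives exactly $\det$ of the matrix with row and column $u$ deleted, so this case contributes $(\lambda-d_u)D(L^{\{u\}})(\lambda)$.

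Next I treat the transpositions $(u\,v)$. The relevant entries are $(\lambda I-L)_{uv}=c_{uv}$ and $(\lambda I-L)_{vu}=c_{vu}$, and these vanish unless $v\sim u$. I will assume $L$ is symmetric, as it is for the Laplacians where the lemma is applied, so $c_{uv}=c_{vu}$; if not, $c_{uv}^2$ should be read as $c_{uv}c_{vu}$. A transposition has sign $-1$, and the remaining vertices are permuted freely. This gives the term $-\sum_{v\sim u}c_{uv}^2D(L^{\{u,v\}})$.

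Finally I treat cycles of length $k=n(Z)\ge 3$ through $u$. The product of off-diagonal entries is nonzero only if consecutive vertices of the cycle are adjacent, so the support of the permutation cycle is a simple cycle $Z$ of the graph together with a choice of orientation. The sign of a $k$-cycle is $(-1)^{k-1}$. The off-diagonal entries of $\lambda I-L$ are $+c_{jk}$, so the product is $\pi(Z)$, using symmetry so that both orientations give the same product. The vertices off $Z$ contribute $D(L^Z)$. Each simple cycle in $C(u)$ of length at least $3$ corresponds to exactly two cyclic permutations, which accounts for the factor $2$.

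The main obstacle is the bookkeeping of this last case when the graph has multiple edges, since the graphs here have $c_{xy}=2$. I must make sure that "simple cycles" means vertex-cycles of length at least $3$, with edge multiplicity absorbed into the weights $c_{jk}$ rather than counted as separate cycles. I also need to confirm that $2$-cycles are handled only by the $c_{uv}^2$ term and are not double counted in the cycle sum. Loops at $u$ only affect $d_u$, and they cancel in the Laplacian anyway.
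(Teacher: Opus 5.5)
Your proof is correct. The present paper gives no proof of its own and simply quotes the lemma from \cite{brzoska}. Grouping the Leibniz terms of $\det(\lambda I-L)$ by the permutation cycle through $u$ is the standard Schwenk-type argument for such expansions: a fixed point, a transposition, or one of the two orientations of a simple cycle of length at least $3$, with signs $1$, $-1$, $(-1)^{n(Z)-1}$.

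Your reading of $C(u)$ as vertex-cycles of length at least $3$, with edge multiplicities absorbed into $c_{jk}$, is the one that makes the statement literally true. The paper's applications tacitly use a per-edge reading instead when $u$ meets a double edge. An example is the copy of $G_2$ in $\Gamma_3$, where ``two neighbours'' plus a $2$-vertex ``cycle'' contribute $-2-2=-4=-c_{uv}^2$. The two readings agree because $\bigl(\sum_e c_e\bigr)^2=\sum_e c_e^2+2\sum_{e<e'}c_ec_{e'}$, so there is no double counting.
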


We apply this identity to the vertex $u$ in the graph $\Gamma_n$ when it is viewed as in Figure~\ref{fig:2WayGluing}. It is apparent that $L^{\{u\}}$ will then involve the graphs $G_n$, $G_{n-1}$ and some other graphs derived from them by deleting vertices. Following the notation of~\cite{brzoska} we label these $B_n$, $C_n$, $D_n$, where $B_n$ is $G_n$ with one boundary point deleted, $C_n$ is $G_n$ with both boundary points deleted, and $D_n$ is $G_n$ with both boundary points deleted and also one vertex neighboring a boundary point deleted. The graphs $B_3$, $C_3$, and $D_3$ are shown in Figure \ref{fig:Level3Graphs}.  It will be convenient to write $b_n(\lambda)$, $c_n(\lambda)$, $d_n(\lambda)$ for the characteristic polynomials of $B_n$, $C_n$ and $D_n$, respectively.  Note that the roots of $c_n(\lambda)$ are the eigenvalues of the Dirichlet Laplacian on $G_n$.

\begin{figure}[h]
\begin{tikzpicture}[scale=0.6]
\draw  (4.5,3) node[circle,fill=lightgray,inner sep=2pt]{} -- (6,3) node[circle,fill,inner sep=2pt](300a){} --  (7.5,3) node[circle,fill,inner sep=2pt](3u){} --  (9,3) node[circle,fill,inner sep=2pt](300b){} -- (10.5,3) node[circle,fill,inner sep=2pt]{};
\path[-, every loop/.style= {looseness=10, distance=20, in=300,out=240}]   (3u)  --   (7.5,2) node[circle,fill,inner sep=2pt](311){} -- (7.5,1) node[circle,fill,inner sep=2pt](3101){} edge [loop below](7.5,0)  ;
\path[every loop/.style= {looseness=10, distance=20, in=300,out=240}]   (300a) edge [loop below]  () (300b) edge [loop below]();
\draw (3u) edge[bend right]  (311) edge[ bend left]  (311);
\draw (311) edge[bend right]  (3101) edge[ bend left]  (3101);
\node at (7.5,-.5){$B_3$};
\end{tikzpicture}\hspace{0.5in}
\begin{tikzpicture}[scale=0.6]
\draw  (4.5,3) node[circle,fill=lightgray,inner sep=2pt]{} -- (6,3) node[circle,fill,inner sep=2pt](300a){} --  (7.5,3) node[circle,fill,inner sep=2pt](3u){} --  (9,3) node[circle,fill,inner sep=2pt](300b){} -- (10.5,3) node[circle,fill=lightgray,inner sep=2pt]{};
\path[-, every loop/.style= {looseness=10, distance=20, in=300,out=240}]   (3u)  --   (7.5,2) node[circle,fill,inner sep=2pt](311){} -- (7.5,1) node[circle,fill,inner sep=2pt](3101){} edge [loop below](7.5,0)  ;
\path[every loop/.style= {looseness=10, distance=20, in=300,out=240}]   (300a) edge [loop below]  () (300b) edge [loop below]();
\draw (3u) edge[bend right]  (311) edge[ bend left]  (311);
\draw (311) edge[bend right]  (3101) edge[ bend left]  (3101);
\node at (7.5,-.5){$C_3$};
\end{tikzpicture}\hspace{0.5in}
\begin{tikzpicture}[scale=0.6]
\draw  (4.5,3) node[circle,fill=lightgray,inner sep=2pt]{} -- (6,3) node[circle,fill,inner sep=2pt](300a){} --  (7.5,3) node[circle,fill,inner sep=2pt](3u){} --  (9,3) node[circle,fill=lightgray,inner sep=2pt](300b){} -- (10.5,3) node[circle,fill=lightgray,inner sep=2pt]{};
\path[-, every loop/.style= {looseness=10, distance=20, in=300,out=240}]   (3u)  --   (7.5,2) node[circle,fill,inner sep=2pt](311){} -- (7.5,1) node[circle,fill,inner sep=2pt](3101){} edge [loop below](7.5,0)  ;
\path[every loop/.style= {looseness=10, distance=20, in=300,out=240}]   (300a) edge [loop below]  () (300b) edge [loop below]();
\draw (3u) edge[bend right]  (311) edge[ bend left]  (311);
\draw (311) edge[bend right]  (3101) edge[ bend left]  (3101);
\node at (7.5,-.5){$D_3$};
\end{tikzpicture}

\vspace{5mm}
    \begin{tikzpicture}[scale=1.5]
        \draw (-2.5,0) .. controls (-3,-0.5) and (-3,0.5) .. (-2.5,0);
        \draw (-2.5,0) .. controls (-2.25,0.25) and (-1.75,0.25) .. (-1.5,0);
        \draw (-2.5,0) .. controls (-2.25,-0.25) and (-1.75,-0.25) .. (-1.5,0);
        \draw (-1.5,0) .. controls (-1.25,0.25) and (-0.75,0.25) .. (-0.5,0);
        \draw (-1.5,0) .. controls (-1.25,-0.25) and (-0.75,-0.25) .. (-0.5,0);
        \draw (-0.5,0) .. controls (-0.45,0.4) and (-0.2,0.49) .. (0,0.5);
        \draw (-0.5,0) .. controls (-0.45,-0.4) and (-0.2,-0.49) .. (0,-0.5);
        \draw (0,0.5) .. controls (0.2,0.49) and (0.45,0.4) .. (0.5,0);
        \draw (0,-0.5) .. controls (0.2,-0.49) and (0.45,-0.4) .. (0.5,0);
        \draw (0,0.5) .. controls (-0.25,0.9) and (0.25,0.9) .. (0,0.5);
        \draw (0,-0.5) .. controls (-0.25,-0.9) and (0.25,-0.9) .. (0,-0.5);
        \draw (0.5,0) .. controls (0.75,0.25) and (1.25,0.25) .. (1.5,0);
        \draw (0.5,0) .. controls (0.75,-0.25) and (1.25,-0.25) .. (1.5,0);
        \draw (1.5,0) .. controls (1.75,0.25) and (2.25,0.25) .. (2.5,0);
        \draw (1.5,0) .. controls (1.75,-0.25) and (2.25,-0.25) .. (2.5,0);
        \draw (2.5,0) .. controls (3,-0.5) and (3,0.5) .. (2.5,0);
        
        \filldraw [black] (-2.5,0) circle (2pt);
        \filldraw [black] (-1.5,0) circle (2pt);
        \filldraw [gray] (-0.5,0) circle (2pt);
        \node at (-0.3, 0) {$u$};
        \filldraw [black] (0,-0.5) circle (2pt);
        \filldraw [black] (0,0.5) circle (2pt);
        \filldraw [black] (0.5,0) circle (2pt);
        \filldraw [black] (1.5,0) circle (2pt);
        \filldraw [black] (2.5,0) circle (2pt);

        \node at (0,-1.2){$\Gamma_3$};
    \end{tikzpicture}
 
    \caption{Graphs $B_3$, $C_3$, $D_3$, and $\Gamma_3$ with one vertex removed. Shaded vertices are removed in the corresponding matrices.}
    \label{fig:Level3Graphs}
   
\end{figure}

\begin{proposition}\label{prop:CharPolyfromBGJRT}
For $n\geq1$ the characteristic polynomial $P_n$ of  $\Gamma_n$ satisfies
\begin{equation}\label{eq:CharPoly1}
P_n = (\lambda - 4)c_n c_{n-1} - 2d_{n-1}c_n - 2d_n c_{n-1} - 2c_n g_{n-1} - 2c_{n-1}g_n,
\end{equation}
where 
\begin{equation}\label{eqn:DefOfgn}
	g_{n-1}=\prod_{1\leq j <\frac n2}\bigl( c_{n-2j}\bigr)^{2^{j-1}}.
\end{equation}
and $c_n$, $d_n$ are the characteristic polynomials of $C_n$ and $D_n$ respectively.
\end{proposition}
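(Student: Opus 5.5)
We apply Lemma~\ref{lem:DeterminantByRemovingVertices} to $L=\Delta(\Gamma_n)$ at the vertex $u$ of Figure~\ref{fig:2WayGluing} and identify each term as a product of the polynomials $c_n$, $c_{n-1}$, $d_n$, $d_{n-1}$ and $g$'s.

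\textbf{Diagonal term.} The vertex $u$ is formed from the four boundary points of $G_n$ and $G_{n-1}$. Each boundary point has degree one in its graph, so $d_u=4$. Deleting $u$ leaves the disjoint union of $G_n$ and $G_{n-1}$ with their boundary points removed, that is $C_n\sqcup C_{n-1}$. Since the characteristic polynomial of a block diagonal matrix is the product of the blocks', the first term is $(\lambda-4)c_nc_{n-1}$.

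\textbf{Neighbour terms.} The neighbours of $u$ are the four vertices adjacent to boundary points, two in each copy. Each is joined to $u$ by a single edge, so $c_{uv}=1$. Deleting $u$ and a neighbour in $G_n$ leaves $D_n\sqcup C_{n-1}$, giving $-2d_nc_{n-1}$. The analogous pair in $G_{n-1}$ gives $-2d_{n-1}c_n$.

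\textbf{Cycle terms.} This is the main step. A simple cycle through $u$ must leave and return through the same piece, since $u$ is a cut vertex separating $G_n$ from $G_{n-1}$. So it consists of a simple path in $G_n$ (or $G_{n-1}$) joining its two boundary points. All edges have weight one, so $\pi(Z)=1$, and the sign depends only on the parity of $n(Z)$.

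The plan is to show the following.
\begin{enumerate}[(a)]
\item Within $G_k$ the simple boundary-to-boundary paths all have the same length. Following the recursive decomposition of Figure~\ref{fig:G_nGluing}, a path must cross $G_{k-2}$, then pass through $v$, then cross the other $G_{k-2}$. Inside each $G_{k-2}$ it again passes through the central point. This continues down to $G_0$ or $G_1$. At each level where we pass through a $G_j$ whose two boundary points are joined by double edges (the pairs of parallel edges in the lower levels) we get a binary choice of edge.
\item Each such path deletes the same set of vertices, namely the spine of $G_k$. Deleting the spine leaves a disjoint union of pieces. These are copies of $C_{k-1}$ hanging at the central point, then from each $G_{k-2}$ the copies of $C_{k-3}$, and so on. The total product is $\prod_{1\le j<(k+1)/2}c_{k+1-2j}^{2^{j-1}}=g_k$, where the exponent $2^{j-1}$ counts the copies at depth $j$.
\item The number of simple cycles (counting parallel edge choices), combined with the prefactor $2$ and the sign $(-1)^{n(Z)-1}$, yields exactly $-2$ times the product of $g_k$ with the other factor $c_{n-1}$ or $c_n$.
\end{enumerate}

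The obstacle is this careful bookkeeping. Parallel edges must be counted correctly in Lemma~\ref{lem:DeterminantByRemovingVertices}: either the multiple cycles, each of weight one, combine, or a double edge is treated as a 2-cycle with $c^2_{uv}$. One must check that these contributions produce the stated coefficient $-2$ and no extra terms. Likewise, the parity of $n(Z)$ must give the negative sign uniformly in $k$.

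I would verify (a)--(c) by induction on $k$ using Figure~\ref{fig:G_nGluing}. The indexing of $g$ with the product over $j<n/2$ should be checked against small cases, $n=1,2,3$, using Figure~\ref{fig:Level3Graphs}. In those cases the conventions for empty products and for $c_0$ and $c_{-1}$ equal to $1$ must be confirmed.

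Summing the three groups of terms then gives \eqref{eq:CharPoly1}.
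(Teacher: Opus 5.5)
Your route is the paper's: apply Lemma~\ref{lem:DeterminantByRemovingVertices} at $u$, read off $(\lambda-4)c_nc_{n-1}$, the neighbour terms, and one cycle term per copy. Your description (b) of what remains after deleting the spine is correct. The flaw is in (a), which is exactly where the coefficient $-2$ is decided.

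There are no parallel edges on the spine and no binary choices: the simple path between the two boundary points of $G_k$ is unique. In Figure~\ref{fig:G_nGluing} the copy of $G_{k-1}$ is attached only at $v$, so a simple boundary-to-boundary path cannot enter it and must traverse the two copies of $G_{k-2}$. Recursively, the spine is a chain of copies of $G_0$ or $G_1$, each of which is a path. The double edges of $G_k$ occur only inside hung copies of $G_1$ and $G_2$.

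So for $k\ge 3$ each copy glued at $u$ carries exactly one simple cycle $Z$. It has $\pi(Z)=1$ and $n(Z)=2^{\lceil k/2\rceil}$, which is even, so the term is $2\cdot(-1)\cdot c\,g_k$ directly. As written, your (a) and (c) are incompatible. Having $2^r$ cycles of weight one (equivalently, one cycle with $\pi(Z)=2^r$) would give coefficient $-2^{r+1}$, and the stated formula would then be false.

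Parallel edges do matter, but in the neighbour term for small $n$, which your proposal (like the paper's proof) passes over. When the copy at $u$ is $G_1$ or $G_2$ (that is, $n=2,3$), its two boundary points share a single neighbour joined to $u$ by a double edge. That copy therefore gives one neighbour term with $c_{uv}^2=4$ and no cycle of length at least $3$. The formula survives because $d_2=g_2=c_1$ and $d_1=g_1=1$, so $-4d_kc=-2d_kc-2c\,g_k$, where $c$ is the polynomial of the other copy. Your claim that $u$ has four distinct neighbours, each joined by a single edge, is therefore true only for $n\ge 4$, and the cases $n=2,3$ must be handled this way.

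Your planned check at $n=1$ will not go through. There the copy of $G_0$ becomes a loop at $u$, the diagonal entry at $u$ is $2$ rather than $4$, and the identity would need the artificial value $d_0=-2$. The formula is only used for $n\ge 2$ later in the paper.
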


\begin{proof}
We apply Lemma~\ref{lem:DeterminantByRemovingVertices} to $u$ in Figure~\ref{fig:2WayGluing} using the fact that all vertices have degree $4$. When $u$ itself is deleted we obtain a disjoint union of one copy of $C_n$ and one copy of $C_{n-1}$, giving the term $(\lambda-4)c_nc_{n-1}$. When $u$ and a neighbor $v$ are deleted, we see that if $v$ is in the copy of $C_n$, then the result is a disjoint union of $D_n$ and $C_{n-1}$, while if $v$ is on the copy of $C_{n-1}$, then the result is a disjoint union of $D_{n-1}$ and $C_n$. Since there are two such neighbors on each copy, these terms appear with a factor of $2$. Finally, there are two simple cycles each with an even number of vertices (so $(-1)^{n(Z)-1}=-1$ in the Lemma), one in the copy of $G_{n-1}$ and one in the copy of $G_n$. When we delete the vertices on the simple cycle in the copy of $G_{n-1}$ we see that this graph decomposes into a disjoint union of one copy of $G_{n-2}$, two of $G_{n-4}$, and (inductively)  $2^{j-1}$ copies of $C_{n-2j}$ for each $j$ such that $2j < n$; there are also loops along this path which now have no vertices and therefore each have characteristic polynomial 1.  The product of the corresponding characteristic polynomials is $g_{n-1}$, and since deleting this cycle removes $u$ from the copy of $G_n$ we also get a factor $c_n$, finally leaving the term $-2c_ng_{n-1}$. Deleting the cycle in the copy of $G_n$ gives the term $-2c_{n-1}g_n$ in a similar manner, and adding the terms gives~\eqref{eq:CharPoly1}
\end{proof}

The same procedure used in  Proposition~\ref{prop:CharPolyfromBGJRT} was applied to the graphs $G_n$ and $C_n$ in~\cite{brzoska}. This led to recursions among the characteristic polynomials for the Laplacians on a somewhat larger family of graphs, from which various results  we will need later were established.  We summarize what we need in the following result.
\begin{lemma}[\protect{\cite{brzoska}}]
The following recursions and~\eqref{eqn:DefOfgn} can be used to determine all $b_n$, $c_n$, $d_n$ and $g_n$ from the initial data $c_0=1$, $c_1=\lambda-2$, $c_2=\lambda^2-6\lambda+4$.
\begin{gather}
b_n = (\lambda - 1)c_n - d_n \label{eqn:brzeqnfordnelim}, \quad n\geq1,\\
b_n=\frac{c_n}{c_{n-2}} b_{n-2} -g_n  \label{eqn:brzeqnforbnelim},\quad n\geq2, \\
\frac{c_n}{c_{n-2}}  =   \Bigl( \frac{c_{n-1}}{ c_{n-3}}\Bigr)^2 + 2  c_{n-1}g_{n-2}   -4c_{n-2} g_{n-1},\quad n\geq3. \label{eqn:brzeqnforc}
\end{gather}
\end{lemma}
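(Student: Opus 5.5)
The plan is to derive all three recursions by applying Lemma~\ref{lem:DeterminantByRemovingVertices} at well-chosen vertices, exactly as in the proof of Proposition~\ref{prop:CharPolyfromBGJRT}. I will first record the structural identity
\begin{equation*}
g_n = c_{n-1}\, g_{n-2}^2 .
\end{equation*}
It follows from~\eqref{eqn:DefOfgn} by separating the $j=1$ factor and reindexing. Geometrically, deleting the simple cycle through a gluing point leaves one copy of $C_{n-1}$ and two copies of the analogous structure at level $n-2$.

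\textbf{Step 1: the relation between $b_n$, $c_n$, $d_n$.} $B_n$ is $C_n$ together with one boundary vertex $x$ of $G_n$. In $G_n$ this vertex has a single edge to its unique neighbour $y$, so $d_x=1$, $c_{xy}=1$, and $x$ lies on no simple cycle. Lemma~\ref{lem:DeterminantByRemovingVertices} at $x$ then gives $b_n=(\lambda-1)\,D(C_n)-D(D_n)$, because deleting $x$ leaves $C_n$ and deleting $\{x,y\}$ leaves $D_n$. This is exactly the first relation, valid for $n\ge1$.

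\textbf{Step 2: the recursion for $b_n$.} View $G_n$ as in Figure~\ref{fig:G_nGluing} and apply the lemma at the gluing vertex $v$, once in $C_n$ and once in $B_n$. The deleted boundary point of $B_n$ is a boundary point of one copy of $G_{n-2}$. The key observation is that every term in the expansion at $v$ factors as (data from $G_{n-1}$ and the other $G_{n-2}$ copy) times (data from the distinguished $G_{n-2}$ copy). Specifically:
\begin{itemize}
\item the $(\lambda-4)$ term and the terms where the deleted neighbour lies off the distinguished copy contribute a factor $c_{n-2}$ for $C_n$ and $b_{n-2}$ for $B_n$;
\item the neighbour terms on the distinguished copy contribute $d_{n-2}$, respectively its ``one more deleted point'' analogue;
\item the cycle term through the distinguished copy contributes a $g$-type factor.
\end{itemize}
So $c_n$ and $b_n$ are the same affine expression in the data of that copy. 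Forming $b_n - \frac{c_n}{c_{n-2}} b_{n-2}$ cancels every term proportional to $c_{n-2}$ versus $b_{n-2}$. The remaining terms are then simplified using Step~1 at level $n-2$ (to express the $d$-type terms through $b$ and $c$) together with $g_n=c_{n-1}g_{n-2}^2$. I expect the residue to collapse to $-g_n$; geometrically, this is the cycle through $v$ whose removal disconnects the graph into the pieces counted by $g_n$.

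\textbf{Step 3: the recursion for $c_n/c_{n-2}$, and the main obstacle.} Expand $c_n$ at $v$ again. Divide by $c_{n-2}$, use Step~2 and Step~1 at levels $n-1$ and $n-2$ to eliminate $b$ and $d$, and use $g_{n-1}=c_{n-2}g_{n-3}^2$ to rewrite cycle terms. The squared ratio $(c_{n-1}/c_{n-3})^2$ should appear because $G_{n-1}$ and the pair of $G_{n-2}$'s glued at $v$ reproduce the level-$(n-1)$ expansion twice.

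I expect this algebraic elimination to be the main obstacle. Keeping track of which cycle terms survive, with the right signs and the coefficients $2$ and $-4$, requires care. I would first verify the identity symbolically for $n=3,4$ from the initial data $c_0,c_1,c_2$, and use that to fix the bookkeeping before doing the general induction.
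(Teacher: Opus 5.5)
Your Step~1 is exactly the derivation the paper points to, and your strategy for Step~3 works. Step~2, however, has a genuine gap. Let $A$ be the copy of $G_{n-2}$ containing the surviving boundary vertex $x$ of $B_n$, and let $a$ be the neighbour of $v$ in $A$.

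Each copy of $G_{n-2}$ meets the rest of $G_n$ only at $v$. So the only simple cycle through $v$ lies in the copy of $G_{n-1}$, and no cycle passes through the distinguished copy, contrary to your bullet list and your heuristic. That cycle term is $-2g_{n-1}c_{n-2}^2$ in $c_n$ and $-2g_{n-1}c_{n-2}b_{n-2}$ in $b_n$, and it cancels along with every other term proportional to the $A$-factor. The only term that survives is the one deleting $\{v,a\}$. It equals $-c_{n-1}c_{n-2}d_{n-2}$ in $c_n$ but $-c_{n-1}c_{n-2}e_{n-2}$ in $b_n$. Here $e_k$ is the characteristic polynomial of $G_k$ with one boundary point and its neighbour deleted and the other boundary point kept. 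Hence
\begin{equation*}
b_n-\frac{c_n}{c_{n-2}}\,b_{n-2}=c_{n-1}\bigl(b_{n-2}d_{n-2}-c_{n-2}e_{n-2}\bigr).
\end{equation*}
Neither \eqref{eqn:brzeqnfordnelim} nor $g_n=c_{n-1}g_{n-2}^2$ removes $e_{n-2}$. Expanding at the kept boundary point as in Step~1 only gives $e_k=(\lambda-1)d_k-f_k$, where $f_k$ is yet another new polynomial (both boundary points and both their neighbours deleted). That leaves you with $c_kf_k-d_k^2$, no closer to $-g_n$.

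The missing ingredient is the identity $b_kd_k-c_ke_k=-g_k^2$. One route is the Desnanot--Jacobi (Dodgson condensation) identity for the symmetric matrix $M=\lambda I-L$ of $B_k$, taken at the rows and columns of $x$ and $a$. It gives $b_kd_k-c_ke_k=-(\det M_{x,a})^2$, where $M_{x,a}$ has row $x$ and column $a$ removed. Moreover $\det M_{x,a}=\pm g_k$: the unique simple path from $x$ to $a$ is the boundary-to-boundary path of $G_k$ minus one endpoint, and deleting it leaves exactly the pieces counted by $g_k$, as in the proof of Proposition~\ref{prop:CharPolyfromBGJRT}.

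Alternatively, you can stay inside your own framework and induct. Let $Q$ be the symmetric bilinear form with matrix $\bigl(\begin{smallmatrix}\alpha&-c_{k-1}\\-c_{k-1}&0\end{smallmatrix}\bigr)$, where $\alpha=(\lambda-4)c_{k-1}-2d_{k-1}-2g_{k-1}$. Put $s=(c_{k-2},d_{k-2})$ and $t=(d_{k-2},f_{k-2})$. Expanding at the gluing point gives $c_k=Q(s,s)$, $d_k=Q(s,t)$ and $f_k=Q(t,t)$. Hence $c_kf_k-d_k^2=-c_{k-1}^2\bigl(c_{k-2}f_{k-2}-d_{k-2}^2\bigr)^2$. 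Either way Step~2 closes, after separate checks of the small cases where $G_{n-2}=G_0$ or the two boundary edges of $G_{n-1}$ share a neighbour.

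Step~3, which you single out as the main obstacle, is then routine. Eliminating $d$ by Step~1, the expansion at $v$ reads $\frac{c_n}{c_{n-2}}=-3\lambda c_{n-1}c_{n-2}+2c_{n-2}(b_{n-1}-g_{n-1})+2c_{n-1}b_{n-2}$. Substitute \eqref{eqn:brzeqnforbnelim} for $b_{n-1}$ and write $c_{n-1}=\frac{c_{n-1}}{c_{n-3}}c_{n-3}$. The right side then factors as $\frac{c_{n-1}}{c_{n-3}}\bigl(\frac{c_{n-1}}{c_{n-3}}+2c_{n-3}g_{n-2}\bigr)-4c_{n-2}g_{n-1}$, which is \eqref{eqn:brzeqnforc}.

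For comparison, the paper does not rederive any of this; it cites~\cite{brzoska}. There the $b_n$ recursion comes from applying Lemma~\ref{lem:DeterminantByRemovingVertices} to an auxiliary family of graphs, not from expanding at $v$.
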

\begin{proof}
The expression~\eqref{eqn:brzeqnfordnelim} is in the proof of Proposition~3.2 of~\cite{brzoska} for $n\geq 3$ but can be derived for $n\geq1$ by carefully applying Lemma~\ref{lem:DeterminantByRemovingVertices} to $B_n$ with $u$ a boundary vertex or can be checked from other results in~\cite{brzoska}: for $n=1$ use their equations~(3.4) and $d_1=1$ and for $n=2$ use their equations~(3.5) and $d_2=c_1=(\lambda-2)$.
The expression~\eqref{eqn:brzeqnforbnelim} is equation~(3.11) in the proof of Proposition~3.3 of~\cite{brzoska} where it is obtained by applying Lemma~\ref{lem:DeterminantByRemovingVertices} to a sequence of graphs not discussed in the present paper.  The expression~\eqref{eqn:brzeqnforc} is equation~(3.7) in~\cite{brzoska}.

We observe that the initial data and~\eqref{eqn:DefOfgn} allows us to compute all $c_n$. Then~\eqref{eqn:brzeqnforbnelim} yields all $b_n$ and we can use~\eqref{eqn:brzeqnfordnelim} to obtain all $d_n$.
\end{proof}

The purpose for which these and other identities were introduced in~\cite{brzoska} was to prove a recursion for the  polynomials $\gamma_n$ that could be used to study properties of the Dirichlet Laplacian spectra $\Dirspec(\Delta(G_n))$.  We note that it suffices to understand the roots of these polynomials, which are simple, because the multiplicities of the eigenvalues can be read from Theorem~\ref{thm:Brzfmlaforcn}.  The recursion from~\cite{brzoska} is stated in the following theorem both because it is used in later computations and in order that it may be compared to the recursion in Theorem~\ref{thm:mainrecursion}, which also includes the $\psi_n$ factors.

\begin{theorem}[{\protect \cite{brzoska}} Corollary~3.16]\label{thm:brzrecursion}
For $n\geq 4$ the polynomials $\gamma_n$ may be computed recursively as follows.  Define, for $n\geq1$,
\begin{equation}
	\zeta_n=\frac{\gamma_n}{\eta_n}, \text{ and }  \eta_n=\gamma_{n-1}\prod_{0\leq2j\leq{n-4}} \gamma_{n-2j-3}^{2^j}.
\end{equation}
Then $\zeta_n$ has the same roots as $\gamma_n$ and, for $n\geq4$, satisfies the recursion
\begin{equation}\label{eqn:brzrecursion}
\zeta_n-2 = \Bigl(1+\frac2{\zeta_{n-1}} \Bigr) (\zeta_{n-2}^2 -4)
\end{equation}
with initial data
\begin{equation*}
	\zeta_2=\frac{\lambda^2-6\lambda+4}{\lambda-2}, \qquad \zeta_3=\frac{\lambda^4-12\lambda^3+42\lambda^2-44\lambda+8}{\lambda^2-6\lambda+4}.
\end{equation*}
The remaining $\gamma_n$ is $\gamma_1=(\lambda-2)$.
\end{theorem}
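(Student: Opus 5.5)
The plan is to derive \eqref{eqn:brzrecursion} from the recursion \eqref{eqn:brzeqnforc} for the characteristic polynomials $c_n$. The first step is to turn \eqref{eqn:brzeqnforc} into a recursion for a single normalized rational function. Then the product formula of Theorem~\ref{thm:Brzfmlaforcn} identifies that function with $\gamma_n/\eta_n$.

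\textbf{Step 1: a recursion for $g_n$.} From \eqref{eqn:DefOfgn} we have
\begin{equation*}
g_{n+1}=c_n\,g_{n-1}^2 .
\end{equation*}
This follows by pulling out the $j=1$ factor and reindexing $j\mapsto j-1$ in the remaining product.

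\textbf{Step 2: normalize \eqref{eqn:brzeqnforc}.} Write $x_n=c_n/c_{n-2}$ and look for a normalization $\zeta_n=x_n/w_n$ that makes \eqref{eqn:brzeqnforc} homogeneous. The natural candidate is $w_n=c_{n-1}g_{n-2}$, possibly up to a constant.
\begin{itemize}
\item Dividing \eqref{eqn:brzeqnforc} by $w_n$, Step 1 should turn $x_{n-1}^2/w_n$ into a multiple of $\zeta_{n-1}^2$.
\item The terms $2c_{n-1}g_{n-2}/w_n$ and $4c_{n-2}g_{n-1}/w_n$ should become constants or simple powers of the $\zeta$'s.
\item The shape of \eqref{eqn:brzrecursion}, namely $\zeta_n-2=(1+2/\zeta_{n-1})(\zeta_{n-2}^2-4)$, suggests how to proceed. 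I would iterate the identity once, substituting $x_{n-1}$ from the previous instance of \eqref{eqn:brzeqnforc}, so that $\zeta_n$ is expressed through $\zeta_{n-1}$ and $\zeta_{n-2}$.
\end{itemize}
I expect the exact normalization to need some trial. The test is that the $\pm2$ and $\pm4$ constants come out right on the initial values $c_0=1$, $c_1=\lambda-2$, $c_2=\lambda^2-6\lambda+4$. Those values should also reproduce the stated $\zeta_2$ and $\zeta_3$, using $\zeta_2=\gamma_2/\gamma_1$ and $\zeta_3=\gamma_3/\gamma_2$.

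\textbf{Step 3: identify $\zeta_n$ with $\gamma_n/\eta_n$.} Substitute $c_n=\gamma_n\prod_{k<n}\gamma_k^{S_{n-k}}$ into $c_n/(c_{n-2}c_{n-1}g_{n-2})$ and compute the resulting exponent of each $\gamma_k$.
\begin{itemize}
\item The exponent of $\gamma_n$ is $1$.
\item The exponents of the lower $\gamma_k$ should be $-1$ for $k=n-1$, $-2^j$ for $k=n-2j-3$, and $0$ otherwise. These come from linear identities among the $S_m$, such as $S_m-S_{m-2}-S_{m-1}-\cdots$, together with the $2^{j-1}$ weights in $g$.
\item These checks use the closed form of $S_m$ and a split into the parities of $n-k$. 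This reproduces $\eta_n=\gamma_{n-1}\prod\gamma_{n-2j-3}^{2^j}$.
\end{itemize}

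\textbf{Step 4: same roots.} The claim that $\zeta_n$ has the same roots as $\gamma_n$ follows because the roots of $\gamma_n$ are born at level $n$. Hence they are not roots of any $\gamma_k$ with $k<n$, so no cancellation occurs between numerator and denominator.

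\textbf{Main obstacle.} I expect the main obstacle to be the bookkeeping in Step 3, that is, matching exponents of the $S_m$. Equally delicate is finding in Step 2 the normalization whose constants produce exactly $(1+2/\zeta_{n-1})$. I would fix that normalization by checking $n=4$ explicitly before doing the general algebra.
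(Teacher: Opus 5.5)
Your plan is correct and would give a complete proof. The paper itself does not prove this statement: it imports it as Corollary~3.16 of~\cite{brzoska}. Your argument is therefore a self-contained reconstruction from two other imported facts, the recursion \eqref{eqn:brzeqnforc} and the product formula of Theorem~\ref{thm:Brzfmlaforcn}.

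Your Step~3 is exactly the paper's Lemma~\ref{lem:C_nG_nToZeta}, since Step~1 gives $g_{n-2}/g_n=1/(c_{n-1}g_{n-2})$. The paper proves that lemma by the same exponent count, but it takes the exponents of $g_n/g_{n-2}$ from~\cite{brzoska}, whereas you would compute them directly. A tidy way to do this: let $e_m$ be the exponent of $\gamma_{n-m}$ in $c_n/(c_{n-2}c_{n-1}g_{n-2})$. Then $e_m=2e_{m-2}$ for $m\geq4$, because the sequence $S_m$ is annihilated by $(x-2)(x-1)(x+1)^2$. Together with $e_2=0$ and $e_3=-1$, this avoids the parity split with closed forms.

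The step you flagged as needing trial in fact needs none. The normalization $w_n=c_{n-1}g_{n-2}$ works exactly, with no constant:
\begin{itemize}
\item Step~1 gives $c_{n-2}g_{n-1}=w_{n-1}^2$. So dividing \eqref{eqn:brzeqnforc} by $w_n$ yields $\zeta_n-2=\frac{w_{n-1}^2}{w_n}(\zeta_{n-1}^2-4)$ for $n\geq3$.
\item This has $\zeta_{n-1}^2$ rather than $\zeta_{n-2}^2$, and a prefactor that is not a function of the $\zeta$'s. To fix this, factor $\zeta_{n-1}^2-4=(\zeta_{n-1}-2)(\zeta_{n-1}+2)$. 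Then replace $\zeta_{n-1}-2$ by the $(n-1)$ instance, $\frac{w_{n-2}^2}{w_{n-1}}(\zeta_{n-2}^2-4)$.
\item Use $w_n=c_{n-1}g_{n-2}=x_{n-1}c_{n-3}\cdot c_{n-3}g_{n-4}^2=\zeta_{n-1}w_{n-1}w_{n-2}^2$. The prefactor then collapses to $1/\zeta_{n-1}$, which gives \eqref{eqn:brzrecursion}.
\end{itemize}
The result holds for $n\geq4$ because two instances of \eqref{eqn:brzeqnforc} are used. Your Step~4 is also fine: roots of $\gamma_n$ are born at level $n$, so they cannot cancel against roots of the lower $\gamma_k$ in $\eta_n$.

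The citation buys brevity. Your derivation buys transparency: it shows the $\zeta$-dynamics is just a renormalization of \eqref{eqn:brzeqnforc}, with Lemma~\ref{lem:C_nG_nToZeta} supplying the dictionary.
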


\begin{figure}
\begin{centering}
\includegraphics{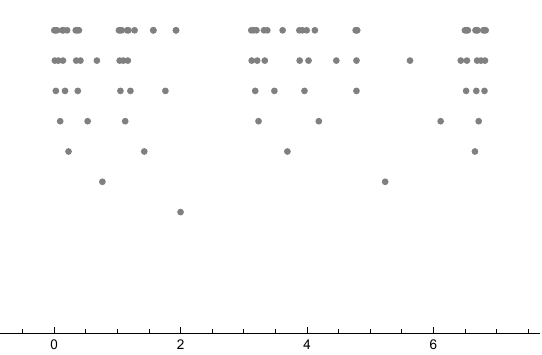}
\caption{The $2$-series eigenvalues of $\Gamma_{10}$, arranged so that the roots of $\gamma_{n-2}$ occur on the horizontal line at height $n$ for $n=0,\dotsc,10$ (the first three such lines being empty).}\label{fig:twoseries}
\end{centering}
\end{figure}

Using the recursion in Theorem~\ref{thm:brzrecursion} it is fairly quick to generate the eigenvalues in the $2$-series. Figure~\ref{fig:twoseries} shows all $2$-series eigenvalues of $\Gamma_{10}$ with each horizontal row corresponding to the level at which the eigenvalue was born. The first three rows are empty, as there are no $2$-series eigenvalues for $n=0,1,2$. The first occurrence of a $2$-series eigenvalue is at $n=3$, where we see the single eigenvalue $2$. Thereafter, the number of $2$-series eigenvalues at level $n$ is the degree of $\gamma_{n-2}$, which was determined in Proposition~3.17 of~\cite{brzoska}.

\subsection{Combined dynamics for $\gamma_n$ and $\psi_n$ factors}\label{subsec:Dynamics}

Having established the structure within which the dynamics for the factors $\gamma_n$ of the characteristic polynomials of $\Delta(G_n)$ can be understood and connected to the factors in the characteristic polynomial $P_n$ of $\Delta(\Gamma_n)$ we are in a position to calculate recursions that let us determine the roots of the $\psi_n$ factors in $P_n$.

\begin{proposition}\label{prop:CharPolyRecursion2}
The characteristic polynomials $P_n$ and $c_n$ may be obtained from the recursions
\begin{equation}\label{eq:P_nRecursionInC_n}
P_n=P_{n-1}\frac{c_n}{c_{n-2}}+2c_{n-1}g_{n-2}\biggl(\frac{c_n}{c_{n-2}}-2\frac{g_n}{g_{n-2}}\biggr), \quad n\geq 3
\end{equation}
and by using the initial data
\begin{equation*}
P_0 = \lambda,\quad
P_1 = \lambda^2 - 4\lambda,\quad
P_2 = \lambda^4-12\lambda^3+40\lambda^2-32\lambda.
\end{equation*}
\end{proposition}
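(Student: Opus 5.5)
Starting from Proposition~\ref{prop:CharPolyfromBGJRT}, I will eliminate $d_n$ and $d_{n-1}$ using the recursions \eqref{eqn:brzeqnfordnelim} and \eqref{eqn:brzeqnforbnelim}. This should express $P_n-P_{n-1}\frac{c_n}{c_{n-2}}$ through $c$'s and $g$'s only.

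First I substitute $d_k=(\lambda-1)c_k-b_k$ into \eqref{eq:CharPoly1}:
\begin{equation*}
P_n=(\lambda-4)c_nc_{n-1}-2(\lambda-1)c_nc_{n-1}\cdot 2/2\cdot 2+2b_{n-1}c_n+2b_nc_{n-1}-2c_ng_{n-1}-2c_{n-1}g_n .
\end{equation*}
Collecting terms gives $P_n=(-3\lambda)c_nc_{n-1}+2b_{n-1}c_n+2b_nc_{n-1}-2c_ng_{n-1}-2c_{n-1}g_n$. The precise constant in front of $c_nc_{n-1}$ will be checked carefully, since it must cancel.

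Next I write the same formula for $P_{n-1}$ and multiply it by $c_n/c_{n-2}$. The term $c_{n-1}c_{n-2}\cdot c_n/c_{n-2}=c_nc_{n-1}$ then cancels the corresponding term of $P_n$. Likewise $2b_{n-2}c_{n-1}\cdot c_n/c_{n-2}$ becomes $2c_{n-1}(b_n+g_n)$ by \eqref{eqn:brzeqnforbnelim}, which cancels $2b_nc_{n-1}$ and leaves $2c_{n-1}g_n$. What remains is
\begin{equation*}
P_n-\frac{c_n}{c_{n-2}}P_{n-1}=2b_{n-1}c_n-2b_{n-1}c_{n-1}\frac{c_n}{c_{n-2}}\cdot\frac{c_{n-2}}{c_{n-1}}+\dots
\end{equation*}
In fact the $b_{n-1}$ terms, namely $2b_{n-1}c_n$ from $P_n$ and $2c_{n-2}b_{n-1}\cdot c_n/c_{n-2}=2b_{n-1}c_n$ from the scaled $P_{n-1}$, cancel directly. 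The leftover terms are
\begin{equation*}
-2c_ng_{n-1}-2c_{n-1}g_n+2c_{n-1}g_n+\frac{c_n}{c_{n-2}}\bigl(2c_{n-1}g_{n-2}+2c_{n-2}g_{n-1}\bigr),
\end{equation*}
where the last bracket is minus the $g$-terms of $P_{n-1}$. This simplifies to $2c_{n-1}g_{n-2}\frac{c_n}{c_{n-2}}$. To match the claimed formula I then need to account for the term $-4c_{n-1}g_{n-2}g_n/g_{n-2}=-4c_{n-1}g_n$. The sign bookkeeping in this step is exactly where care is required: the $-2c_{n-1}g_n$ from $P_n$ and the $+2c_{n-1}g_n$ from the $b_n$ substitution must be tracked precisely, and the net should come out to $-4c_{n-1}g_n$.

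The main obstacle is this sign and coefficient bookkeeping, together with the applicability of \eqref{eqn:brzeqnforbnelim} (it needs $n\ge2$, and applying it to $P_{n-1}$ as well forces $n\ge3$). I will redo the computation systematically, writing $P_n=\alpha c_nc_{n-1}+2(b_{n-1}c_n+b_nc_{n-1})-2(c_ng_{n-1}+c_{n-1}g_n)$ with $\alpha=\lambda-4-4(\lambda-1)$. I expect a relation among the $g_k$ to be needed, namely $g_n=c_{n-1}g_{n-2}^{\,2}/\dots$ read off from \eqref{eqn:DefOfgn}, or possibly \eqref{eqn:brzeqnforc}.

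Finally, the initial data $P_0,P_1,P_2$ come from direct computation on $\Gamma_0,\Gamma_1,\Gamma_2$, or from \eqref{eq:CharPoly1} with the known small $c_n,d_n,g_n$. The $c_n$ are produced by \eqref{eqn:brzeqnforc} together with \eqref{eqn:DefOfgn}, as stated in the preceding lemma.
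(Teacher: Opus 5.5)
Your route is exactly the paper's. You eliminate $d_n,d_{n-1}$ via \eqref{eqn:brzeqnfordnelim}, which gives $P_n=-3\lambda c_nc_{n-1}+2c_{n-1}(b_n-g_n)+2c_n(b_{n-1}-g_{n-1})$. You do the same for $P_{n-1}$, scale by $c_n/c_{n-2}$, subtract, and apply \eqref{eqn:brzeqnforbnelim} to $b_n$. But as written your computation does not close, because of a sign slip.

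The residual $+2c_{n-1}g_n$ comes from rewriting $2c_{n-1}\frac{c_n}{c_{n-2}}b_{n-2}=2c_{n-1}(b_n+g_n)$. It sits inside the \emph{scaled $P_{n-1}$}, which is being subtracted. In your list of leftovers you negated the $g$-terms of $P_{n-1}$, but not this term. Correctly,
\begin{equation*}
2c_{n-1}b_n-2c_{n-1}\frac{c_n}{c_{n-2}}b_{n-2}=2c_{n-1}\bigl(b_n-(b_n+g_n)\bigr)=-2c_{n-1}g_n ,
\end{equation*}
so the leftover is
\begin{equation*}
-2c_ng_{n-1}-2c_{n-1}g_n-2c_{n-1}g_n+2c_ng_{n-1}+2c_{n-1}g_{n-2}\frac{c_n}{c_{n-2}}
=2c_{n-1}g_{n-2}\Bigl(\frac{c_n}{c_{n-2}}-2\frac{g_n}{g_{n-2}}\Bigr),
\end{equation*}
which is precisely \eqref{eq:P_nRecursionInC_n}.

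Your stated reconciliation, that ``$-2c_{n-1}g_n$ and $+2c_{n-1}g_n$'' should net to $-4c_{n-1}g_n$, is arithmetically impossible. The remedy you propose, looking for a relation among the $g_k$ from \eqref{eqn:DefOfgn} or invoking \eqref{eqn:brzeqnforc}, is a red herring. Nothing beyond \eqref{eqn:brzeqnfordnelim} and a single application of \eqref{eqn:brzeqnforbnelim} is needed.

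Two minor points:
\begin{itemize}
\item The restriction $n\geq 3$ comes from eliminating $d_{n-2}$ inside $P_{n-1}$ with \eqref{eqn:brzeqnfordnelim}, which needs $n-2\geq 1$. It does not come from applying \eqref{eqn:brzeqnforbnelim} to $P_{n-1}$, which you never do.
\item $P_0$ has to be computed directly, since \eqref{eq:CharPoly1} is only stated for $n\geq 1$.
\end{itemize}
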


\begin{proof}
If $n\geq2$ we can use~\eqref{eqn:brzeqnfordnelim}  to eliminate $d_n$ and $d_{n-1}$ from~\eqref{eq:CharPoly1} and write
\begin{equation*}
P_n = -3\lambda c_n c_{n-1} + 2c_{n-1}(b_n - g_n) + 2c_n(b_{n-1} - g_{n-1})
\end{equation*}
Since $n\geq3$ we can also apply this to $P_{n-1}$ and obtain
\begin{equation*}
P_{n-1}\frac{c_n}{c_{n-2}} = -3\lambda c_{n}c_{n-1} + 2c_n(b_{n-1}-g_{n-1}) + 2c_{n-1}\frac{c_n}{c_{n-2}}(b_{n-2}-g_{n-2}),
\end{equation*}
Their difference is then
\begin{equation*}
P_n-P_{n-1}\frac{c_n}{c_{n-2}} = 2c_{n-1}(b_n-g_n)-2c_{n-1}\frac{c_n}{c_{n-2}}(b_{n-2}-g_{n-2}).
\end{equation*}
and simplifying using~\eqref{eqn:brzeqnforbnelim} (applied to $b_n$, so $n\geq 3$ is sufficient here) recovers~\eqref{eq:P_nRecursionInC_n}.  The initial polynomials may be computed directly from the Laplacians of the initial graphs.
\end{proof}

Before we derive recursions for $\gamma_n$ and $\psi_n$ we pause to record a useful identity.

\begin{lemma}\label{lem:C_nG_nToZeta}
For $n \geq 2$, we have the identity
\begin{equation*}
\frac{c_n}{c_{n-2}} \cdot \frac{g_{n-2}}{g_n} = \zeta_n.
\end{equation*}
\end{lemma}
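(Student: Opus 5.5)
The plan is to reduce both sides to products of the polynomials $\gamma_k$ and compare exponents, using Theorem~\ref{thm:Brzfmlaforcn} for $c_n$ and the definition~\eqref{eqn:DefOfgn} of $g_n$.

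First I simplify the ratio $g_{n-2}/g_n$. By~\eqref{eqn:DefOfgn}, with the index shifted,
\begin{equation*}
g_n=\prod_{1\leq j<\frac{n+1}2} c_{n+1-2j}^{2^{j-1}} = c_{n-1}\,c_{n-3}^{2}\,c_{n-5}^{4}\cdots .
\end{equation*}
Comparing this with $g_{n-2}^2=c_{n-3}^2c_{n-5}^4\cdots$, and substituting $j'=j+1$ to check that the index ranges agree, gives
\begin{equation*}
g_n=c_{n-1}g_{n-2}^2 .
\end{equation*}
Hence the left side of the lemma equals
\begin{equation*}
\frac{c_n}{c_{n-2}\,c_{n-1}\,g_{n-2}} .
\end{equation*}
It therefore suffices to prove
\begin{equation*}
c_n\,\eta_n=\gamma_n\,c_{n-1}\,c_{n-2}\,g_{n-2}
\end{equation*}
as an identity of polynomials.

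Next I write everything in terms of the $\gamma_k$ with $k\geq1$. Theorem~\ref{thm:Brzfmlaforcn} gives the exponent of $\gamma_k$ in $c_n$ as $S_{n-k}$, where $S_0=1$ (this agrees with the formula), $S_1=0$, and $S_m=0$ for $m<0$. For $g_{n-2}=\prod_{j\geq1}c_{n-1-2j}^{2^{j-1}}$, the exponent of $\gamma_k$ is $\sum_{j\geq1}2^{j-1}S_{n-1-2j-k}$. Set $m=n-k$. Reading off the exponents of $\gamma_n/\eta_n$ from the definition of $\eta_n$, the required identity becomes
\begin{equation*}
S_m-S_{m-1}-S_{m-2}-\sum_{j\geq1}2^{j-1}S_{m-1-2j}=\delta_{m,0}-\delta_{m,1}-\sum_{j\geq0}2^j\delta_{m,2j+3}, \qquad m\geq0 .
\end{equation*}
A few cases check directly. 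For $m=0$ both sides are $1$. For $m=1$ both are $-1$. For $m=2$, $S_2=1$, so the left side is $1-0-1=0$. For $m=3$, $S_3=1$, so the left side is $1-1-0-S_0=-1$, matching the right side.

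To prove this identity for all $m$ I would use generating functions. Since $S_m$ is a combination of $1$, $(-1)^m$, $m(-1)^m$ and $2^m$, the series $S(x)=\sum_m S_mx^m$ is the rational function with denominator $(1-x)(1+x)^2(1-2x)$. The left side is the coefficient sequence of
\begin{equation*}
S(x)\Bigl(1-x-x^2-\frac{x^3}{1-2x^2}\Bigr),
\end{equation*}
and the right side has generating function
\begin{equation*}
1-x-\frac{x^3}{1-2x^2}.
\end{equation*}
So the claim reduces to an identity between rational functions, which I would verify by clearing denominators. If that turns out to be messy, an alternative is to note that both sides satisfy the same linear recurrence (for example from multiplying by $1-2x^2$) and compare finitely many initial values, as in the checks above.

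The main obstacle is bookkeeping rather than ideas. Several index ranges have to line up: the range of $j$ in $g_n$ and $\eta_n$, the conventions $\gamma_0=\gamma_{-1}=\gamma_{-2}=1$, and the fact that Theorem~\ref{thm:Brzfmlaforcn} only involves $\gamma_k$ for $k\geq1$. For the smallest cases, $n=2$ and $n=3$, where products are empty and $g_0$, $g_1$ need care, I would also check the lemma directly from the initial data in Theorem~\ref{thm:brzrecursion}.
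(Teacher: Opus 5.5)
Your proof is correct. It shares the paper's overall strategy of writing both sides as products of the $\gamma_k$ and comparing exponents, but it handles the $g$-terms differently.

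The paper imports from the proof of Proposition~3.14 of~\cite{brzoska} the closed-form exponents $\beta_k$ of $g_n/g_{n-2}$. It then checks $\alpha_k-\beta_k$ against the definition of $\eta_n$, where $\alpha_k=S_{n-k}-S_{n-k-2}$. You instead read off $g_n=c_{n-1}g_{n-2}^2$ directly from~\eqref{eqn:DefOfgn}. This turns the $g$-contribution into a finite combination of $c$'s and reduces everything to a single linear identity among the $S_m$. Your route uses nothing from~\cite{brzoska} beyond Theorem~\ref{thm:Brzfmlaforcn}. It also explains the paper's $\beta_k$, which are simply the exponents of $c_{n-1}g_{n-2}$. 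What the paper's version gives in return is explicit closed forms for the exponents of $c_n/c_{n-2}$ and $g_n/g_{n-2}$ separately.

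The generating-function check you left pending does go through cleanly. From the closed form of $S_m$ one gets $S(x)=\frac{1-x-2x^2+x^3}{(1-x)(1+x)^2(1-2x)}$. Multiply your identity by $1-2x^2$. The factor multiplying $S(x)$ becomes $(1-x-x^2)(1-2x^2)-x^3=1-x-3x^2+x^3+2x^4$, which equals $(1-x)(1+x)^2(1-2x)$. The right side becomes $(1-x)(1-2x^2)-x^3=1-x-2x^2+x^3$. So the identity is exactly the closed form of $S(x)$.

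Under your conventions ($S_0=1$, $S_m=0$ for $m<0$, and $c_0=g_0=g_1=1$ as empty products), the exponent identity holds for every $m\geq 0$. The separate checks of $n=2,3$ are therefore unnecessary, though harmless.
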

\begin{proof}
All terms are products of powers of $\gamma_k$.  From Theorem~\ref{thm:brzrecursion} we have $\frac{c_n}{c_{n-2}}=\gamma_n\prod_1^{n-3} \gamma_k^{\alpha_k}$ with $\alpha_k=S_{n-k}-S_{n-k-2}=\frac13(2^{n-k-2}-(-1)^{n-k})$. The computation for the $g_n$ factors is more involved, but in the proof of Proposition~3.14 of~\cite{brzoska} it was shown that $\frac{g_n}{g_{n-2}} =\gamma_{n-1}\prod_1^{n-3}\gamma_k^{\beta_k}$ with 
\begin{equation*}
	\beta_k = \begin{cases} \frac13\bigl(2^{(n-k-2)}+1\bigr)+2^{(n-k-3)/2} &\text{ if $n-k$ is odd,}\\
			\frac13\bigl( 2^{(n-k-2)} -1\bigr)  &\text{ if $n-k$ is even}.
			\end{cases}
	\end{equation*}
Checking that $\alpha_k-\beta_k = - 2^{(n-k-3)/2}$ when $n-k$ is odd and zero when $n-k$ is even and comparing this to the definition of $\zeta_n$ in Theorem~\ref{thm:brzrecursion} gives the desired result.
\end{proof}

Given that the recursion from~\cite{brzoska} in Theorem~\ref{thm:brzrecursion} provides the $\zeta_n$ polynomials for all $n$ we can compute the roots of the polynomials $\psi_n$ using the following result.
\begin{theorem}\label{thm:mainrecursion}
For $n\geq 5$ the rational function $\Zeta_n=\frac{\psi_n\gamma_{n-2}}{\gamma_n}$ satisfies the recursion
\begin{equation}
 \Zeta_n-1 = \frac1{\Zeta_{n-1}}\biggl(1-\frac1{\Zeta_{n-2}}\biggr)\biggl(1-\frac2{\zeta_n}\biggr)\frac1{\zeta_{n-2}-2}
\end{equation}
where the equality is valid at the poles in the usual sense of rational functions and the initial data is
\begin{align*}
\Zeta_3&=\frac{(\lambda^3-10\lambda^2+24\lambda-8)(\lambda-2)}{\lambda^4-12\lambda^3+42\lambda^2-44\lambda+8}\\
\Zeta_4&=\frac{(\lambda^5-16\lambda^4+88\lambda^3-192\lambda^2+136\lambda-16)(\lambda^2-6\lambda+4)}{\lambda^7-22\lambda^6+186\lambda^5-756\lambda^4+1508\lambda^3-1344\lambda^2+448\lambda-32}.
\end{align*}
The remaining $\psi_n$ are $\psi_1=\lambda(\lambda-4)$ and $\psi_2=\lambda^2-8\lambda+8$.
\end{theorem}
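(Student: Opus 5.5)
The plan is to turn the recursion \eqref{eq:P_nRecursionInC_n} of Proposition~\ref{prop:CharPolyRecursion2} into a recursion for the ratios $R_n=P_n/P_{n-1}$. Then we express both $R_n$ and the correction term through $\zeta_n$ and $\Zeta_n$.

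First, Theorem~\ref{thm:FullFactorization} gives
\begin{equation*}
\frac{P_n}{P_{n-1}}=\psi_n\prod_{k=0}^{n-1}\gamma_{k-2}^{\sigma_{n-k}-\sigma_{n-1-k}} .
\end{equation*}
Similarly, $c_n/c_{n-2}$ is a product of powers of $\gamma_k$ by Theorem~\ref{thm:Brzfmlaforcn}, with exponents $\alpha_k=S_{n-k}-S_{n-k-2}$ as in the proof of Lemma~\ref{lem:C_nG_nToZeta}. The exponents $\sigma_{j}-\sigma_{j-1}$ and $\alpha_k$ are nearly equal: both are $\frac13(2^{j-1}\pm1)$-type expressions, and $\sigma_j-\sigma_{j-1}=S_j+S_{j-1}-S_{j-1}-S_{j-2}=S_j-S_{j-2}$ after a shift of index, using the identity $\sigma_{m-2}=S_m+S_{m-1}$ from Proposition~\ref{prop:Pnusingcn}. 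So I expect
\begin{equation*}
\frac{P_n}{P_{n-1}}\cdot\frac{c_{n-2}}{c_n}=\frac{\psi_n\gamma_{n-2}}{\gamma_n}=\Zeta_n
\end{equation*}
up to checking the top indices, which are exactly the $\psi_n$, $\gamma_{n-2}$ and $\gamma_n$ terms. This identifies $\Zeta_n$ with the natural ratio. Dividing \eqref{eq:P_nRecursionInC_n} by $P_{n-1}c_n/c_{n-2}$ and using Lemma~\ref{lem:C_nG_nToZeta} then gives
\begin{equation*}
\Zeta_n-1=\frac{2c_{n-1}g_{n-2}}{P_{n-1}}\Bigl(1-\frac{2}{\zeta_n}\Bigr).
\end{equation*}

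The remaining task is to identify $Q_{n}:=2c_{n-1}g_{n-2}/P_{n-1}$ with $\frac{1}{\Zeta_{n-1}}\bigl(1-\frac1{\Zeta_{n-2}}\bigr)\frac{1}{\zeta_{n-2}-2}$. The one-step-lower version of the displayed identity says
\begin{equation*}
1-\frac1{\Zeta_{n-2}}=\frac{\Zeta_{n-2}-1}{\Zeta_{n-2}},\qquad \Zeta_{n-2}-1=Q_{n-2}\Bigl(1-\frac2{\zeta_{n-2}}\Bigr).
\end{equation*}
Using $P_{n-1}=\Zeta_{n-1}P_{n-2}c_{n-1}/c_{n-3}$ and $P_{n-3}=P_{n-2}c_{n-4}/(\Zeta_{n-2}c_{n-2})$, we write the ratio $Q_n/Q_{n-2}$ as
\begin{equation*}
\frac{Q_n}{Q_{n-2}}=\frac{c_{n-1}g_{n-2}P_{n-3}}{c_{n-3}g_{n-4}P_{n-1}}
\end{equation*}
and then reduce it to
\begin{equation*}
\frac{g_{n-2}c_{n-4}}{g_{n-4}c_{n-2}}\cdot\frac{1}{\Zeta_{n-1}\Zeta_{n-2}}=\frac{1}{\zeta_{n-2}\Zeta_{n-1}\Zeta_{n-2}}
\end{equation*}
by Lemma~\ref{lem:C_nG_nToZeta}. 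Hence
\begin{equation*}
Q_n=\frac{Q_{n-2}}{\zeta_{n-2}\Zeta_{n-1}\Zeta_{n-2}}=\frac{\Zeta_{n-2}-1}{(1-2/\zeta_{n-2})\zeta_{n-2}\Zeta_{n-1}\Zeta_{n-2}},
\end{equation*}
which is exactly $\frac1{\Zeta_{n-1}}(1-\frac1{\Zeta_{n-2}})\frac1{\zeta_{n-2}-2}$. This requires the identity at levels $n$ and $n-2$, so applying \eqref{eq:P_nRecursionInC_n} at level $n-2\geq3$ gives the condition $n\geq5$. It also requires $\zeta_{n-2}\neq2$ generically, which is fine as a rational identity.

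The main obstacle is the exponent bookkeeping in the first step: verifying that $P_nc_{n-2}/(P_{n-1}c_n)=\Zeta_n$ exactly, including the low-index conventions $\gamma_0=\gamma_{-1}=\gamma_{-2}=1$ and $\sigma$, $S$ at small arguments. I would handle this by a direct comparison of exponents of each $\gamma_k$ using $\sigma_{m-2}=S_m+S_{m-1}$. The initial data $\Zeta_3,\Zeta_4$ and $\psi_1,\psi_2$ are computed from $P_0,\dots,P_4$ (via \eqref{eq:P_nRecursionInC_n}) and the known $\gamma_k$; this is a routine computation. Equality at poles then follows because both sides are the same rational function.
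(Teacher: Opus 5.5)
Your proposal is correct and follows essentially the paper's proof. Both arguments rewrite \eqref{eq:P_nRecursionInC_n} using Lemma~\ref{lem:C_nG_nToZeta}, identify $P_nc_{n-2}/(P_{n-1}c_n)$ with $\Zeta_n$, and then divide the level-$n$ identity by the level-$(n-2)$ identity, applying Lemma~\ref{lem:C_nG_nToZeta} again; your $Q_n$ is just the paper's right-hand side divided by $\prod_{k=0}^{n-1}\psi_k$. The exponent bookkeeping you flag as the main obstacle is already done by Proposition~\ref{prop:Pnusingcn}, which gives $P_nc_{n-2}/(P_{n-1}c_n)=\psi_n\gamma_{n-2}/\gamma_n$ directly for $n\geq3$ (note that your displayed formula for $P_n/P_{n-1}$ omits the factor $\gamma_{n-2}^{\sigma_0}=\gamma_{n-2}$).
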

\begin{proof}
Moving both the $P_{n-1}$ terms of~\eqref{eq:P_nRecursionInC_n} to the left side of the expression and rewriting the last term on the right using Lemma~\ref{lem:C_nG_nToZeta} gives, for $n\geq3$, that
\begin{equation*}
P_n- P_{n-1}\frac{c_n}{c_{n-2}} = \frac{2c_nc_{n-1}g_{n-2}}{c_{n-2}} \Bigl(1-\frac2{\zeta_n}\Bigr).
\end{equation*}
Substituting for the $P_n$ and $P_{n-1}$ terms using the expression in Proposition~\ref{prop:Pnusingcn} gives
\begin{equation*}
	\biggl(\prod_{k=0}^n \psi_k\biggr) \frac{c_nc_{n-1}}{\gamma_n\gamma_{n-1}} -  \biggl(\prod_{k=0}^{n-1} \psi_k\biggr) \frac{c_nc_{n-1}}{\gamma_{n-1}\gamma_{n-2}} 
	= \frac{2c_nc_{n-1}g_{n-2}}{c_{n-2}}  \Bigl(1-\frac2{\zeta_n}\Bigr).
	\end{equation*}
which, after cancellation of $c_nc_{n-1}$ and multiplying by $\gamma_{n-1}\gamma_{n-2}$ is
\begin{equation*}
	\biggl(\prod_{k=0}^{n-1} \psi_k\biggr)(\Zeta_n-1)
	=\biggl(\prod_{k=0}^{n-1} \psi_k\biggr) \biggl(\frac{\psi_n\gamma_{n-2}}{\gamma_n} -1 \biggr)
	= \frac{2g_{n-2}\gamma_{n-1}\gamma_{n-2}}{c_{n-2}}  \Bigl(1-\frac2{\zeta_n}\Bigr). 
	\end{equation*}
Dividing this expression by the same expression with $n$ replaced by $n-2$, which is possible since we have assumed $n\geq 5$, we obtain
\begin{equation*}
	\psi_{n-1}\psi_{n-2} \biggl(\frac{\Zeta_n-1}{\Zeta_{n-2}-1} \biggr)
	= \frac{g_{n-2} c_{n-4}\gamma_{n-1}\gamma_{n-2} }{g_{n-4}c_{n-2}\gamma_{n-3}\gamma_{n-4}} \biggl(\frac{\zeta_n-2}{\zeta_{n-2}-2}\biggr)\frac{\zeta_{n-2}}{\zeta_n}
	\end{equation*}
whereupon moving the $\gamma$ factors to the left side and combining them with the $\psi$ factors yields the product $\Zeta_{n-1}\Zeta_{n-2}$, while Lemma~\ref{lem:C_nG_nToZeta} gives that the remaining term on the right is $\zeta_{n-2}^{-1}$ so we have
\begin{equation}\label{eqn:mainrecursion1}
	\Zeta_{n-1}\Zeta_{n-2} \biggl(\frac{\Zeta_n-1}{\Zeta_{n-2}-1} \biggr)
	=\frac{1}{\zeta_n}  \biggl(\frac{\zeta_n-2}{\zeta_{n-2}-2}\biggr).\qedhere
\end{equation}
\end{proof}

\begin{figure}
\begin{centering}
\includegraphics{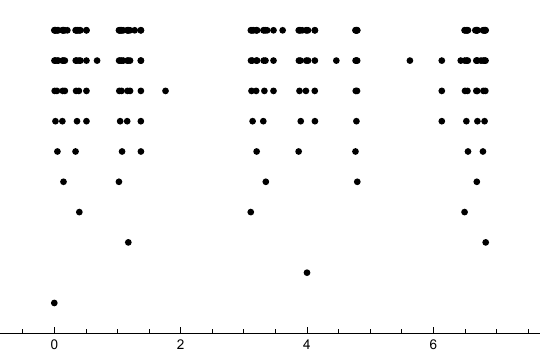}
\caption{The $0$-series eigenvalues of $\Gamma_{10}$, arranged so that the roots of $\psi_n$ occur on the horizontal line at height $n$ for $n=0,\dotsc,10$.}\label{fig:zeroseries}
\end{centering}
\end{figure}

\begin{corollary}\label{cor:0seriesfromzeta=2}
Let $\tilde\lambda$ be a $0$-series eigenvalue born at level $n$. Then there is $m$ so that $\zeta_n(\tilde\lambda)=2$ for all $k\geq m$.  If $n\geq 3$ then $m=n+2$. If $n=0$ or $n=1$ then $m=n+3$. If $n=2$ then $m=2$.  Moreover, in all cases except $n=2$ we also have $\zeta_{m-1}(\tilde\lambda)=-2$.  Hence the $0$-series eigenvalues can be obtained as backward orbits of $2$ under the dynamical system from Theorem~\ref{thm:brzrecursion}.
\end{corollary}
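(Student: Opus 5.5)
The plan is to read off values of $\zeta_k$ at $\tilde\lambda$ from the unreduced identity in the proof of Theorem~\ref{thm:mainrecursion}, and then to propagate them forward using the recursion~\eqref{eqn:brzrecursion} of Theorem~\ref{thm:brzrecursion}. I read the statement as asserting $\zeta_k(\tilde\lambda)=2$ for all $k\geq m$. The identity is
\begin{equation*}
\biggl(\prod_{k=0}^{N-1}\psi_k\biggr)(\Zeta_N-1)=\frac{2g_{N-2}\gamma_{N-1}\gamma_{N-2}}{c_{N-2}}\Bigl(1-\frac2{\zeta_N}\Bigr),
\end{equation*}
which holds for $N\geq3$.

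First I would show that $\tilde\lambda$ is not a root of any $\gamma_k$. Suppose $\tilde\lambda\in\Dirspec(\Delta(G_k))$ with birth level $k$. Then Lemma~\ref{lem:2seriesareDirevals} makes it a $2$-series eigenvalue born at level $k+2$. Every eigenvalue has a unique birth level, and at that level it is either $0$-series or $2$-series by Definition~\ref{def:02Series} and Lemma~\ref{lem:NoSingle0}, so this is impossible. Consequently every $c_k$, $g_k$ and $\eta_k$ is non-zero at $\tilde\lambda$, since each is a product of powers of $\gamma_j$'s (Theorem~\ref{thm:Brzfmlaforcn}, \eqref{eqn:DefOfgn}, Lemma~\ref{lem:C_nG_nToZeta}). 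Hence each $\zeta_k(\tilde\lambda)$ is finite and non-zero, and the right-hand prefactor $2g_{N-2}\gamma_{N-1}\gamma_{N-2}/c_{N-2}$ does not vanish at $\tilde\lambda$.

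Next I would evaluate the identity at $\tilde\lambda$ for $N\geq n+1$. The product $\prod_{k\le N-1}\psi_k$ contains $\psi_n$, so the left side vanishes provided $\Zeta_N$ has no pole at $\tilde\lambda$. The denominator of $\Zeta_N$ is $\gamma_N$, which is non-zero there, so this holds. The identity then forces $\zeta_N(\tilde\lambda)=2$.

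Here I expect the main obstacle. This naive count seems to give $m=n+1$, whereas the statement claims $m=n+2$ and $\zeta_{n+1}(\tilde\lambda)=-2$. So the step $N=n+1$ must be examined closely. The first thing I would check is whether $\psi_n$ shares factors with the $\gamma$'s or $\psi$'s defining $\Zeta_{n+1}$, which would leave a $0\cdot\infty$ ambiguity. The recursion of Theorem~\ref{thm:mainrecursion} is suggestive here: $\Zeta_n(\tilde\lambda)=0$ gives a pole in $1/\Zeta_{n}$, which appears as $1/\Zeta_{N-1}$ on the right at $N=n+1$. I would also use Lemma~\ref{lem:0seriessimple}, which says the eigenfunction is symmetric or antisymmetric, together with Remark~\ref{rem:EigenvaluesPersist}. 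The aim is to decide whether $\tilde\lambda$ is a double root of the relevant numerator at level $n+1$, since that is the mechanism that would shift the value of $\zeta_{n+1}(\tilde\lambda)$ to $-2$. If the identity alone is inconclusive at $N=n+1$, I would instead derive $\zeta_{n+1}(\tilde\lambda)^2=4$ from the recursion~\eqref{eqn:brzrecursion} at level $n+3$, and fix the sign by direct comparison with Proposition~\ref{prop:CharPolyfromBGJRT}.

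Once two consecutive values are known, namely $\zeta_{m-1}(\tilde\lambda)=\pm2$ and $\zeta_m(\tilde\lambda)=2$, the rest is an induction with~\eqref{eqn:brzrecursion}. If $\zeta_{k-2}(\tilde\lambda)^2=4$ and $\zeta_{k-1}(\tilde\lambda)$ is finite and non-zero, then $\zeta_k(\tilde\lambda)=2$. The finiteness and non-vanishing were established in the first step, so $\zeta_k(\tilde\lambda)=2$ for all $k\geq m$.

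The cases $n=0,1,2$ lie below the range $N\geq3$ of the identity, so I would treat them by direct computation. The $0$-series roots come from $\psi_0=\lambda$, $\psi_1=\lambda(\lambda-4)$ and $\psi_2=\lambda^2-8\lambda+8$. I would substitute these into $\zeta_2,\zeta_3$ from Theorem~\ref{thm:brzrecursion} and into $\zeta_4$ obtained via~\eqref{eqn:brzrecursion}, confirm the claimed values of $m$ and the value $-2$ at $m-1$, and then apply the same induction. The concluding sentence of the corollary, that the $0$-series eigenvalues are backward orbits of $2$, follows from $\zeta_m(\tilde\lambda)=2$ together with the fact that~\eqref{eqn:brzrecursion} expresses $\zeta_m$ as a function of the earlier $\zeta$'s.
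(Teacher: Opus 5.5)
\textbf{Your computation is right, and the statement you are trying to prove is false as written.} Your evaluation of the undivided identity at $N=n+1$ is correct, and the discrepancy you flag is real. There is no hidden $0\cdot\infty$: $\Zeta_{n+1}$ has denominator $\gamma_{n+1}$, and the prefactor $2g_{n-1}\gamma_n\gamma_{n-1}/c_{n-1}$ is a finite, non-zero product of powers of $\gamma_j$'s at $\tilde\lambda$. So $\zeta_N(\tilde\lambda)=2$ for every $N\geq\max(n+1,3)$.

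The gap in your proposal is the plan to reconcile this with the statement. That cannot succeed, because for $n\geq3$ the statement is wrong. For $n\geq3$ the identity at $N=n$ has left side $-\prod_{k<n}\psi_k(\tilde\lambda)\neq0$, so $\zeta_n(\tilde\lambda)\neq2$. Then \eqref{eqn:brzrecursion} at index $n+2$ reads $0=2\bigl(\zeta_n(\tilde\lambda)^2-4\bigr)$. Hence $\zeta_n(\tilde\lambda)=-2$ and $\zeta_k(\tilde\lambda)=2$ for all $k\geq n+1$. The minimal $m$ is therefore $n+1$, and $\zeta_{n+1}(\tilde\lambda)=2$, not $-2$.

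You can confirm this from the paper's own data:
\begin{itemize}
\item $\gamma_3+2\gamma_2=\gamma_1\psi_3$ with $\psi_3=\lambda^3-10\lambda^2+24\lambda-8$.
\item The denominator of $\Zeta_4$ in Theorem~\ref{thm:mainrecursion} is $\gamma_4=2\gamma_1\gamma_3+\psi_3P_2$.
\item So at the roots of $\psi_3$ one has $\zeta_3=-2$ and $\zeta_4=\gamma_4/(\gamma_1\gamma_3)=2$.
\item Likewise $\gamma_4+2\gamma_1\gamma_3=\gamma_2\psi_4$ with the paper's $\psi_4$, giving $\zeta_4=-2$ at the roots of $\psi_4$.
\end{itemize}

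The case $n=1$ is also misstated. Here $\zeta_3(4)=\gamma_3(4)/\gamma_2(4)=(-8)/(-4)=2$, so $m=3$ with $\zeta_2(4)=-2$. Only $n=0$ and $n=2$ are as claimed. Note also that $\psi_1=\lambda-4$, since $P_1=\psi_0\psi_1$.

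In particular, your fallback of obtaining $\zeta_{n+1}(\tilde\lambda)^2=4$ and then fixing the sign to $-2$ must fail, because the sign is $+$. What survives is the weaker assertion that $\zeta_k(\tilde\lambda)=2$ for all $k\geq m$ with the stated $m$, together with the backward-orbit conclusion. Your argument does prove these.

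\textbf{Comparison with the paper's proof.} The paper uses the divided recursion \eqref{eqn:mainrecursion1} at index $n+2$ to get $\zeta_{n+2}(\tilde\lambda)=2$. It then reads off $\zeta_{n+1}(\tilde\lambda)=-2$ from $\zeta_{n+2}-2=(1+2/\zeta_{n+1})(\zeta_n^2-4)$. That step overlooks the factor $\zeta_n+2$, which is the one that actually vanishes. The paper's minimality argument then discards the case $\Zeta_{m-1}(\tilde\lambda)=0$, i.e.\ $m=n+1$, on the strength of the same deduction.

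Your undivided identity is the cleaner route. It gives $\zeta_N(\tilde\lambda)=2$ for all $N\geq\max(n+1,3)$ in one stroke, and through the case $N=n$ it gives the corrected value $-2$ at index $n$. One small correction: the restriction $N\geq3$ is on $N$, not on $n$. So for $n=0,1,2$ the identity already handles all $N\geq3$, and only $\zeta_1$ and $\zeta_2$ need direct computation.
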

\begin{proof}
Suppose $\psi_n(\tilde\lambda)=0$. Observe that then $\psi_m(\tilde\lambda)\neq0$ for $m\neq n$ and, since $\tilde\lambda$ is a $0$-series eigenvalue and these are distinct from the $2$-series eigenvalues, we have $\gamma_m(\tilde\lambda)\neq0$. It follows from the definition of $\zeta_m$ that $\zeta_m(\tilde\lambda)\neq0$ and $\zeta_m(\tilde\lambda)\neq\infty$ for all $m$ and from the definition of $\Zeta_m$ that $\Zeta_m(\tilde\lambda)\neq0$ if $m\neq n$ while $\Zeta_n(\tilde\lambda)=0$.

Suppose $n\geq3$ so we can use~\eqref{eqn:mainrecursion1} for $n+2$ and the preceding to obtain
\begin{equation*}
	0=\Zeta_{n+1}\Zeta_{n} \biggl(\frac{\Zeta_{n+2}-1}{\Zeta_{n}-1} \biggr)
	=\frac{1}{\zeta_{n+2}}  \biggl(\frac{\zeta_{n+2}-2}{\zeta_{n}-2}\biggr).
	\end{equation*}
Since the $0$-series eigenvalues are simple by Lemma~\ref{lem:0seriessimple}, the zero in this expression is simple.  We deduce that $\zeta_{n+2}(\tilde\lambda)=2$ and, if $\zeta_n(\tilde\lambda)=2$ then the degree of the zero of $\zeta_{n+2}-2$ is one greater than the degree of the zero of $\zeta_n-2$ at $\tilde\lambda$.  Since we also have the recursion~\eqref{eqn:brzrecursion} we can write
\begin{equation*}
	\zeta_{n+2}-2 = \Bigl( 1+\frac2{\zeta_{n+1}}\Bigr) (\zeta_n^2-4)
	\end{equation*}
and deduce that $\zeta_{n+1}(\tilde\lambda)=-2$. This recursion then also gives $\zeta_{n+k}(\tilde\lambda)=2$ for all $k\geq3$.

If $m$ is the smallest integer so $\zeta_m=2$ then~\eqref{eqn:mainrecursion1} for $n=m$ says
\begin{equation*}
	\Zeta_{m-1}\Zeta_{m-2} \biggl(\frac{\Zeta_{m}-1}{\Zeta_{m-2}-1} \biggr)=0	
	\end{equation*}
There are three possibilities, the first being that  $\Zeta_{m-2}=0$, so $\psi_{m-2}(\tilde\lambda)=0$ and $m-2=n$, in which case the first time $\zeta_m(\tilde\lambda)=2$ is $m=n+2$ and since $\zeta_{n+1}(\tilde\lambda)=-2$ we have our desired conclusion. 
The second possibility is that $\Zeta_{m-1}=0$, but then reasoning as above we have $\zeta_{m+1}=2$ and $\zeta_{m}=-2$, in contradiction to our choice of $m$.  The final possibility is that $\Zeta_{m}=1$. In this case we use our choice of $m$ and the recursion to write
\begin{equation*}
	\Zeta_{m-k-1}\Zeta_{m-k-2} \biggl(\frac{\Zeta_{m-k}-1}{\Zeta_{m-k-2}-1} \biggr) \neq 0	
	\end{equation*}
for $k=2,4,\dotsc$.  Evidently this gives $\Zeta_{m-2j}=1$ for those $j$ for which the recursion is valid, which is until the first time $m-2j<5$. Accordingly either $\Zeta_3=1$ or $\Zeta_4=1$. Using the expressions for these in Theorem~\ref{thm:mainrecursion} and the initial data in Theorem~\ref{thm:brzrecursion} the former translates to $\gamma_2=0$, and the latter to $\gamma_1\gamma_3=0$, neither of which can be true at $\tilde\lambda$ since the roots of the latter are $2$-series eigenvalues.

Having established the result for $n\geq3$ we consider the cases $n=0,1,2$ and compute the values of $\zeta_m(\tilde\lambda)$ using  the formulas in Theorem~\ref{thm:brzrecursion} and $\zeta_1(\lambda)=\gamma_1(\lambda)=\lambda-2$.

When $\psi_0(\tilde\lambda)=0$ we have $\tilde\lambda=0$, $\zeta_1(\tilde\lambda)=-2$,  $\zeta_2(\tilde\lambda)=-2$, and $\zeta_3(\tilde\lambda)=2$, so applying the recursion gives $\zeta_m(\tilde\lambda)=2$ for all $m\geq 3$.

When  $\psi_1(\tilde\lambda)=0$ we have $\tilde\lambda=4$ so $\zeta_1(\tilde\lambda)=2$. We also compute that both $\zeta_2(\tilde\lambda)=-2$ and $\zeta_3(\tilde\lambda)=-2$, so $\zeta_m(\tilde\lambda)=2$ for all $m\geq 4$.

Finally, $\psi_2(\tilde\lambda)=\tilde\lambda^2-8\tilde\lambda+8=0$, so $\tilde\lambda=4\pm2\sqrt2$ and $\zeta_1(\tilde\lambda)$ is irrational. However, this may be rewritten as $\gamma_2(\tilde\lambda)=2\gamma_1(\tilde\lambda)$, or $\zeta_2(\tilde\lambda)=2$. We can also compute $\zeta_3(\tilde\lambda)=2$ (this is most easily done by reducing the numerator and denominator modulo $\tilde\lambda^2-8\tilde\lambda+8$).  It follows that $\zeta_m(\tilde\lambda)=2$ for all $m\geq2$.
\end{proof}

Figure~\ref{fig:zeroseries} shows all $0$-series eigenvalues of $\Gamma_{10}$ with each horizontal row corresponding to the level at which the eigenvalue was born, so that the $n^\text{th}$ row contains the roots of $\psi_n$, $n=0,1,\dotsc,10$.  The number of $0$-series eigenvalues was determined in Corollary~\ref{cor:deg(psi_n)}.  We compare the locations of the $0$-series and $2$-series eigenvalues in Figure~\ref{fig:bothseries}, which contains both series by level in the same manner.

\begin{figure}
\begin{centering}
\includegraphics{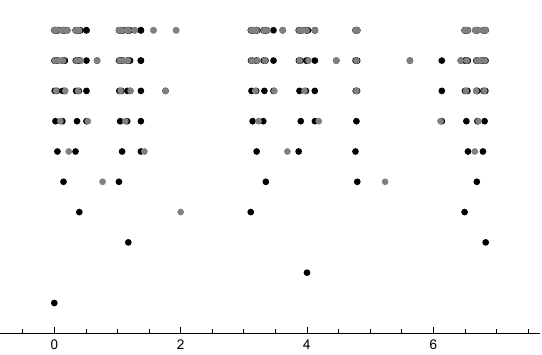}
\caption{The eigenvalues of $\Gamma_{10}$ with $2$-series in grey and $0$-series in black, arranged so that the horizontal line at height $n$ shows the eigenvalues born at level $n$ for $n=0,1,\dotsc,10$. There is considerable overlap of points on the lines for larger $n$.}\label{fig:bothseries}
\end{centering}
\end{figure}

\section{Spectral Measures}\label{sec:SpectralMeasure}
\begin{definition}\label{def:SpectralMeasure}
    The spectral counting measure of an operator $O$ is the sum of weighted point masses at each element of the spectrum of $O$:
 \begin{equation*}
 	\nu_n=\frac1{|\spec(O)|} \sum_{ \spec (O) } \delta_\lambda
 	\end{equation*}
\end{definition}

It was observed by Grigorchuk and \.Zuk (Theorem 4.2 and Corollary 4.3 in~\cite{GZ04}) that the spectral counting measures of the Markov operators corresponding to a convergent sequence of finite regular graphs, each of
which covers the next, converge weakly to a limit measure $\mu$ that they call the Kesten-von Neumann-Serre (KNS) measure.  See~\cite{GZ04} for a discussion of the connections between this measure and
the spectral measures studied by Kesten, the trace in the von Neumann
algebra generated by the left regular representation, and Serre's notion
of equidistribution with respect to $\mu$.  The $\Gamma_n$ are $4$-regular and $\Gamma_{n+1}$ covers $\Gamma_n$ for each $n$, and moreover these graphs converge in the sense of Section 3 in \cite{GZ04}, so the weak
limit of the spectral counting measures for the Laplacians $\Delta(\Gamma_n)$ is the KNS measure in this setting. 


\begin{definition}\label{def:KNSSpectralMeasure}
    The KNS spectral measure is the weak limit of the sequence of spectral counting measures.
    \begin{equation*}
    \mu=\underset{n\to\infty}{\textup{w-lim}}\, \nu_n.
    \end{equation*}
\end{definition}

\begin{remark}
It is a  standard result that the eigenvalues of the Laplacian on a $d$-regular graph with unit weight edges are in $[0,2d]$ (see~Section~1.3 of~\cite{ChungSpecGrphTh}), so the eigenvalues of $\Delta(\Gamma_n)$ lie in $[0,8]$ and, in particular, weak and vague convergence of probability measures coincide for our problem.  We use this fact without further comment.
\end{remark}

\begin{lemma}\label{lem:SpectralMeasureofGamma}
The spectral counting measure of $\Delta(\Gamma_n)$ is 
\begin{equation*}
	\nu_n
	=\frac1{2^n} \sum_{k=0}^n \biggl( \sum_{\{\lambda:\gamma_{k-2}(\lambda)=0\}} \sigma_{n-k} \delta_\lambda+\sum_{\{\lambda:\psi_k(\lambda)=0\}} \delta_\lambda \biggr)
	\end{equation*}
\end{lemma}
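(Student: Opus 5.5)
The plan is to read the formula straight off the factorization of the characteristic polynomial in Theorem~\ref{thm:FullFactorization}, together with the vertex count in Lemma~\ref{lem:SizeofGamma}. By Definition~\ref{def:SpectralMeasure}, interpreted with multiplicity, $\nu_n$ is $|\spec(\Delta(\Gamma_n))|^{-1}$ times the sum of point masses at the eigenvalues, each counted according to its multiplicity. Since $\Delta(\Gamma_n)$ is a symmetric matrix indexed by the vertices of $\Gamma_n$, the number of eigenvalues counted with multiplicity is the number of vertices, which is $2^n$ by Lemma~\ref{lem:SizeofGamma}. This accounts for the prefactor $2^{-n}$.

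Next, invoke Theorem~\ref{thm:FullFactorization}, which gives
\begin{equation*}
P_n=\prod_{k=0}^n\psi_k\,\gamma_{k-2}^{\sigma_{n-k}}.
\end{equation*}
The multiplicity of an eigenvalue is its order as a root of $P_n$, so to read off multiplicities we need three facts.
\begin{enumerate}[(i)]
\item Each $\psi_k$ has simple roots. This holds because $0$-series eigenvalues are simple, by Lemma~\ref{lem:0seriessimple}.
\item Each $\gamma_{k-2}$ has simple roots. This follows from Theorem~\ref{thm:Gnefntheorem}~\eqref{item:Gnefnsimple}, equivalently Corollary~\ref{lem:2SeriesSimpleBirth}.
\item The root sets of the various factors are pairwise disjoint.
\end{enumerate}

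Point (iii) is the only step needing care. The argument runs as follows.
\begin{itemize}
\item The roots of $\psi_k$ are exactly the $0$-series eigenvalues born at level $k$.
\item The roots of $\gamma_{k-2}$ are exactly the $2$-series eigenvalues born at level $k$, by Lemma~\ref{lem:2seriesareDirevals}.
\item Each eigenvalue has a unique level of birth, so factors with different indices $k$ have disjoint roots.
\item At a fixed level, the $0$-series and $2$-series are disjoint by Definition~\ref{def:02Series} combined with Lemma~\ref{lem:NoSingle0}.
\end{itemize}
Consequently, a root $\lambda$ of $\gamma_{k-2}$ has multiplicity exactly $\sigma_{n-k}$, consistent with Lemma~\ref{lem:2SeriesMultiplicity}, and a root of $\psi_k$ has multiplicity $1$.

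Summing the point masses with these multiplicities over $k=0,\dots,n$ and dividing by $2^n$ gives the stated expression. The conventions $\gamma_{-2}=\gamma_{-1}=\gamma_0=1$ make the inner $\gamma$-sums empty for $k\le 2$, which matches the absence of $2$-series eigenvalues at those levels. Since the disjointness bookkeeping is already contained in the proof of Theorem~\ref{thm:FullFactorization}, there is no real obstacle, and the proof should be a short direct consequence of that theorem.
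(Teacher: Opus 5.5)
Your proposal is correct and is essentially the paper's argument. The normalization $2^{-n}$ comes from Lemma~\ref{lem:SizeofGamma}; $0$-series roots carry weight one by Lemma~\ref{lem:0seriessimple}; and each root of $\gamma_{k-2}$ carries weight $\sigma_{n-k}$ by Lemma~\ref{lem:2SeriesMultiplicity} and Corollary~\ref{lem:2SeriesSimpleBirth}. The only difference is that you pass through Theorem~\ref{thm:FullFactorization} and spell out why the root sets are disjoint, whereas the paper cites the multiplicity lemmas directly (that theorem itself rests on them); note that at a fixed level, disjointness of the two series already follows from Definition~\ref{def:02Series}, and Lemma~\ref{lem:NoSingle0} is what supplies exhaustiveness.
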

\begin{proof}
From  Lemma~\ref{lem:SizeofGamma} the number of vertices in $\Gamma_n$ is $2^n$ and thus $|\spec(\Delta(\Gamma_n))|=2^n$. Since $0$-series eigenvalues are simple (Lemma~\ref{lem:0seriessimple}) they occur with weight $2^{-n}$ in the sum from Definition~\ref{def:SpectralMeasure} for $\Delta(\Gamma_n)$. The multiplicity of each $2$-series eigenvalue born at level $k\leq n$ and occurring in $\Delta(\Gamma_n)$ is $\sigma_{n-k}$ (from Lemma~\ref{lem:2SeriesMultiplicity}) and was simple at level $k$ (by Corollary~\ref{lem:2SeriesSimpleBirth}) so occurs once as a root for $\gamma_{k-2}$. The formula follows.
\end{proof}

From the multiplicities and the degree of $\gamma_k$ we can estimate the mass in $\nu_n$ at the eigenvalues that occur as roots of $\gamma_k$. These roots are eigenvalues associated to the $2$-series,
\begin{lemma}\label{lem:2SeriesSpectralMeasureContribution}
\begin{equation*}
\Bigl| \sigma_{n-k}2^{-n}-\frac{2}{3} 2^{-k}\Bigr|\leq\frac23 2^{-n}.
\end{equation*}
\end{lemma}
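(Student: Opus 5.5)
This is a direct computation from the closed form for $\sigma_k$ in Lemma~\ref{lem:2SeriesMultiplicity}. Substituting $k'=n-k$ into
\begin{equation*}
\sigma_{k'}=\frac16\bigl(2^{k'+2}+3+(-1)^{k'+1}\bigr)
\end{equation*}
gives
\begin{equation*}
\sigma_{n-k}2^{-n}=\frac16\bigl(4\cdot 2^{-k}+(3+(-1)^{n-k+1})2^{-n}\bigr)=\frac23 2^{-k}+\frac{3+(-1)^{n-k+1}}{6}\,2^{-n}.
\end{equation*}
The leading term $\frac23 2^{-k}$ is exactly the quantity being compared against. So the whole difference is the remainder term, and the task reduces to bounding that remainder uniformly in $k$.

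The key step is to note that $3+(-1)^{n-k+1}$ takes only two values. It equals $4$ when $n-k$ is even and $2$ when $n-k$ is odd. Hence the remainder is positive and equals either $\frac23 2^{-n}$ or $\frac13 2^{-n}$. In both cases it is at most $\frac23 2^{-n}$, which is the claimed inequality, and equality holds exactly when $n-k$ is even.

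There is no real obstacle; the only care needed is the range of indices. The formula for $\sigma_{n-k}$ is used with $0\le k\le n$, so $n-k\ge 0$, and the closed form was verified in Lemma~\ref{lem:2SeriesMultiplicity} for all $k'\ge0$, including the base value $\sigma_0=1$. No assumption on $k\ge3$ is needed for the inequality itself, even though $\gamma_{k-2}$ is trivial for $k\le2$.
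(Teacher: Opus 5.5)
Your argument is correct and is exactly the paper's proof: substitute the closed form for $\sigma_{n-k}$, split off $\frac23 2^{-k}$, and bound the remainder $\frac16\bigl(3+(-1)^{n-k+1}\bigr)2^{-n}$ by $\frac23 2^{-n}$. One harmless slip: $3+(-1)^{n-k+1}$ equals $2$ when $n-k$ is even and $4$ when $n-k$ is odd (e.g.\ $n=k$ gives $\sigma_0 2^{-n}-\frac23 2^{-n}=\frac13 2^{-n}$), so equality holds for odd $n-k$, not even, but this does not affect the bound.
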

\begin{proof}
Using Lemma~\ref{lem:2SeriesMultiplicity} we have
    \begin{align*}
        \left| \sigma_{n-k}2^{-n}-\frac{2}{3}2^{-k}\right|
        =&\left|\frac16 (2^{n-k+2}+3+(-1)^{n-k+1}) 2^{-n}-\frac{2}{3}2^{-k}\right|\\
        =&\frac16\left| 3+(-1)^{n-k+1}\right|2^{-n}
        \leq \frac{2}{3}2^{-n}
    \end{align*}
\end{proof}

This tells us that for fixed $k$ and large $n\gg k$, the spectral counting measure $\nu_n$ has atoms of approximately weight $\frac{2}{3}\cdot 2^{-k}$ at each eigenvalue in the $2$-series of  $\Delta(\Gamma_k)$.  We can also use the degree of $\psi_k$ to estimate the total weight of $0$-series eigenvalues.  Recall that the latter was computed in terms of the roots $\rho_j$ of a certain polynomial in Corollary~\ref{cor:deg(psi_n)}.
\begin{lemma}\label{lem:0SeriesSpectralMeasureContribution}
The spectral mass from the $0$-series eigenvalues in $\Delta(\Gamma_n)$ is 
\begin{equation*}
	2^{-n}  \bigl(\deg(\gamma_n)+\deg(\gamma_{n-1})-1\bigr)
	=  \frac{2^{1-n}}{\sqrt{7}} \sum_{j=1,2,3} (\rho_j^n+\rho_j^{n-1})\cos\bigl(\phi+\frac{2\pi j}3\bigr)
	\leq 20 (0.95)^n
	\end{equation*}
so the spectral mass of the $0$-series eigenvalues in $\Delta(\Gamma_n)$ converges exponentially to zero as $n\to\infty$.
\end{lemma}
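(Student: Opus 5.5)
The plan is to count the $0$-series eigenvalues directly and then substitute known degree formulas. By Lemma~\ref{lem:SpectralMeasureofGamma}, and since $0$-series eigenvalues are simple (Lemma~\ref{lem:0seriessimple}), the $0$-series mass in $\nu_n$ is $2^{-n}\sum_{k=0}^n\deg(\psi_k)$. We evaluate this sum using Corollary~\ref{cor:deg(psi_n)}.

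\textbf{Step 1: the telescoping sum.} For $k\ge2$ we have $\deg(\psi_k)=\deg(\gamma_k)-\deg(\gamma_{k-2})$, so
\begin{equation*}
\sum_{k=2}^n\deg(\psi_k)=\deg(\gamma_n)+\deg(\gamma_{n-1})-\deg(\gamma_1)-\deg(\gamma_0).
\end{equation*}
With $\deg\gamma_1=1$ and $\deg\gamma_0=0$, and adding $\deg\psi_0+\deg\psi_1=3$, the total is $\deg(\gamma_n)+\deg(\gamma_{n-1})+2$. This does not match the stated $-1$ exactly. I would first recheck the conventions and small cases: the values of $\psi_0,\psi_1,\gamma_1$, and whether the $k=2$ step of the corollary should be read with $\gamma_0=1$. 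A cleaner alternative is to verify the count against total degrees: $2^n=\deg P_n$, minus the $2$-series count $\sum_k\sigma_{n-k}\deg\gamma_{k-2}$, using Theorem~\ref{thm:FullFactorization} and Proposition~\ref{prop:Pnusingcn}. Whatever constant offset results, it is irrelevant to the exponential bound.

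\textbf{Step 2: the closed form.} Substitute the formula for $\deg(\gamma_n)$ from Proposition~3.17 of \cite{brzoska}. This is the same expression used in Corollary~\ref{cor:deg(psi_n)}, namely $\frac2{\sqrt7}\sum_j\rho_j^n\cos(\phi+\frac{2\pi j}3)$ up to a constant. Summing the two consecutive terms then gives the displayed trigonometric sum, with the constants absorbed as in that proposition.

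\textbf{Step 3: the exponential bound.} Locate the roots of $\rho^3-\rho^2-2\rho+1$ numerically: $\rho_3\approx1.802$, and the other two, $\approx0.445$ and $\approx-1.247$, have modulus less than $\rho_3$. Then bound each term by $2^{1-n}\frac{1}{\sqrt7}|\rho_j|^{n-1}(|\rho_j|+1)$, which gives a total of at most $C(\rho_3/2)^n$ with $\rho_3/2\approx0.901<0.95$. The constant is $C\approx\frac{2}{\sqrt7}\cdot\frac{1+\rho_3}{\rho_3}\cdot3$, comfortably below $20$.

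The main obstacle is Step 1: reconciling the exact additive constant and the low-level degree conventions with the stated closed form. Steps 2 and 3 are routine.
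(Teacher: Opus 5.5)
Your route is the paper's route: write the mass as $2^{-n}\sum_{k=0}^n\deg\psi_k$, telescope with $\deg\psi_k=\deg\gamma_k-\deg\gamma_{k-2}$, substitute the closed form for $\deg\gamma_n$, and bound geometrically. Your Step~3 is a sharper version of the paper's crude $(1.9/2)^n$ estimate and is fine.

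Your suspicion about the constant is justified. The paper's own proof telescopes only $\sum_{k=2}^n$ and silently drops $\deg\psi_0+\deg\psi_1$. However, your $+2$ is also wrong, because $\deg\psi_1=1$, not $2$. The eigenvalue $0$ of $\Gamma_1$ was already born at level $0$. Formally, Theorem~\ref{thm:FullFactorization} at $n=1$ reads $P_1=\psi_0\psi_1=\lambda\psi_1$, so $\psi_1=\lambda-4$; this agrees with $P_2=\lambda(\lambda-4)(\lambda^2-8\lambda+8)$. The paper's $\psi_1=\lambda(\lambda-4)$ and $\deg\psi_1=2$ are misprints. Hence the $0$-series count is $1+1+\deg\gamma_n+\deg\gamma_{n-1}-1=\deg\gamma_n+\deg\gamma_{n-1}+1$. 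The cross-check against $\deg P_n=2^n$ that you suggest confirms this. For $n=5$ the $2$-series contributes $\sigma_2\deg\gamma_1+\sigma_1\deg\gamma_2+\sigma_0\deg\gamma_3=3+4+4=11$, leaving $21=13+7+1$ for the $0$-series.

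The genuine gap is Step~2: there is no constant to absorb. The closed form $\frac{2}{\sqrt7}\sum_j\rho_j^n\cos(\phi+\frac{2\pi j}3)$ equals $\deg\gamma_n$ exactly, with no additive term. It returns $0,1,2,4,7$ for $n=0,\dots,4$; at $n=0$ the three equally spaced cosines sum to zero. These match $\deg\gamma_0,\dots,\deg\gamma_4$, the last two visible as the denominators of $\Zeta_3$ and $\Zeta_4$. So any offset $c$ in $2^{-n}(\deg\gamma_n+\deg\gamma_{n-1}+c)$ survives as $c\,2^{-n}$. The displayed equality with the bare trigonometric sum is therefore false for every $c\neq0$, both for the stated $-1$ and for the correct $+1$. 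No argument can prove the equalities as written. What is true for $n\geq1$, and what your Steps~1 and~3 prove once the constant is fixed, is
\begin{equation*}
2^{-n}\bigl(\deg(\gamma_n)+\deg(\gamma_{n-1})+1\bigr)=\frac{2^{1-n}}{\sqrt7}\sum_{j=1,2,3}\bigl(\rho_j^n+\rho_j^{n-1}\bigr)\cos\Bigl(\phi+\frac{2\pi j}3\Bigr)+2^{-n}\leq 20(0.95)^n .
\end{equation*}
The inequality follows from your bound $C(\rho_3/2)^n$ with $C\approx3.5$, together with $2^{-n}\leq(0.95)^n$. Only the exponential decay is used later, in Theorems~\ref{thm:KNSSpectralMeasureofGamma} and~\ref{thm:SpectralMeasureIsAllSupportedOnHighLevel}, so nothing downstream breaks. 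Still, you should state and prove the corrected identity rather than assert that the constants cancel.
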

\begin{proof}
The $0$-series eigenvalues are simple (Lemma~\ref{lem:0seriessimple}) and are the roots of the $\psi_k$, from which the spectral mass of these is simply $2^{-n}\sum_0^n\deg(\psi_k)$. In Corollary~\ref{cor:deg(psi_n)} we saw $\deg(\psi_n)=\deg(\gamma_n)-\deg(\gamma_{n-2})$ for $n\geq2$, so the sum is telescoping and the spectral mass is
\begin{equation}\label{eqn:0SeriesSpectralMeasureContribution1}
	2^{-n} \bigl(\deg(\gamma_n)+\deg(\gamma_{n-1})-\deg(\gamma_1)-\deg(\gamma_0)\bigr)
	=2^{-n}  \bigl(\deg(\gamma_n)+\deg(\gamma_{n-1})-1\bigr)
	\end{equation}
where we have used $\gamma_0=1$ and $\gamma_1=c_1=(\lambda-2)$.  Applying the same substitution for the values of $\deg(\gamma_n)$ as in Corollary~\ref{cor:deg(psi_n)} we find this to be
\begin{equation*}
	 \frac{2^{1-n}}{\sqrt{7}} \sum_{j=1,2,3} (\rho_j^n+\rho_j^{n-1})\cos\bigl(\phi+\frac{2\pi j}3\bigr)
	\end{equation*}
From Proposition~3.17 in~\cite{brzoska} we have a crude estimate
\begin{equation}\label{eqn:0SeriesSpectralMeasureContribution2}
	\deg (\gamma_n )
	\leq\frac{6}{\sqrt{7}} \bigl(\max\{|\rho_1|,|\rho_2|,|\rho_3|\}\bigr)^n
	\leq\frac{6}{\sqrt{7}} (1.9)^n
	\end{equation}
and substitution into~\eqref{eqn:0SeriesSpectralMeasureContribution1} provides a bound of $18\bigl(\frac{1.9}{2}\bigr)^{n-1}$ which is smaller than the given bound.
\end{proof}

\begin{theorem}\label{thm:KNSSpectralMeasureofGamma}
The KNS spectral measure for the sequence $\Delta(\Gamma_n)$ is 
\begin{equation*}
	\mu= \frac23 \sum_{k=3}^\infty \ 2^{-k}\hspace{-1em} \sum_{\{\lambda:\gamma_{k-2}(\lambda)=0\}} \delta_\lambda.
	\end{equation*}
\end{theorem}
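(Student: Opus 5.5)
We will prove weak convergence $\nu_n\to\mu$ by testing against a continuous function $\phi$ on $[0,8]$. By the Remark, all eigenvalues lie in $[0,8]$, so weak and vague convergence coincide and it suffices to show $\int\phi\,d\nu_n\to\int\phi\,d\mu$ for every continuous $\phi$ with $\|\phi\|_\infty\le 1$.

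Start from Lemma~\ref{lem:SpectralMeasureofGamma} and split $\nu_n=\nu_n^{(2)}+\nu_n^{(0)}$ into the $2$-series and $0$-series parts. By Lemma~\ref{lem:0SeriesSpectralMeasureContribution} the total mass of $\nu_n^{(0)}$ is at most $20(0.95)^n$. Hence $|\int\phi\,d\nu_n^{(0)}|\to0$ and this part contributes nothing to the limit.

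For the $2$-series part, compare $\nu_n^{(2)}$ with the truncation
\begin{equation*}
\mu_n=\frac23\sum_{k=3}^n2^{-k}\sum_{\{\lambda:\gamma_{k-2}(\lambda)=0\}}\delta_\lambda .
\end{equation*}
The terms $k=0,1,2$ vanish because $\gamma_{-2}=\gamma_{-1}=\gamma_0=1$. The roots of each $\gamma_{k-2}$ are simple, and they are distinct across different $k$ because they are born at different levels. By Lemma~\ref{lem:2SeriesSpectralMeasureContribution}, each atom of $\nu_n^{(2)}$ differs from the corresponding atom of $\mu_n$ by at most $\frac23 2^{-n}$. Therefore
\begin{equation*}
\Bigl|\int\phi\,d\nu_n^{(2)}-\int\phi\,d\mu_n\Bigr|\le \frac23 2^{-n}\sum_{k=3}^n\deg\gamma_{k-2}.
\end{equation*}
The estimate \eqref{eqn:0SeriesSpectralMeasureContribution2} gives $\deg\gamma_j\le\frac6{\sqrt7}(1.9)^j$. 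The sum is then $O((1.9)^n)$, so this bound is $O((0.95)^n)\to0$.

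It remains to show that $\mu_n\to\mu$, that $\mu$ is a well-defined finite measure, and that $\mu$ is a probability measure. The tail $\mu-\mu_n$ has total mass $\frac23\sum_{k>n}2^{-k}\deg\gamma_{k-2}\le C\sum_{k>n}(1.9/2)^k\to0$. This shows that $\mu$ is finite and that $\mu_n\to\mu$ in total variation. Since each $\nu_n$ is a probability measure and the error terms above vanish, $\mu$ has mass one automatically. One could also check this directly from Corollary~\ref{cor:deg(psi_n)}, but it is not needed.

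The main obstacle is making sure that both exponential errors beat $2^{-n}$, namely the $0$-series mass and the accumulated atom-by-atom discrepancy. Both reduce to the growth bound $\deg\gamma_j\lesssim(1.9)^j<2^j$ coming from the dynamical degree $\rho_3<2$ of Proposition~3.17 in~\cite{brzoska}. Combining the three estimates gives $\int\phi\,d\nu_n\to\int\phi\,d\mu$, which identifies $\mu$ as the weak limit, i.e.\ the KNS measure of Definition~\ref{def:KNSSpectralMeasure}.
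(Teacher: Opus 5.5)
Your proposal is correct and follows essentially the same route as the paper. You discard the $0$-series mass via Lemma~\ref{lem:0SeriesSpectralMeasureContribution}, compare each $2$-series atom with $\frac23 2^{-k}$ via Lemma~\ref{lem:2SeriesSpectralMeasureContribution}, and control both the accumulated discrepancy and the tail using $\deg\gamma_j\le\frac6{\sqrt7}(1.9)^j$, exactly as the paper does; your explicit check that $\mu$ is a finite probability measure is a small addition the paper leaves implicit.
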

\begin{proof}
Suppose $f$ is continuous and bounded by $M$ on the closure of $\cup_n(\Delta(\Gamma_n))$.  Showing $\int f d\nu_n\to\int f d\mu$ is the same as showing that
\begin{equation*}
	\sum_{k=3}^n \sum_{\{\lambda:\gamma_{k-2}(\lambda)=0\}} 2^{-n}\sigma_{n-k}f(\lambda) 
	+\sum_{k=0}^n \sum_{\{\lambda:\psi_k(\lambda)=0\}} 2^{-n} f(\lambda)
	-  \frac23 \sum_{k=3}^\infty \ 2^{-k}\hspace{-1em} \sum_{\{\lambda:\gamma_{k-2}(\lambda)=0\}} f(\lambda)
	\to0
	\end{equation*}
as $n\to\infty$. Note that we removed the terms for $k=1,2$ of the sum from $\nu_n$ because $\{\lambda:\gamma_{k-2}(\lambda)=0\}$ is empty for these $k$.

The middle term in the above expression corresponds to the $0$-series eigenvalues, so it is apparent from $|f(\lambda)|\leq M$ and Lemma~\ref{lem:0SeriesSpectralMeasureContribution} that this term converges to zero as $n\to\infty$. 

Combining the difference of the other two terms, using $|f(\lambda)|\leq M$ and Lemma~\ref{lem:2SeriesSpectralMeasureContribution}  and then estimating the size of $\{\lambda:\gamma_{k-2}(\lambda)=0\}=\deg(\gamma_{k-2})$ as in~\eqref{eqn:0SeriesSpectralMeasureContribution2} gives
\begin{align*}
	\lefteqn{ \biggl| \sum_{k=0}^n \sum_{\{\lambda:\gamma_{k-2}(\lambda)=0\}} (2^{-n}\sigma_{n-k} - \frac23 2^{-k}\bigr)f(\lambda)  - \sum_{n+1}^\infty \frac23 2^{-k}\bigr)f(\lambda) \biggr| } \quad&\\
	&\leq \frac{2M}3 \sum_{k=0}^n 2^{-n} \deg(\gamma_{k-2}) + \frac{2M}3\sum_{k=n+1}^\infty  2^{-k} \deg(\gamma_{k-2}) \\
	&\leq \frac{4M}{\sqrt{7}} \biggl( 2^{-n} \Bigl(\frac{(1.9)^{n+1}-1}{.9}\Bigr) + \sum_{k=n+1}^\infty \Bigl(\frac{1.9}{2}\Bigr)^k\biggr)\\
	&\leq C_1 \sum_n^\infty (0.95)^n = C_2 (0.95)^n 
	\end{align*}
for some constants $C_1$, $C_2$. This converges to zero as $n\to\infty$.
\end{proof}

\begin{theorem}\label{thm:SpectralMeasureIsAllSupportedOnHighLevel}
There is a constant $C$ so for any $\epsilon>0$, if $m\geq C(1+|\log \epsilon|)$ then all but $\epsilon$ of the spectral mass of any $\Gamma_n$, $n\geq m$, is supported on eigenvalues of the Laplacian of $\Gamma_m$. In particular this is true for the KNS measure.
\end{theorem}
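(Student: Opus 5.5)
We bound the spectral mass of $\nu_n$ that is \emph{not} carried by eigenvalues of $\Delta(\Gamma_m)$, uniformly in $n\geq m$, and show it is at most $C'(0.95)^m$. Choosing $C$ so that $C'(0.95)^m\leq\epsilon$ whenever $m\geq C(1+|\log\epsilon|)$ then finishes the proof.

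\emph{Which eigenvalues lie in $\spec(\Delta(\Gamma_m))$.} By Lemma~\ref{lem:EigenvaluesPersist}, every eigenvalue born at a level $k\leq m$ (in either series) lies in $\spec(\Delta(\Gamma_m))$. By Lemma~\ref{lem:SpectralMeasureofGamma}, the mass of $\nu_n$ on eigenvalues outside $\spec(\Delta(\Gamma_m))$ is therefore at most the mass carried by eigenvalues born at levels $k$ with $m<k\leq n$.

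\emph{Splitting this mass into two pieces.}
\begin{enumerate}[(i)]
\item The $0$-series part is at most the total $0$-series mass $2^{-n}\sum_{k=0}^n\deg(\psi_k)$. By Lemma~\ref{lem:0SeriesSpectralMeasureContribution} this is at most $20(0.95)^n\leq 20(0.95)^m$.
\item The $2$-series part is $\sum_{k=m+1}^n 2^{-n}\sigma_{n-k}\deg(\gamma_{k-2})$. By Lemma~\ref{lem:2SeriesSpectralMeasureContribution}, $2^{-n}\sigma_{n-k}\leq \frac23 2^{-k}+\frac23 2^{-n}\leq \frac43 2^{-k}$ for $k\leq n$. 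Combined with the bound \eqref{eqn:0SeriesSpectralMeasureContribution2}, $\deg(\gamma_{k-2})\leq \frac6{\sqrt7}(1.9)^{k-2}$, this gives a bound of
\begin{equation*}
C''\sum_{k>m}(0.95)^k = C'''(0.95)^m .
\end{equation*}
\end{enumerate}
Both bounds are independent of $n$.

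\emph{Choosing the constant.} Adding (i) and (ii) gives $C'(0.95)^m$. Solving $C'(0.95)^m\leq \epsilon$ gives $m\geq (\log C'+|\log\epsilon|)/|\log 0.95|$, which is implied by $m\geq C(1+|\log\epsilon|)$ for a suitable $C$. (For $\epsilon\geq1$ the statement is trivial.)

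\emph{The KNS measure.} For $\mu$, use Theorem~\ref{thm:KNSSpectralMeasureofGamma} directly. The set $\spec(\Delta(\Gamma_m))$ contains all roots of $\gamma_{k-2}$ for $k\leq m$, so the remaining mass is $\frac23\sum_{k>m}2^{-k}\deg(\gamma_{k-2})$. This is the same tail as in (ii), and is again at most $C'(0.95)^m$. Alternatively, pass to the weak limit and use that the set $\spec(\Delta(\Gamma_m))$ is finite, hence closed.

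\emph{Main obstacle.} The only delicate point is the membership claim above: one must check that "supported on eigenvalues of $\Gamma_m$" is handled correctly when an eigenvalue born late coincides with one born early. Since the born-levels partition the spectrum, such coincidences cannot occur, so the argument reduces to pure counting.
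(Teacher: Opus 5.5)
Your proposal is correct and follows essentially the same argument as the paper. You restrict to eigenvalues born at levels $m<k\leq n$, bound the $2$-series part using Lemma~\ref{lem:2SeriesSpectralMeasureContribution} together with the crude degree estimate \eqref{eqn:0SeriesSpectralMeasureContribution2}, and sum a geometric series in $0.95$. The only differences are minor: you bound the $0$-series tail by the total $0$-series mass from Lemma~\ref{lem:0SeriesSpectralMeasureContribution} rather than termwise via $\deg(\psi_k)\leq\deg(\gamma_k)$, and you spell out the KNS case (directly from Theorem~\ref{thm:KNSSpectralMeasureofGamma}, or via the closed-set weak-limit inequality), which the paper leaves implicit.
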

\begin{proof}
The spectral mass of $\Delta(\Gamma_n)$ not supported on $\spec(\Delta(\Gamma_m))$ is 
\begin{align*}
\sum_{k=m+1}^n \bigl( 2^{-n}\sigma_{n-k}\deg(\gamma_{k-2}) + 2^{-n}\deg(\psi_k) \bigr)
	&\leq \sum_{k=m+1}^n \Bigl( \frac23 (2^{-k}+ 2^{-n}) \deg(\gamma_{k-2}) + 2^{-n} \deg(\gamma_{k}) \Bigr)\\
	&\leq \frac{12}{3\sqrt{7}} \sum_{k=m+1}^n\bigl( (2^{-k} + 2^{-n}) (1.9)^{k-2} + 2^{-n}  (1.9)^k\bigr)\\
	&\leq \frac{12}{\sqrt{7}} \sum_{k=m+1}^n  (0.95)^k  \leq C_1 (0.95)^m
	\end{align*}
where we used Lemma~\ref{lem:2SeriesSpectralMeasureContribution} to estimate $\sigma_{n-k}$ and  Corollary~\ref{cor:deg(psi_n)} and~\eqref{eqn:0SeriesSpectralMeasureContribution2} to estimate $\deg(\psi_k)$. 
\end{proof}

\begin{remark}\label{rem:dynamicaldegreecontrolsconvergence}
It should be noted that precise estimates on the rate of convergence of the spectral counting measures to the KNS measure may be obtained in terms of the roots $\rho_j$ from Corollary~\ref{cor:deg(psi_n)}, and could be estimated using the largest, $\rho_3$. The former is done explicitly for the $0$-series spectral mass in Lemma~\ref{lem:0SeriesSpectralMeasureContribution} and could be done in the proof of Theorem~\ref{thm:KNSSpectralMeasureofGamma} by inserting the formula for the degree of the $\gamma_{k-2}$ from Corollary~\ref{cor:deg(psi_n)} rather than estimating by~\eqref{eqn:0SeriesSpectralMeasureContribution2}.  Similarly, the value of the constant $C$ in Theorem~\ref{thm:SpectralMeasureIsAllSupportedOnHighLevel} could be estimated in terms of $\rho_3$.  Since $\rho_3$ is the dynamical degree (see Remark~\ref{rem:dynamicaldegree}) this says that the rate of convergence of the spectral counting measures to the KNS measure is controlled by the dynamical degree.
\end{remark}

In~\cite{brzoska} it is noted that the sequence of graphs $G_n$ does not exactly fit within the class in~\cite{GZ04} for which Grigorchuk and \.Zuk defined the KNS measure. It was argued there that since the weak limit of the spectral counting measures of the Dirichlet Laplacians on the $G_n$ had many of the same structural features as considered in~\cite{GZ04} this limit should still be considered a KNS measure, and that this KNS measure is given by (\cite{brzoska} Corollary~4.3)
\begin{equation}\label{eqn:brzKNSmsr}
	\chi=\sum_{k=1}^\infty\sum_{\{\lambda:\gamma_k\left(\lambda\right)=0\}}\frac{1}{6}\cdot2^{-k}\cdot\delta_\lambda
	\end{equation}

\begin{corollary}[Equivalence of KNS Spectral Measures]\label{cor:chi=Chi}
    The KNS measures for the sequences $\Delta(\Gamma_n)$ and the Dirichlet Laplacian $\Delta(G_n)$ are the same.
\end{corollary}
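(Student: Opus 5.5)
We need to show $\mu=\chi$, where by Theorem~\ref{thm:KNSSpectralMeasureofGamma}
\begin{equation*}
\mu=\frac23\sum_{k=3}^\infty 2^{-k}\sum_{\{\lambda:\gamma_{k-2}(\lambda)=0\}}\delta_\lambda ,
\end{equation*}
and $\chi$ is given by~\eqref{eqn:brzKNSmsr}. The plan is to reindex the sum for $\mu$ and compare coefficients of the point masses directly.

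First, substitute $j=k-2$ in the formula for $\mu$. Since $2^{-k}=\frac14 2^{-j}$, this gives
\begin{equation*}
\mu=\frac23\cdot\frac14\sum_{j=1}^\infty 2^{-j}\sum_{\{\lambda:\gamma_{j}(\lambda)=0\}}\delta_\lambda=\frac16\sum_{j=1}^\infty 2^{-j}\sum_{\{\lambda:\gamma_{j}(\lambda)=0\}}\delta_\lambda ,
\end{equation*}
which is term-by-term identical to~\eqref{eqn:brzKNSmsr}.

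One check is needed so that the coefficients can be read off as weights at points, namely that no $\lambda$ is counted twice. Each $\gamma_j$ has simple roots by Theorem~\ref{thm:Gnefntheorem}~\eqref{item:Gnefnsimple}. The root sets of distinct $\gamma_j$ are disjoint, because $\gamma_j$ consists exactly of the eigenvalues born at level $j$. The same convention is used in both formulas, so the identity holds as an identity of measures. Convergence of both series in total variation was already established, since $\deg\gamma_j\le \frac{6}{\sqrt7}(1.9)^j$ by~\eqref{eqn:0SeriesSpectralMeasureContribution2}.

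There is no substantial obstacle here. The only point requiring care is the index shift. The terms $k=0,1,2$, which correspond to $\gamma_{-2},\gamma_{-1},\gamma_0=1$, contribute nothing, so the reindexed sum starts at $j=1$, matching~\eqref{eqn:brzKNSmsr}. One should also confirm that the factor $\frac23\cdot\frac14=\frac16$ agrees exactly with the constant in $\chi$.
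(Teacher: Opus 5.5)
Your proof is correct and is essentially the paper's argument: the paper also just reindexes the sum, shifting the index by $2$ so that $\frac23 2^{-k}$ becomes $\frac16 2^{-j}$, and notes that the root sets at the bottom of the range are empty. Your extra remarks on disjointness of the root sets and on convergence are harmless but not needed, since the two expressions already agree term by term as sums of point masses.
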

\begin{proof}
This is simply a matter of changing variables in the summation~\ref{eqn:brzKNSmsr} and using that $\{\lambda:\gamma_k\left(\lambda\right)=0\}$ is empty when $k=0,-1,-2$ to see the result is the same as the formula in Theorem~\ref{thm:KNSSpectralMeasureofGamma}.
\end{proof}

\begin{figure}
\begin{centering}
\includegraphics[width=7cm]{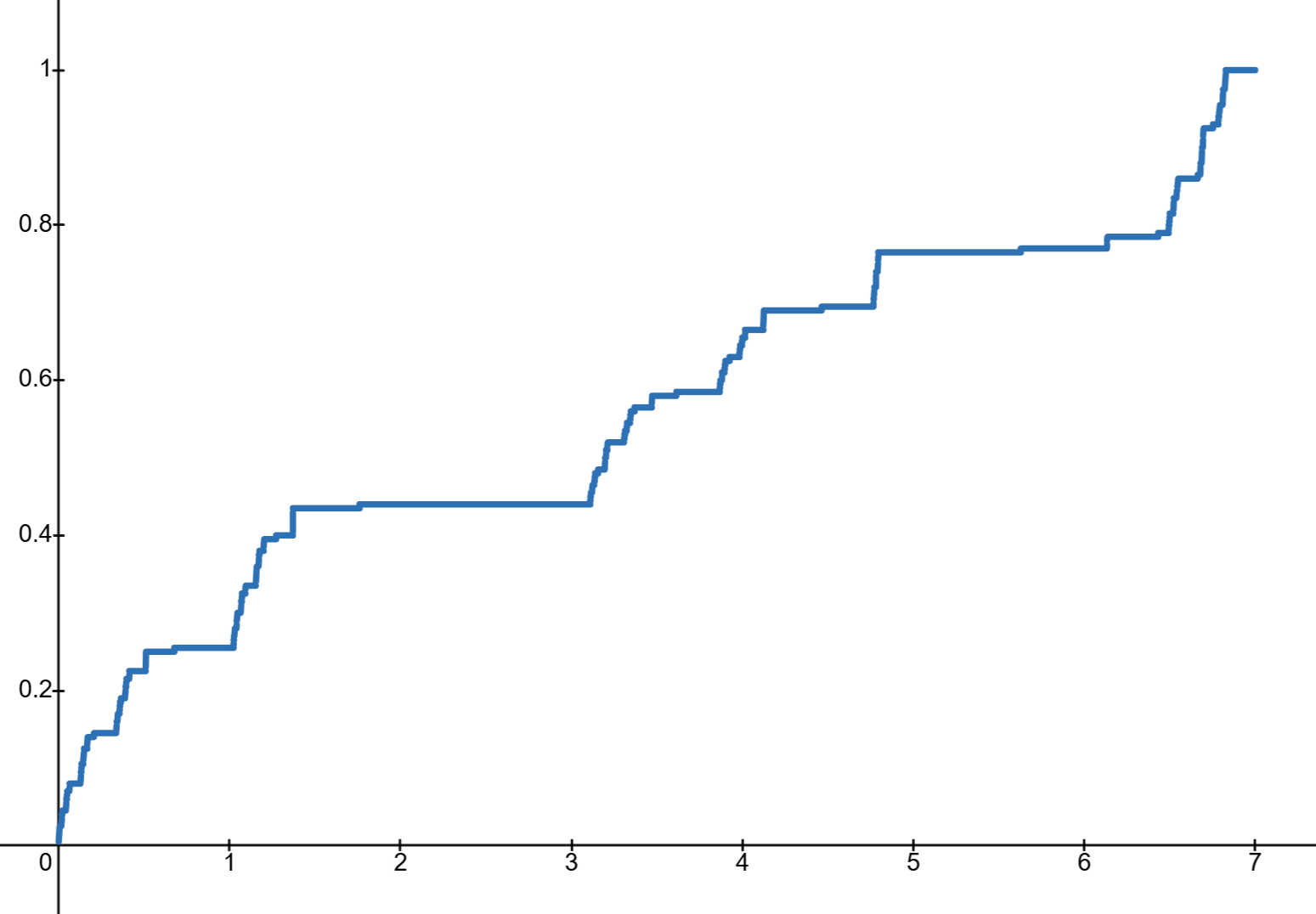}
\hspace{1.5cm}
\includegraphics[width=7cm]{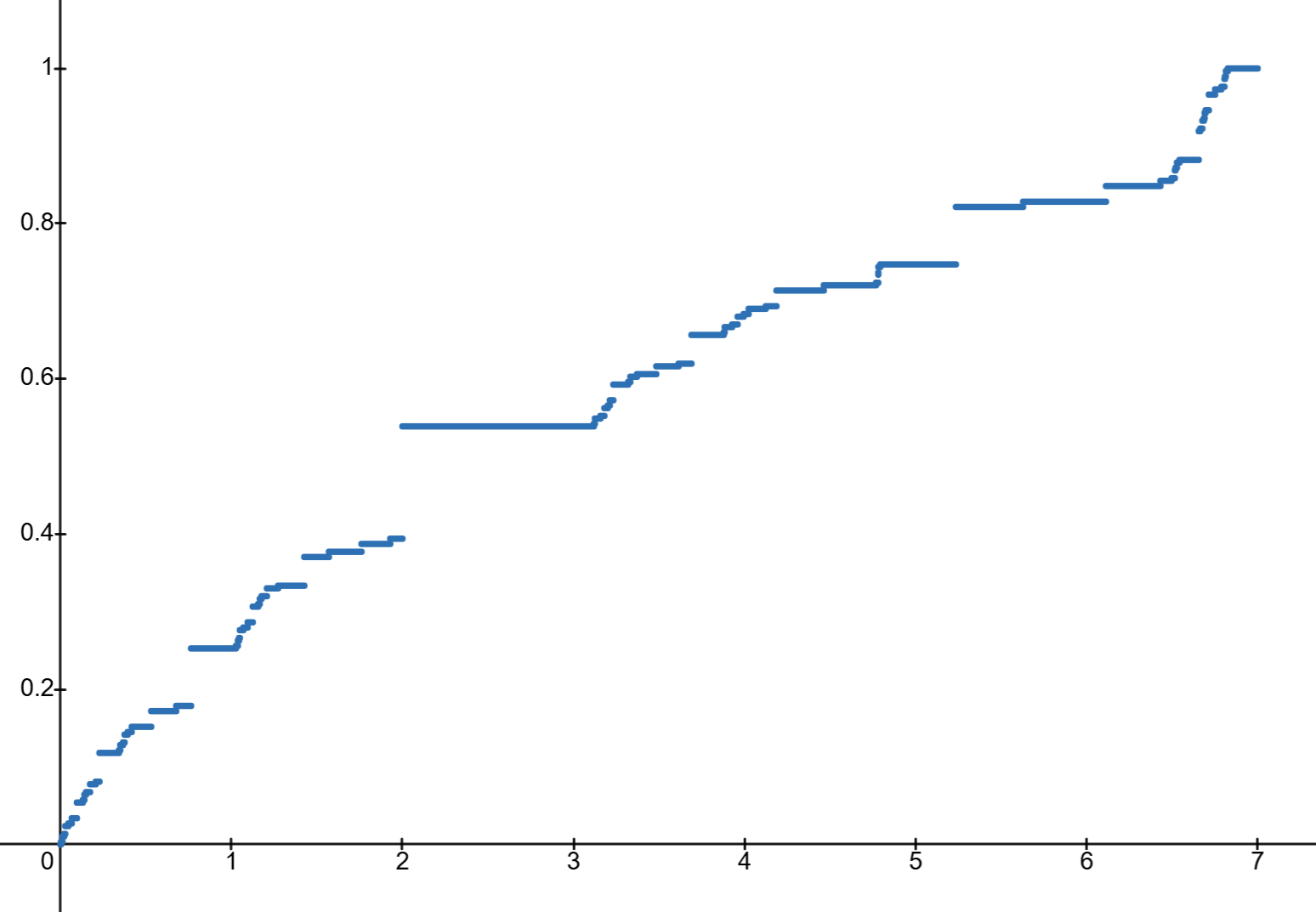}
\caption{Spectral counting functions for the $0$-series eigenvalues (left) and $2$-series eigenvalues (right) of  $\Delta(\Gamma_9)$.}
\label{fig:countingfunctions}
\end{centering}
\end{figure}

The KNS measure is purely atomic -- it is a countable union of point mass measures.  However, there are a number of standard ways to further elucidate its structure.  One comes from examining the cumulative distribution functions for the measures $\nu_n$. These are usually called spectral counting functions, as the value at $x$ counts the number of eigenvalues less than or equal to $x$.  In Figure~\ref{fig:countingfunctions} we show graphs of the spectral counting functions for both the $0$-series and $2$-series eigenvalues of $\Delta(\Gamma_n)$ with $n=9$.  We have obtained these graphs for other values of $n$, but by $n=9$ the changes with increasing $n$ are no longer visible.

A feature that is easily seen in these graphs is the existence of spectral gaps, which are simply intervals that contain no eigenvalues. They show as horizontal parts of the graph where the spectral counting function is constant. It is trivial that such exist for finite $n$, but it appears that they persist as $n\to\infty$.  This commonly occurs when the limiting spectral measure is invariant under a dynamical system, as it is usual that then the eigenvalues accumulate to the Julia set and the gaps are the Fatou components.

Another visible feature is the jumps at $2$-series eigenvalues. This is a consequence of  Lemma~\ref{lem:SpectralMeasureofGamma}, which establishes that a positive proportion of the spectral mass occurs at each $2$-series eigenvalue, and in fact that in the limit $n\to\infty$ this proportion depends only on the level of birth.

Yet a further feature of the graphs is that at each $2$-series eigenvalue there appears to be a spectral gap on one side of the eigenvalue and a sequence of further eigenvalues that converge on the other side of the eigenvalue; perhaps the spectral counting function is even continuous on one side. We also notice that the spectral counting function for the $0$-series eigenvalues may converge to a continuous function as $n\to\infty$. This is consistent with the fact that the $0$-series eigenvalues are simple (Lemma~\ref{lem:0seriessimple}) but the number of $0$-series eigenvalues has power-law growth, as established in Corollary~\ref{cor:deg(psi_n)}, but a proof would require information about the distribution of these eigenvalues, so as to ensure there is no point where a positive proportion of their spectral counting measure accumulates.

\begin{figure}
\begin{centering}
\includegraphics[width=7cm]{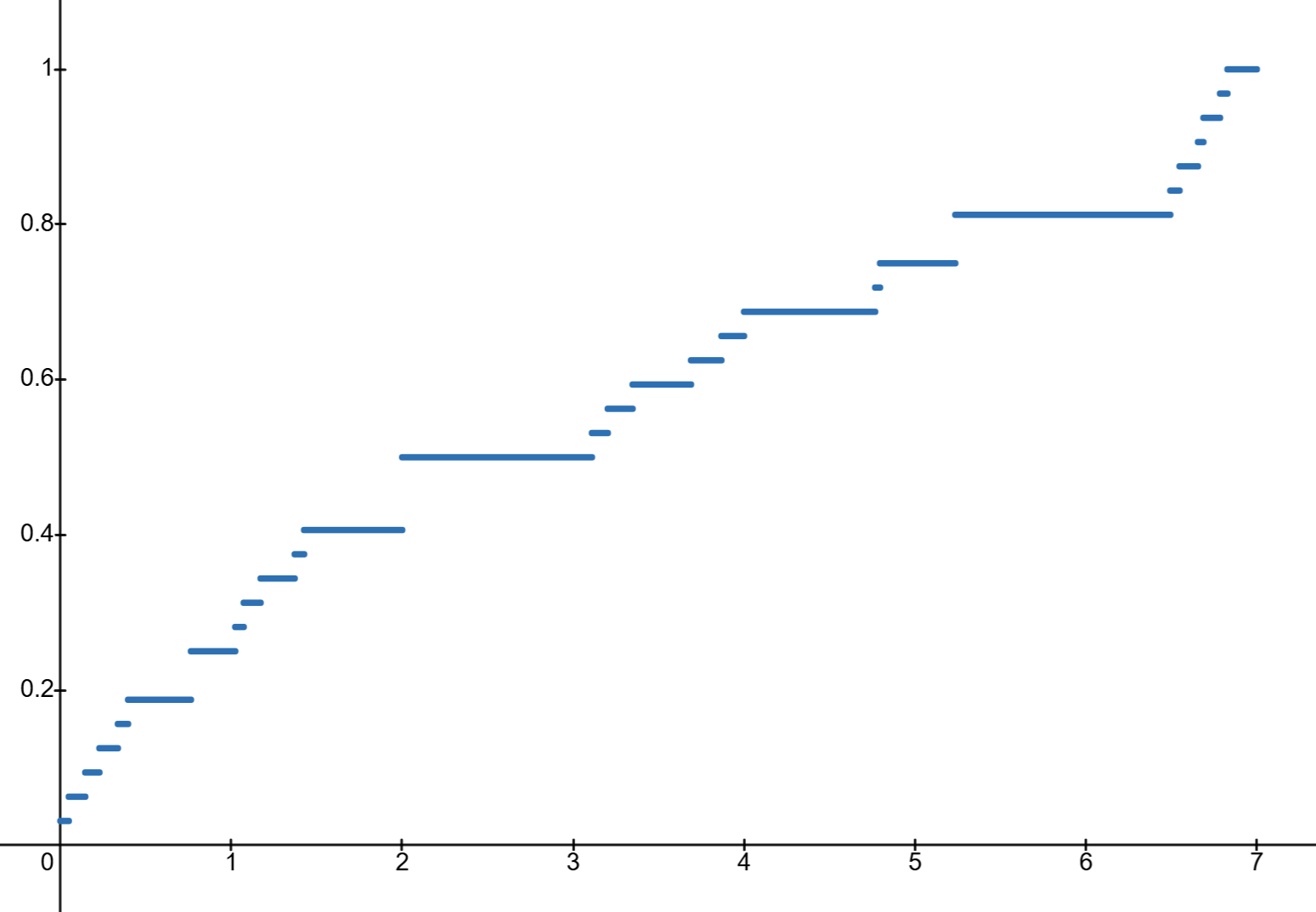}
\hspace{1.5cm}
\includegraphics[width=7cm]{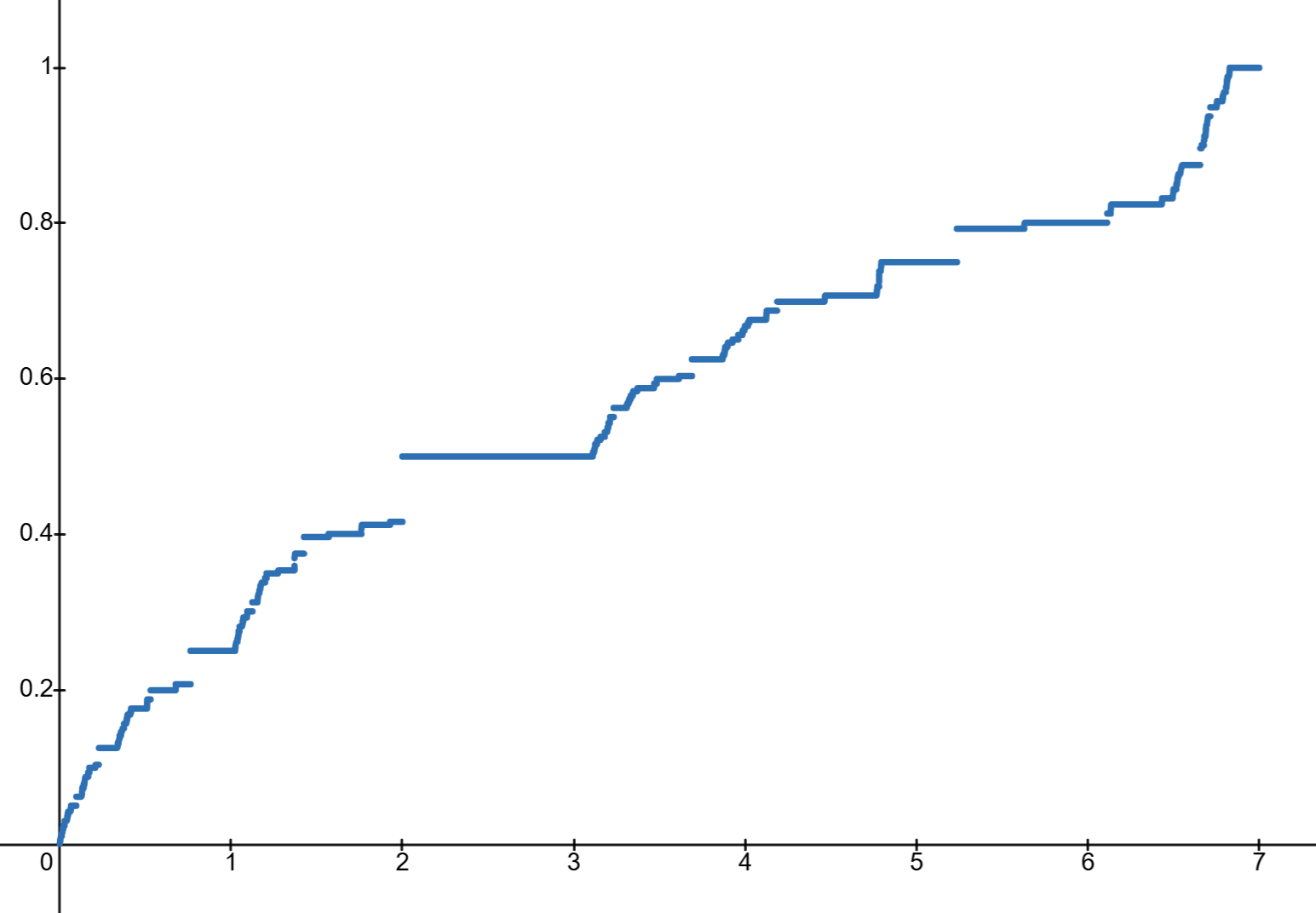}
\caption{Spectral counting function for $\Delta(\Gamma_n)$  with $n=5$ (left) and $n=9$ (right).}
\label{fig:bothseriescounting}
\end{centering}
\end{figure}

These three features are also visible on the graphs of the spectral counting function for all eigenvalues of $\Delta(\Gamma_n)$, which is shown for $n=5$ and $n=9$ in Figure~\ref{fig:bothseriescounting}. At $n=5$ the graphs are still visibly changing with increasing $n$, while from $n=9$ onward there is little visible change.  The fact that there are jumps at $2$-series eigenvalues is an immediate consequence of the formula in Theorem~\ref{thm:KNSSpectralMeasureofGamma}.  The existence of gaps in the $2$-series spectrum and that every $2$-series eigenvalue is a one-sided limit point of the set of all $2$-series eigenvalues was proved in~\cite{brzoska}. As a consequence of this and Theorem~\ref{thm:SpectralMeasureIsAllSupportedOnHighLevel} we have that the support of the KNS measure, which is the closure of the set $\cup_n \Dirspec(\Delta(G_n))$, is a Cantor set. We understand that it follows from the work of Dang, Grigorchuk and Lyubich that this Cantor set has positive Lebesgue measure, though the proof has not yet appeared.

\begin{corollary}\label{cor:CantorArgument}
The support of the KNS spectrum of \(\Delta\left(\Gamma_n\right)\) is a Cantor set. In particular it is uncountable and has countably many gaps.
\end{corollary}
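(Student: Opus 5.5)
The plan is to identify the support of $\mu$ concretely and then check the three defining properties of a Cantor set: closed, perfect, totally disconnected (nowhere dense in $\mathbb{R}$). By Theorem~\ref{thm:KNSSpectralMeasureofGamma}, $\mu$ is a sum of point masses with strictly positive weight $\frac23 2^{-k}$ at every root of $\gamma_{k-2}$, $k\geq3$. Hence $\operatorname{supp}\mu$ is the closure $K$ of $\bigcup_{k\geq1}\{\lambda:\gamma_k(\lambda)=0\}$. By Theorem~\ref{thm:Gnefntheorem} and Definition~\ref{def:GammaPoly}, this union equals $\bigcup_n\Dirspec(\Delta(G_n))$, since every Dirichlet eigenvalue is born at some level and is then a root of the corresponding $\gamma_m$. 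Equivalently, by Corollary~\ref{cor:chi=Chi}, $K$ is the support of the measure $\chi$ from~\cite{brzoska}. As a closure, $K$ is closed, and it is bounded, lying in $[0,8]$, so it is compact.

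For perfectness I would invoke the result of~\cite{brzoska} quoted just before the statement: every $2$-series eigenvalue, that is every root of some $\gamma_k$, is a one-sided limit of other roots of the $\gamma_j$. So every point of the dense set $\bigcup_k\{\gamma_k=0\}$ is a non-isolated point of $K$. A point of $K$ outside that set is by definition a limit of distinct points of the set. Hence $K$ has no isolated points.

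For total disconnectedness I would use the gap statement from~\cite{brzoska}. At each root of $\gamma_k$ there is an open interval on one side free of all Dirichlet eigenvalues, and hence free of $K$. Together with the one-sided accumulation, these gaps are dense in $[0,8]$. Suppose $K$ contained a nondegenerate interval $I$. Then $I$ would contain a root $\lambda_0$ of some $\gamma_k$ in its interior, because roots are dense in $K\supseteq I$. The gap adjacent to $\lambda_0$ would then meet $I$, which is a contradiction. So $K$ contains no intervals, and a compact subset of $\mathbb{R}$ with empty interior is totally disconnected.

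A nonempty, compact, perfect, totally disconnected subset of $\mathbb{R}$ is a Cantor set, so $K$ is uncountable. The complement $[0,8]\setminus K$, together with the two unbounded pieces of $\mathbb{R}\setminus K$, is an open set. It is therefore a countable union of disjoint open intervals, the gaps, and there are infinitely many of them because $K$ is perfect with empty interior. The main obstacle is making sure the cited gap and one-sided accumulation results from~\cite{brzoska} apply to every root of every $\gamma_k$, and that each gap is genuinely adjacent to its root. If only "gaps exist near each root" is available, I would instead argue with the dynamics of Theorem~\ref{thm:brzrecursion}: show that the preimage structure under the recursion produces gaps in every interval. Theorem~\ref{thm:SpectralMeasureIsAllSupportedOnHighLevel} can be cited as well, to confirm that no mass outside $K$ arises from the $0$-series.
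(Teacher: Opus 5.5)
Your proposal is correct and follows the paper's route. The paper identifies the support of $\mu$ with the closure of $\bigcup_n\Dirspec(\Delta(G_n))$ and then invokes the results of~\cite{brzoska} on gaps in the $2$-series spectrum, and on every $2$-series eigenvalue being a one-sided limit point of the $2$-series eigenvalues, which is exactly the gap-adjacent-to-each-root and accumulation input you use for nowhere density and perfectness. Your write-up supplies the point-set details the paper leaves implicit: the support of an atomic measure with positive weights, density of the roots in $K$, the topological characterization of the Cantor set, and countability of the complementary intervals.
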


However, there is another spectral set that naturally arises in the study of $\Gamma_n$, namely the closure of $\cup_n \spec(\Delta(\Gamma_n))$. A priori this could be a larger set than the support of the KNS spectrum, as the former is the closure of all eigenvalues and the latter the closure of just the $2$-series eigenvalues. However, the dynamical description obtained in this paper allows us to prove that these sets are the same.

\begin{theorem}\label{thm:accumsetofevals}
The closure of the set of eigenvalues $\cup_n \spec(\Delta(\Gamma_n))$ is the support of the KNS measure, so is a Cantor set.
\end{theorem}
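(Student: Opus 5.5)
Write $J$ for the support of the KNS measure, which by Theorem~\ref{thm:KNSSpectralMeasureofGamma} is the closure of the set of $2$-series eigenvalues, i.e.\ of the roots of all $\gamma_k$. Since every eigenvalue of every $\Delta(\Gamma_n)$ is in the $0$-series or the $2$-series (Lemma~\ref{lem:NoSingle0}), the inclusion $J\subseteq\overline{\cup_n\spec(\Delta(\Gamma_n))}$ is immediate. For the reverse inclusion it therefore suffices to show that every $0$-series eigenvalue $\tilde\lambda$ lies in $J$. Once that is done, Corollary~\ref{cor:CantorArgument} shows the closure is a Cantor set.

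\textbf{Step 1: reduce to a statement about the dynamics.} By Corollary~\ref{cor:0seriesfromzeta=2}, for each $0$-series eigenvalue $\tilde\lambda$ we have $\zeta_k(\tilde\lambda)=2$ for all $k\geq m$, and $\zeta_{m-1}(\tilde\lambda)=-2$ except when $n=2$. The claim to prove is that this forces $\tilde\lambda$ to be a limit of roots of $\gamma_k$, which are the zeros of the $\zeta_k$. I will use the recursion~\eqref{eqn:brzrecursion}, viewed as the map $(x,y)\mapsto\bigl(y,\,2+(1+\tfrac2y)(x^2-4)\bigr)$ acting on $(\zeta_{k-2},\zeta_{k-1})$. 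Along the orbit of $\tilde\lambda$ this map sits at the fixed point $(2,2)$ after a preimage step through $(-2,2)$ or $(2,-2)$.

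\textbf{Step 2: perturb $\lambda$ and show zeros appear nearby.} Set $\lambda=\tilde\lambda+\epsilon$ and linearize. Write $\zeta_k=2+\delta_k$. Near the fixed point the recursion gives
\[
\delta_k\approx\Bigl(1+\tfrac{2}{2}\Bigr)\cdot 4\,\delta_{k-2}=8\,\delta_{k-2},
\]
so the perturbation grows like $8^{k/2}$, and the fixed point $(2,2)$ is repelling. I need $\delta_k$ to depend nontrivially on $\epsilon$ at some level. The proof of Corollary~\ref{cor:0seriesfromzeta=2} shows that $\zeta_{n+2}-2$ has a zero of finite order at $\tilde\lambda$, with the order rising by one each two steps in the relevant regime. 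Hence $\delta_k(\tilde\lambda+\epsilon)\sim a_k\epsilon^{j_k}$ with $a_k\neq0$. Since $|a_k|\to\infty$ exponentially, for any neighbourhood $U$ of $\tilde\lambda$ the rational functions $\zeta_k$ on $U$ take values far from $2$, and in particular the value $0$, for large $k$. To make this rigorous I would fix a small disc $U$ around $\tilde\lambda$ containing no poles of the relevant $\zeta_k$; poles are $2$-series eigenvalues, and if such eigenvalues accumulate at $\tilde\lambda$ we are already done. On $U$ the functions $\zeta_k$ are holomorphic and do not form a normal family, because they have a repelling behaviour at $\tilde\lambda$ while $\zeta_k(\tilde\lambda)=2$ stays bounded. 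A Montel-type argument, using that the family omitting the values $0$ and $\infty$ on $U$ would be normal up to rescaling, then gives a zero of some $\zeta_k$ in $U$. The real-rootedness of the $\gamma_k$ keeps this zero real, and it is a root of $\gamma_k$, so it lies in $J$.

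\textbf{Main obstacle.} The delicate point is this non-normality/Montel step. A family omitting two values is normal, so if no $\zeta_k$ had a zero or pole in $U$ the family $\{\zeta_k\}$ would be normal on $U$. That contradicts $\zeta_k(\tilde\lambda)=2$ together with $\zeta_k'(\tilde\lambda)$, or the first nonvanishing derivative, growing without bound as computed from the linearization. I expect the bookkeeping of the derivative growth to be the hardest part, because of the order of vanishing and the exceptional step at $\zeta_{m-1}=-2$, where $1+2/\zeta$ vanishes. The cases $n=0,1,2$ would be checked directly from the explicit values in Corollary~\ref{cor:0seriesfromzeta=2}.
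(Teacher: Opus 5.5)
Your reduction is right: $J\subseteq\overline{\cup_n\spec(\Delta(\Gamma_n))}$ is immediate, and it suffices to show that each $0$-series eigenvalue $\tilde\lambda$ is a limit of zeros or poles of the $\zeta_k$. The gap is the Montel step in Step~2.

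\textbf{The Montel step fails.} Montel's theorem needs three omitted values of the Riemann sphere. A family of holomorphic functions that omits only $0$ and $\infty$ need not be normal. For instance, $f_k(\lambda)=2\exp\bigl(8^{k/2}(\lambda-\tilde\lambda)\bigr)$ has the following properties:
\begin{enumerate}[(i)]
\item it has no zeros or poles and is positive on the real line;
\item it satisfies $f_k(\tilde\lambda)=2$;
\item its derivative at $\tilde\lambda$ grows like $8^{k/2}$;
\item it is not normal on any disc about $\tilde\lambda$.
\end{enumerate}
So everything you extract from the linearization is consistent with the $\zeta_k$ having no zeros or poles near $\tilde\lambda$. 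That means the fixed value $2$ at $\tilde\lambda$, the exponentially growing leading coefficient of $\zeta_k-2$, and the resulting non-normality give no contradiction. The phrase ``normal up to rescaling'' does not supply a third omitted value. Nor can the linearization $\delta_{k+2}\approx 8\delta_k$ push $\zeta_k$ down to $0$ by itself, since it is only valid while $\delta_k$ is small.

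A smaller point: after the birth step, the order of vanishing of $\zeta_k-2$ at $\tilde\lambda$ is constant on each parity class rather than rising. The extra order appears only once, from the factor $1+2/\zeta_{n+1}$ at the step where $\zeta_{n+1}(\tilde\lambda)=-2$.

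\textbf{What is missing} is a global, not linearized, use of the recursion. The paper gets this from the real line, as follows.
\begin{enumerate}[(1)]
\item Suppose a real interval $I\ni\tilde\lambda$ contained no zeros or poles of $\zeta_k$ for $k\ge m$. Then each $\zeta_k$ is continuous on $I$, and since $\zeta_k(\tilde\lambda)=2$, it is positive there. Hence $1+2/\zeta_{k+1}>1$ on $I$.
\item At any $\lambda\in I$ with $\zeta_k(\lambda)=2-\delta$ and $0<\delta<2$, one has $\zeta_k^2-4=-\delta(4-\delta)<-2\delta$. Therefore $\zeta_{k+2}(\lambda)<2-2\delta$.
\item The deficit at least doubles every two steps, so eventually $\zeta_{k+2j}(\lambda)<0$. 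This contradicts positivity.
\item A starting point with $\zeta_m(\lambda)<2$ exists because $\zeta_m-2$ has a \emph{simple} zero at $\tilde\lambda$, so it changes sign. Simplicity comes from the simplicity of $0$-series eigenvalues when the birth level is at least $3$, and from a direct check for births at levels $0,1,2$.
\end{enumerate}
This argument replaces your normal-families step entirely. To keep a complex-analytic proof, you would have to find a genuine third omitted value, and the local data you computed does not provide one.
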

\begin{proof}
Evidently it suffices to prove that every $0$-series eigenvalue is a limit of $2$-series eigenvalues. Fix some $0$-series eigenvalue $\tilde\lambda$.  From Corollary~\ref{cor:0seriesfromzeta=2} we know there is $m$ so that $\zeta_n(\tilde\lambda)=2$ for all $n\geq m$. Taking $m$ to be as small as possible we also know $m\geq 2$ and that $\zeta_{m-1}(\tilde\lambda)=-2$ unless $m=2$.  Also recall that the roots and poles of the functions $\zeta_n$ occur exactly at $2$-series eigenvalues.  We proceed in the manner of the proof of Lemma~5.2 of~\cite{brzoska}, showing by contradiction that any sufficiently small interval around $\tilde\lambda$ must contain a root or pole of some $\zeta_n$.

Suppose there is an interval $I\ni\tilde\lambda$ containing no roots or poles of any $\zeta_n$, $n\geq m$. Then, since each $\zeta_n(\tilde\lambda)=2$, we have that each $\zeta_n$ is continuous and positive on $I$. The recursion from Theorem~\ref{thm:brzrecursion},
\begin{equation*}
	\zeta_{n+2}-2 = \biggl(1+\frac2{\zeta_{n+1}}\biggr)(\zeta_n^2-4),
	\end{equation*}
is valid for $n\geq m$ because $m\geq2$, and our preceding reasoning says the first factor on the right is greater than $1$.  We check that if we can find $n\geq m$ and $\lambda$ such that $\zeta_n(\lambda)=2-\delta_n<2$ then we arrive at a contradiction.  The reason is that from $\zeta_n>0$ we have $\delta_n<2$ and thus $\zeta_n^2-4<-2\delta_n$, so that $\zeta_{n+2}<2-2\delta_n$.  Iteration then produces some $j$ so $\delta_{n+2j}(\lambda)<0$, in contradiction to all such being positive on $I$.

It remains to verify that such a $\lambda$ exists, which we do for $n=m$. In essence this is because the $0$-series eigenvalues are simple. In the proof of  Corollary~\ref{cor:0seriesfromzeta=2} this fact was used to see that $\zeta_{n+2}-2$ has a simple zero at $\tilde\lambda$ when the level of birth  $n$ is at least $3$, and in this case $m=n+2$.  For $0$-series eigenvalues born at levels $0,1,2$ we just check directly that the corresponding functions $\zeta_m-2$, which are $\zeta_3-2$, $\zeta_4-2$ and $\zeta_2-2$ respectively, also have a simple zero at the eigenvalue $\tilde\lambda$.  Evidently, the presence of a simple zero at $\tilde\lambda\in I$ when $I$ is open implies there is $\lambda\in I$ with $\zeta_m(\lambda)<2$, at which point the above reasoning guarantees that $I$ also contains a zero or pole of $\zeta_{m+2j}$ for some sufficiently large $j$.	
\end{proof}

A further observation from the graphs of the spectral counting functions has to do with ``gap labels'' for the gaps in the spectrum. The gap label for a particular gap interval is simply the value of the counting function on the interval, which is just the mass of the spectral counting measure that lies to the left of the gap. In the spectral counting graphs shown in Figure~\ref{fig:bothseriescounting} it appears that the gap labels are dyadic: they have the form $j2^{-k}$.  It is often possible to identify the gap labels of spectra invariant under a dynamical system, but here the fact that our dynamics is for the points where $\zeta_n=0$ and $\zeta_n=2$ makes this approach difficult.

\begin{conjecture}
The gap labels for the KNS measure are dyadic rationals.
\end{conjecture}

\providecommand{\bysame}{\leavevmode\hbox to3em{\hrulefill}\thinspace}
\providecommand{\MR}{\relax\ifhmode\unskip\space\fi MR }
\providecommand{\MRhref}[2]{%
  \href{http://www.ams.org/mathscinet-getitem?mr=#1}{#2}
}
\providecommand{\href}[2]{#2}

\end{document}